\def\prob {{\rm Prob}}
\newtheorem{theorem}{Theorem}[section]
\newtheorem{lemma}{Lemma}[section]
\newtheorem{proposition}{Proposition}[section]
\newtheorem{example}{Example}[section]
\newtheorem{remark}{Remark}[section]
\newtheorem{definition}{Definition}[section]
\newtheorem{assumption}{Assumption}[section]
\newcommand{\setd}{{ d \kern -.15em l}}
\newcommand{\hatsetd}{ d \hat{\kern -.15em l }}
\DeclareMathOperator*{\argmax}{arg\,max}
\DeclareMathOperator*{\argmin}{arg\,min}
\newcommand{\y}{{ y}}
\newcommand{\bgeqn}{\begin{eqnarray}}
\newcommand{\edeqn}{\end{eqnarray}}
\newcommand{\bgeq}{\begin{eqnarray*}}
\newcommand{\edeq}{\end{eqnarray*}}
\newcommand{\bec}{\begin{center}}
\newcommand{\enc}{\end{center}}
\newcommand{\inmat}[1]{\mbox{\rm {#1}}}
\numberwithin{equation}{section}
\newcommand{\F}{{\cal F}}
\newcommand{\B}{{\cal B}}
\newcommand{\be}{\begin{equation}}
\newcommand{\ee}{\end{equation}}
\def\dist{\mathop{\rm dist}}
\def\w{\omega}
\def\dom{{\rm dom}}
\def\bbe{{\Bbb{E}}} 
\title{Solutions of Two-stage  Stochastic Minimax Problems}
\author{Hailin Sun\footnote{School of Mathematical Sciences,  Nanjing Normal University, Nanjing, 210023, China, 
             {hlsun@njnu.edu.cn} }         \and
        Xiaojun Chen \footnote{  Corresponding author. Department of Applied Mathematics, The Hong Kong Polytechnic University, Hong Kong, China,  {xiaojun.chen@polyu.edu.hk}}
}
\date{\today}
\begin{document}

\maketitle

\begin{abstract}
This paper introduces a class of two-stage stochastic minimax problems where the first-stage objective function is nonconvex-concave while the second-stage objective function is strongly convex-concave. We establish properties of the second-stage minimax value function and solution functions, and characterize the existence and relationships among saddle points, minimax points, and KKT points.  We apply the sample average approximation (SAA) to the class of two-stage stochastic minimax problems and prove the convergence of the KKT points as the sample size tends to infinity. An inexact parallel proximal gradient descent ascent  algorithm is proposed to solve this class of problems with the SAA. Numerical experiments demonstrate the effectiveness of the proposed algorithm and
validate the convergence properties of the SAA approach.
\end{abstract}

{\bf Keywords:} {Two-stage stochastic minimax problem;  Nonconvex-nonsmooth;  Saddle point;  Sample average approximation;  Proximal gradient method}

{\bf Mathematics Subject Classification (2020)} {90C15;  49K35;  90C47}

\section{Introduction}
\label{intro}
In this paper, we consider the following two-stage stochastic minimax problem
\begin{equation}\label{eq:ts-minimax}
\min_{x_1\in X_1} \max_{y_1\in Y_1} \;  \psi(x_1, y_1): =F_1(x_1, y_1) + \bbe\left[\psi_2(x_1, y_1, \xi)\right],
\end{equation}
where
\begin{equation}\label{eq:st-minimax}
\psi_2(x_1, y_1, \xi):=\min_{x_2\in X_2(x_1, \xi)}\max_{y_2\in Y_2(y_1, \xi)}F_2(x_2, y_2, \xi),
\end{equation}
$\mathbb{E}$ denotes the expectation, $\xi:\Omega\to\Xi\subset\mathbb{R}^l$ is a random variable defined on the probability space $(\Omega, \F, P)$, $X_1\subset\mathbb{R}^{n_1}$, $Y_1\subset\mathbb{R}^{m_1}$ are  convex compact sets, $F_1:\mathbb{R}^{n_1}\times\mathbb{R}^{m_1}\to\mathbb{R}$, $F_2:\mathbb{R}^{n_2}\times\mathbb{R}^{m_2}\times\mathbb{R}^{l}\to\mathbb{R}$,
$$
X_2(x_1, \xi):=\{x_2\in \mathbb{R}^{n_2} : T(\xi)x_1+W(\xi)x_2\leq h(\xi) \},
$$
$$
Y_2(y_1, \xi):=\{y_2\in  \mathbb{R}^{m_2} : A(\xi)y_1+B(\xi)y_2\leq c(\xi) \},
$$
and $T(\xi)\in\mathbb{R}^{l_2\times n_1}$, $W(\xi)\in\mathbb{R}^{l_2\times n_2}$, $A(\xi)\in\mathbb{R}^{s_2\times m_1}$, $B(\xi)\in\mathbb{R}^{s_2\times m_2}$, $h(\xi)\in\mathbb{R}^{l_2}, c(\xi)\in\mathbb{R}^{s_2}$ for all $\xi\in\Xi$.
The objective functions in \eqref{eq:ts-minimax} and \eqref{eq:st-minimax} have the following structure.
\begin{itemize}
\item $F_1(x_1, y_1):= f(x_1) + \psi_1(x_1, y_1) - g(y_1)$, where
 $f:\mathbb{R}^{n_1}\to\mathbb{R}$ is a proper and lower semicontinuous (lsc) function,
 $\psi_1:\mathbb{R}^{n_1}\times\mathbb{R}^{m_1}\to\mathbb{R}$ is a Lipschitz continuously differentiable function,
 $g:\mathbb{R}^{m_1}\to\mathbb{R}$ is a proper and lsc function, and for every $x_1\in X_1$, $g(\cdot)- \psi_1(x_1, \cdot)$ is $\sigma$-strongly convex with $\sigma>0$.

    \item
    For almost every (a.e.) $\xi\in\Xi$, $F_2(\cdot, \cdot, \xi)$ is a $\sigma$-strongly convex-strongly concave function.
\end{itemize}

The two-stage stochastic minimax problem  \eqref{eq:ts-minimax}-\eqref{eq:st-minimax} represents an extension of the classical two-stage stochastic minimization model originally developed by Dantzig \cite{dantzig1955linear} and Beale \cite{beale1955minimizing}.
Two-stage stochastic minimization problem \cite{birge1997introduction,shapiro2021lectures} is a fundamental framework for sequential decision-making under uncertainty, where first-stage ``here-and-now" decisions (e.g., infrastructure investments) must be made before observing random outcomes, followed by second-stage ``wait-and-see" recourse actions (e.g., operational adjustments) that adapt to the realized uncertainty. Due to its modeling power, the two-stage stochastic minimization problem has been widely used in many important engineering and scientific applications, such as wireless resource optimization \cite{liu2021two}, transportation network design \cite{liu2009two}, and machine/deep learning \cite{lee2018resource}.

In contrast to stochastic optimization's expectation-based framework, the minimax approach explicitly considers worst-case scenarios to achieve robust solutions. The study of minimax problems can be traced back to von Neumann's seminal work \cite{v1928theorie} in 1928  on convex-concave deterministic minimax problems.
In recent years, nonconvex-nonsmooth minimax problems \cite{bian2024nonsmooth,cohen2025alternating,jiang2023optimality,jin2020local,xu2023unified} and stochastic minimax problems \cite{chen2024near,lan2023novel,shapiro2002minimax} have garnered significant attention due to their applications in data science, machine learning, game theory, and robust decision-making.  Jin et al.  \cite{jin2020local} gave the definitions of global minimax points and local minimax points by considering the minimax problem as a two-player sequential game.
Cohen and  Teboulle \cite{cohen2025alternating} analyzed proximal gradient methods for nonconvex and strongly concave minimax problems. 
Chen and Luo \cite{chen2024near} proposed a recursive anchored iteration method for smooth minimax problems and proved that their algorithm achieves near-stationarity.

To the best of our knowledge,  the two-stage stochastic minimax problem has not been investigated, although two-stage stochastic minimization and minimax problem have been extensively studied as two separate classes of mathematical models.
However, many practical scenarios involve stochastic environments that combine both risk-neutral and risk-averse components, requiring decision-makers to simultaneously address:
(i) sequential decision-making,
(ii) stochastic uncertainties, and
(iii) adversarial considerations (extreme events or opponent behaviors).
This constitutes the primary motivation for studying two-stage stochastic minimax problems (see Example \ref{ex:1}).

Another motivation for studying the two-stage stochastic minimax problem \eqref{eq:ts-minimax}-\eqref{eq:st-minimax} lies in its capacity to model two-stage stochastic two-player zero-sum games (see Example \ref{ex:2}), which constitute a specialized subclass of two-stage stochastic Nash equilibrium problems (SNEPs).
Pang et al. \cite{pang2017two} examined two-stage SNEPs involving risk-averse players under uncertainty, developing an iterative best-response solution framework. Zhang et al. \cite{zhang2019two} subsequently investigated a mixed non-cooperative game formulation for two-stage decision-making in uncertain environments. Further advancing this line of research, Lei et al. \cite{lei2020synchronous} introduced synchronous, asynchronous, and randomized best-response schemes for SNEPs, with specific applications to two-stage SNEPs featuring both linear and quadratic recourse structures.

The KKT conditions for two-stage SNEPs can be characterized through two-stage stochastic variational inequalities (SVIs).  Rockafellar and Wets \cite{rockafellar2017stochastic} and Chen et al. \cite{chen2017two} extended SVIs from  single-stage to multi-stage and  two-stage SVIs, respectively. Rockafellar and Sun \cite{rockafellar2019solving} studied the progressive hedging algorithm (PHA) for solving multi-stage SVIs when the random variable follows a discrete distribution. Discrete approximation methods have been proposed to approximate two-stage SVIs  \cite{chen2019discrete,chen2019convergence}, allowing the approximated SVIs to be solved using PHA. In addition to PHA, dynamic stochastic approximation-type algorithms effectively solve  two-stage SVIs \cite{chen2022stochastic}.

 However,  most existing research on two-stage SNEPs and two-stage SVIs has been confined to smooth and monotone cases, leaving further developments for nonsmooth and nonconvex problems  unexplored.



The main contributions of this paper are summarized as follows.

\begin{itemize}

\item We introduce a  two-stage stochastic minimax model \eqref{eq:ts-minimax}-\eqref{eq:st-minimax} and investigate the properties of the second-stage minimax value functions and solution functions. Based on these properties, we examine the existence and relationships among saddle points, minimax points, and KKT points for the nonconvex-nonsmooth two-stage stochastic minimax problem \eqref{eq:ts-minimax}-\eqref{eq:st-minimax}.

\item We apply the sample average approximation (SAA) method to problem \eqref{eq:ts-minimax}-\eqref{eq:st-minimax} and  prove that the divergence between the KKT point sets of the SAA problem and the true problem \eqref{eq:ts-minimax}-\eqref{eq:st-minimax} converges to zero almost surely.

\item We propose an Inexact Parallel Proximal Gradient Descent Ascent (IPPGDA) algorithm for solving problem \eqref{eq:ts-minimax}-\eqref{eq:st-minimax}, with both subsequence and global convergence analyses. Unlike the single-stage minimax problems solved by the parallel proximal gradient descent-ascent (PPGDA) algorithm in \cite{cohen2025alternating}, the inexactness in our approach arises not only from solving the inner maximization problem but also from solving the second-stage minimax problem.

\end{itemize}

The remainder of this paper is organized as follows. Section 2 introduces the motivating examples, necessary assumptions, and fundamental concepts. Section 3 investigates the properties of  problem \eqref{eq:ts-minimax}-\eqref{eq:st-minimax}, analyzing the existence and relationships among saddle points, minimax points and KKT points. Section 4 examines the SAA method for problem \eqref{eq:ts-minimax}-\eqref{eq:st-minimax}, including its convergence properties. In section 5, we present IPPGDA algorithm for solving problem \eqref{eq:ts-minimax}-\eqref{eq:st-minimax}, along with its subsequence and global convergence analysis, and demonstrate the effectiveness of our proposed algorithm and the convergence behavior of the SAA method through preliminary numerical experiments. Section 6 concludes the paper. 

{\bf Notation} For a  Lipschitz continuous function $f:\mathbb{R}^n\to\mathbb{R}$, $\partial f(\bar{x})$ denotes the Clarke subdifferential \cite{clarke1990optimization} of $f$  at point $\bar{x}$. For a  Lipschitz continuously differentiable function $g:\mathbb{R}^n\times\mathbb{R}^m\to\mathbb{R}$, $\partial^2g(\bar{x}, \bar{y})$ represents the Clarke generalized Hessian of $g$ at point $(\bar{x}, \bar{y})$, while $\partial_{xx}^2g(\bar{x}, \bar{y})$ and $\partial_{yy}^2g(\bar{x}, \bar{y})$ denote the Clarke generalized Hessians of $g$  with respect to (w.r.t.) $x$ and $y$ at point $(\bar{x}, \bar{y})$.
For a vector-valued function $H:\mathbb{R}^n\times\mathbb{R}^m\to\mathbb{R}^n$, $\partial_x H(\bar{x},\bar{y})$ denotes the Clarke generalized Jacobian of $H$ w.r.t. $x$ at point $(\bar{x},\bar{y})$.
A function $g:\mathbb{R}^n\times\mathbb{R}^m\to\mathbb{R}$ is said (strongly) convex-(strongly) concave if $g(\cdot, y)$ is (strongly) convex in $\mathbb{R}^n$ for any fixed $y\in\mathbb{R}^m$, and $g(x, \cdot)$ is (strongly) concave in $\mathbb{R}^m$ for any fixed $x\in\mathbb{R}^n$.
For $a\in\mathbb{R}^n$, $\|a_+\|_0:=\sum_{i=1}^n(\max\{a_i, 0\})^0$ with $0^0=0$, where $a_i$ is the $i$th element of $a$. For $x\in\mathbb{R}^n$ and $X, Y\subseteq\mathbb{R}^n$, $\mathbb{D}(x, Y):=\inf_{y\in Y}\|x-y\|$ and $\mathbb{D}(X, Y):=\sup_{x\in X}\inf_{y\in Y}\|x-y\|$. Let  $\mathcal{L}^{n}_p$ denote the Lebesgue space of measurable functions from a measure space to $\mathbb{R}^{n}$ with finite $L^p$-norm, where $p\geq1$ and for any measurable function ${\bf x}\in \mathcal{L}^{n}_p$, the $L^p$-norm is defined as $\|{\bf x}\|_p:=(\int_{\Omega}\sum_{i=1}^n|({\bf x}(\xi(\w)))_i|^pP(d\w))^{1/p}$. For a convex set $X\subset\mathbb{R}^n$ and $x\in \mathbb{R}^n$, $\mathcal{N}_X(x)$ denotes the normal cone to $X$ at $x$.

\section{Examples, assumptions and concepts}
\label{sec:1}
In this section, we provide two motivating examples for the study of the two-stage stochastic minimax problem \eqref{eq:ts-minimax}-\eqref{eq:st-minimax}. Additionally, we introduce several necessary assumptions and concepts required for the theoretical analysis in this paper.

The first example is a renewable energy storage scheduling problem in a stochastic environment that combines both risk-neutral and risk-averse criteria.

\begin{example}\label{ex:1}[{Renewable energy storage scheduling}]:
A microgrid operator selects energy storage capacity $x \geq 0$ in the first stage to minimize the total cost, which comprises two components:
(1) The investment cost $C_{\text{inv}}(x, z)$, subject to an adversarial uncertainty $z\in Z$ representing risks like volatile material prices or uncertain government subsidies;
(2) The expected future losses under renewable generation uncertainty $\xi$ and adversarial electricity prices $p \in \mathcal{P}(\xi)\subset\mathbb{R}^{24}_+$, where the uncertainty set expands price bounds proportionally to the renewable forecast error $\xi$.

In the second stage, after observing $\xi$, the operator adjusts charge/discharge decisions $y_t^{\text{ch}}$ and $y_t^{\text{dis}}$ ($t = 1,\dots,24$) subject to storage constraints: power limits $0 \leq y_t^{\text{ch}}, y_t^{\text{dis}} \leq 0.2x$, state-of-charge dynamics $\text{SOC}_t = \text{SOC}_{t-1} + 0.9y_t^{\text{ch}} - y_t^{\text{dis}}/0.9$ with $0 \leq \text{SOC}_t \leq x$, and boundary conditions $\text{SOC}_0 = \text{SOC}_{24} = 0$. We denote by $Y(x)$ the feasible region of $y:=(y_t^{\text{ch}}, y_t^{\text{dis}})_{t = 1}^{24}$ satisfying above constraints.
Moreover,
the second-stage objective function is
\[
Q(y, p, \xi): =  \sum\limits_{t} \Big[ p_t(y_t^{\text{dis}} - y_t^{\text{ch}}) + 0.1(y_t^{\text{ch}} + y_t^{\text{dis}}) \Big] + 100 \left( \xi + \sum\limits_t (y_t^{\text{dis}} - y_t^{\text{ch}}) \right)^2,
\]
which captures real-time market costs, battery degradation penalties, and renewable energy imbalance penalties. The complete formulation, which integrates both stages, is a two-stage stochastic minimax problem as follows:
$$
\min_{x \geq 0}\max_{z\in Z} \left( C_{\text{inv}}(x,z) + \mathbb{E}_{\xi} \left[ \min\limits_{y\in Y(x)}\max_{p \in \mathcal{P}(\xi)} Q(y, p, \xi) \right] \right).
$$
\end{example}

The second example concerns a two-stage stochastic two-player zero-sum game.

\begin{example}\label{ex:2}[{Two-stage stochastic two-player zero-sum game}]: The two-player zero-sum game is a basic model in game theory \cite{washburn2014two}. There are two players, each with an associated set of strategies. While one player aims to maximize her payoff, the other player attempts to take action to minimize this payoff. 

The two-stage stochastic two-player zero-sum game is a two-stage stochastic extension of the two-player zero-sum game. There are two players in the game, player 1's problem is
\begin{equation}\label{eq:tzs-game1}
\min_{x_1\in X_1} \; F_1(x_1, y_1) + \bbe\left[\min_{x_2\in X_2(x_1, \xi)}F_2(x_2, y_2, \xi)\right]
\end{equation}
and player 2's problem is
\begin{equation}\label{eq:tzs-game2}
\min_{y_1\in Y_1} \; -F_1(x_1, y_1) + \bbe\left[\min_{y_2\in Y_2(y_1, \xi)} -F_2(x_2, y_2, \xi)\right].
\end{equation}
When $F_1$ and $F_2$ are convex w.r.t. $x_1$ and $x_2$, respectively, and concave w.r.t. $y_1$ and $y_2$, respectively, and when $X_1$, $Y_1$, $X_2(x_1, \xi)$ and $Y_2(y_1, \xi)$ are convex and compact sets, then \eqref{eq:tzs-game1}-\eqref{eq:tzs-game2} is equivalent to problem \eqref{eq:ts-minimax}-\eqref{eq:st-minimax}.

\end{example}

To investigate the two-stage stochastic minimax problem \eqref{eq:ts-minimax}-\eqref{eq:st-minimax}, we need the following assumptions.

\begin{assumption}\label{a:second-stage}
For every $(x_1, y_1)\in X_1\times Y_1$ and
$\xi\in\Xi$,
\begin{itemize}
\item[(i)] $F_2$ is continuous, and $F_2(\cdot, \cdot, \xi)$ is Lipschitz continuously differentiable. 
Moreover,
 $\nabla_{x_2} F_2$ and $\nabla_{y_2} F_2$ are Lipschitz continuous;

\item[(ii)] $X_2(x_1, \xi)$ and $Y_2(y_1, \xi)$ are nonempty;



\item[(iii)] 
$B(\xi)$ and $W(\xi)$ are of full row rank.
\end{itemize}
\end{assumption}


\begin{assumption}\label{a:xicompact-regular}
The support set $\Xi\subset\mathbb{R}^l$ of the random vector $\xi$ is compact. The random matrices $A(\cdot)$, $B(\cdot)$, $T(\cdot)$, $W(\cdot)$ and random vectors $c(\cdot), h(\cdot)$ are continuous. 
\end{assumption}

\begin{remark}
In the case when $\Xi$ is unbounded, under the tightness of random variable $\xi$, for any $\epsilon\in(0,1)$ there exists a compact subset $\bar{\Xi}\subset \Xi$ such that $\prob\{\xi\in\bar{\Xi}\}\geq 1-\epsilon$. Then we may omit the $\xi\in\Xi\backslash\bar{\Xi}$ and consider the compact support $\bar{\Xi}$ in our problem.
\end{remark}

We  provide the definitions of the following concepts for the two-stage stochastic minimax problem \eqref{eq:ts-minimax}-\eqref{eq:st-minimax}: saddle point, local saddle point, global minimax point, and local minimax point.

\begin{definition}
A point $(x_1^*, y_1^*)\in X_1\times Y_1$ is called a saddle point of problem \eqref{eq:ts-minimax}-\eqref{eq:st-minimax} if for all $(x_1, y_1)\in X_1\times Y_1$, it holds
\begin{equation}\label{eq:saddle21}
\psi(x_1^*, y_1)\leq \psi(x_1^*, y_1^*)\leq \psi(x_1, y_1^*).
\end{equation}
We call $(x_1^*, y_1^*)\in X_1\times Y_1$ a local saddle point of problem \eqref{eq:ts-minimax}-\eqref{eq:st-minimax}, if there exists a $\delta>0$ such that \eqref{eq:saddle21} holds for all $(x_1, y_1)\in (X_1\times Y_1)\cap \B((x_1^*, y_1^*), \delta)$, where $\B((x_1^*, y_1^*), \delta)$ denotes the neighborhood of $(x_1^*, y_1^*)$ with radius $\delta$.
\end{definition}

\begin{definition}\label{d:saddle-mapping-2}
A pair of mappings $(\bar{{\bf x}}_2: X_1\times Y_1\times \Xi\to\mathbb{R}^{n_2}, \bar{{\bf y}}_2: X_1\times Y_1\times \Xi\to\mathbb{R}^{m_2})$ is called a  saddle point mapping of the second-stage minimax problem in \eqref{eq:st-minimax}, if $(x_2^*, y_2^*):=(\bar{{\bf x}}_2(x_1, y_1, \xi), \bar{{\bf y}}_2(x_1, y_1, \xi))$ is a saddle point of the minimax problem in \eqref{eq:st-minimax}, namely, for any $y_2\in Y_2(y_1, \xi)$ and $x_2\in X_2(x_1, \xi)$,
\[
F_2(x_2^*, y_2, \xi)\leq F_2(x_2^*, y_2^*, \xi)\leq F_2(x_2, y_2^*, \xi).
\]
\end{definition}


\begin{definition}\label{d:glolocalminimax}
A point $(\bar{x}_1, \bar{y}_1)\in X_1\times Y_1$ is a global minimax point of problem \eqref{eq:ts-minimax}-\eqref{eq:st-minimax}, if
\begin{equation}\label{eq:fs-global}
\psi(\bar{x}_1, y_1) \leq \psi(\bar{x}_1, \bar{y}_1) \leq \max_{y_1'\in Y_1} \psi(x_1, y_1')
\end{equation}
holds for any $(x_1, y_1)\in X_1\times Y_1$. 

Moreover,
a point $(\bar{x}_1, \bar{y}_1)\in X_1\times Y_1$ is a local minimax point of problem \eqref{eq:ts-minimax}-\eqref{eq:st-minimax}, if there exists $\delta_0>0$, such that for any $\delta\in(0, \delta_0]$ and any $(x_1, y_1)\in X_1\times Y_1$ satisfying $\|x_1-\bar{x}_1\|\leq \delta$ and $\|y_1-\bar{y}_1\|\leq \delta$, we have
\[
\psi(\bar{x}_1, y_1) \leq \psi(\bar{x}_1, \bar{y}_1) \leq \max_{y_1'\in Y_1} \psi(x_1, y_1').
\]
\end{definition}

Let $\bar{\psi}(x_1):=\max_{y_1\in Y_1}\psi(x_1, y_1)$. A point $x_1^*$ is called a stationary point of
\begin{equation}\label{eq:first-stage-min}
\min_{x_1\in X_1} \; \bar{\psi}(x_1),
\end{equation}
if $0\in \partial \bar{\psi}(x_1^*)+\mathcal{N}_{X_1}(x_1^*)$.

\begin{remark}\label{r:localminimax}
Note that the definition of a local minimax point in Definition \ref{d:glolocalminimax} implies that $\bar{y}_1$ is a local maximum point of $\psi(\bar{x}_1, \cdot)$ (since $\psi(\bar{x}_1, \cdot)$ is strongly concave, $\bar{y}_1$ is also a global maximum point of $\psi(\bar{x}_1, \cdot)$), and $\bar{x}_1$ is a local minimum of $\bar{\psi}(\cdot)$.
Note also that this definition is a simplified version of \cite[Definition 14]{jin2020local}, due to the strong concavity of $\psi$ in $y_1$.
\end{remark}

We also consider the following minimax problem
\begin{equation}\label{eq:ts-minimax-interchange}
\min_{(x_1,  {\bf x}_2)\in {\bf X}} \max_{(y_1, {\bf y}_2)\in {\bf Y}} \; F_1(x_1, y_1) + \bbe\left[F_2({\bf x}_2(\xi), {\bf y}_2(\xi), \xi)\right],
\end{equation}
where
\begin{multline*}
{\bf X}:=\{(x_1, {\bf x}_2)\in\mathbb{R}^{n_1}\times\mathcal{L}^{n_2}_p: x_1\in X_1,
T(\xi)x_1+W(\xi) {\bf x}_2(\xi)\leq h(\xi), \;a.e.\; \xi\in\Xi\}
\end{multline*}
and
\begin{multline*}
{\bf Y}:=\{(y_1, {\bf y}_2)\in\mathbb{R}^{m_1}\times\mathcal{L}^{m_2}_p: y_1\in Y_1,
A(\xi)y_1+B(\xi){\bf y}_2(\xi)\leq c(\xi), \;a.e.\; \xi\in\Xi\}.
\end{multline*}
We will consider the relationship between \eqref{eq:ts-minimax-interchange} and the two-stage stochastic minimax problem \eqref{eq:ts-minimax}-\eqref{eq:st-minimax} in Section \ref{sec:31}. Moreover, the definition of a saddle point, local saddle point, global minimax point and local minimax point for the minimax problem \eqref{eq:ts-minimax-interchange} is as follows.

\begin{definition}
A point $(x_1^*, {\bf x}_2^*,y_1^*, {\bf y}_2^*)\in {\bf X}\times {\bf Y}$ is called a   saddle point of the minimax problem \eqref{eq:ts-minimax-interchange} if for any $(x_1,  {\bf x}_2, y_1, {\bf y}_2)\in {\bf X}\times {\bf Y}$, we have
\begin{equation}\label{eq:twostagesaddlepoint}
\begin{array}{lll}
F_1(x_1^*, y_1) + \bbe\left[F_2({\bf x}_2^*(\xi), {\bf y}_2(\xi), \xi)\right]
&\leq& F_1(x_1^*, y_1^*) + \bbe\left[F_2({\bf x}_2^*(\xi), {\bf y}_2^*(\xi), \xi)\right]\\
&\leq& F_1(x_1, y_1^*) + \bbe\left[F_2({\bf x}_2(\xi), {\bf y}_2^*(\xi), \xi)\right].
\end{array}
\end{equation}
 We call $(x_1^*, {\bf x}_2^*,y_1^*, {\bf y}_2^*)\in {\bf X}\times {\bf Y}$ a   local saddle point of the minimax problem \eqref{eq:ts-minimax-interchange} if there exists a $\delta>0$ such that \eqref{eq:twostagesaddlepoint} holds for all $(x_1, {\bf x}_2, y_1, {\bf y}_2)\in ({\bf X}\times {\bf Y})\cap\B((x_1^*, {\bf x}_2^*,y_1^*, {\bf y}_2^*), \delta)$, where $\B((x_1^*, {\bf x}_2^*,y_1^*, {\bf y}_2^*), \delta)$ denotes the neighborhood of $(x_1^*, {\bf x}_2^*,y_1^*, {\bf y}_2^*)$ with radius $\delta$, that is $\B((x_1^*, {\bf x}_2^*,y_1^*, {\bf y}_2^*), \delta)=\{(x_1, {\bf x}_2,y_1, {\bf y}_2)\in\mathbb{R}^{n_1}\times\mathcal{L}^{n_2}_p\times\mathbb{R}^{m_1}\times\mathcal{L}^{m_2}_p: \|x_1-x_1^*\|_2+\|{\bf x}_2-{\bf x}_2^*\|_p+\|y_1-y_1^*\|_2+\|{\bf y}_2-{\bf y}_2^*\|_p\leq \delta\}$.
\end{definition}

\begin{definition}
A point $(\tilde{x}_1, \tilde{{\bf x}}_2,\tilde{y}_1, \tilde{{\bf y}}_2)\in {\bf X}\times {\bf Y}$ is called a global minimax point of the minimax problem \eqref{eq:ts-minimax-interchange} if
\begin{align}
F_1(\tilde{x}_1, y_1) + \mathbb{E}\left[F_2(\tilde{{\bf x}}_2(\xi), {\bf y}_2(\xi), \xi)\right] &\leq F_1(\tilde{x}_1, \tilde{y}_1) + \mathbb{E}\left[F_2(\tilde{{\bf x}}_2(\xi), \tilde{{\bf y}}_2(\xi), \xi)\right] \nonumber\\
\leq &\max_{(y'_1, {\bf y}'_2(\xi))\in {\bf Y}}F_1(x_1, y'_1) + \mathbb{E}\left[F_2({\bf x}_2(\xi), {\bf y}'_2(\xi), \xi)\right]\label{eq:ts-global}
\end{align}
holds for any $(x_1, {\bf x}_2, y_1, {\bf y}_2)\in {\bf X}\times {\bf Y}$.

Moreover, a point $(\tilde{x}_1, \tilde{{\bf x}}_2,\tilde{y}_1, \tilde{{\bf y}}_2)\in {\bf X}\times {\bf Y}$ is called a local minimax point of the minimax problem \eqref{eq:ts-minimax-interchange}, if there exists $\delta_0>0$,
such that for any $\delta\in(0, \delta_0]$ and any $(x_1, {\bf x}_2, y_1, {\bf y}_2)\in {\bf X}\times {\bf Y}$ satisfying $\|x_1-\tilde{x}_1\| +\| {\bf x}_2- \tilde{{\bf x}}_2\|_p\leq \delta$ and $\|y_1 -\tilde{y}_1\| +\| {\bf y}_2- \tilde{{\bf y}}_2\|_p\leq \delta$, we have
\begin{align*}
F_1(\tilde{x}_1, y_1) + \mathbb{E}\left[F_2(\tilde{{\bf x}}_2(\xi), {\bf y}_2(\xi), \xi)\right] &\leq F_1(\tilde{x}_1, \tilde{y}_1) + \mathbb{E}\left[F_2(\tilde{{\bf x}}_2(\xi), \tilde{{\bf y}}_2(\xi), \xi)\right] \\
\leq &\max_{(y'_1, {\bf y}'_2(\xi))\in {\bf Y}}F_1(x_1, y_1') + \mathbb{E}\left[F_2({\bf x}_2(\xi), {\bf y}_2'(\xi), \xi)\right].
\end{align*}
\end{definition}

\section{Properties of two-stage stochastic minimax problems}

In this section, 
we investigate the properties of minimax value functions, saddle points and minimax points of problem \eqref{eq:ts-minimax}-\eqref{eq:st-minimax}.

For given $\xi\in\Xi$ and  $(x_1, y_1)\in X_1\times Y_1$, we first investigate the properties of optimal value functions
\begin{equation}\label{eq:f21}
 f_{21}(x_2,  y_1, \xi):=\displaystyle{\max_{y_2\in Y_2(y_1, \xi)}F_2(x_2, y_2, \xi)},
\end{equation}
\begin{equation}\label{eq:f22}
 f_{22}(x_1, y_2,  \xi):=\displaystyle{\min_{x_2\in X_2(x_1, \xi)}F_2(x_2, y_2, \xi)},
\end{equation}
and minimax value function $\psi_2(x_1, y_1, \xi)$. Note that the minimax problem in \eqref{eq:st-minimax} is a strongly convex-strongly concave minimax problem with linear constraints for any $(x_1, y_1, \xi)$.
Then the KKT condition of the minimax problem in \eqref{eq:st-minimax} 
with given $(x_1, y_1)\in X_1\times Y_1$ can be stated as the following system of nonsmooth equations in the variable $\mu = (x_2, y_2, \pi_{x_2}, \pi_{y_2})$:
\begin{equation}\label{eq:nonsmooth-equation}
H(\mu,\xi) = \left(
\begin{array}{c}
 \nabla_{x_2}F_2(x_2, y_2, \xi) + W(\xi)^\top \pi_{x_2}\\
 -\nabla_{y_2}F_2(x_2, y_2, \xi) + B(\xi)^\top \pi_{y_2}\\
 \min( \pi_{x_2}, h(\xi)-T(\xi)x_1-W(\xi)x_2) \\
\min( \pi_{y_2}, c(\xi)-A(\xi)y_1-B(\xi)y_2)
\end{array}
\right)=0,
\end{equation}
where $\pi_{x_2}\in \mathbb{R}^{l_2}$ and $\pi_{y_2}\in \mathbb{R}^{s_2}$ are corresponding Lagrange multipliers, and  ``min" denotes the component-wise minimum operator on a pair of vectors.

\begin{lemma}\label{l:2stage-bound-continuity}
Under Assumptions \ref{a:second-stage}-\ref{a:xicompact-regular}, the following statements hold.
\begin{itemize}
\item[(i)] For any given $\xi\in\Xi$,
$f_{21}(\cdot, \cdot, \xi)$ is  continuously differentiable and strongly convex-concave; 
moreover,  $\nabla_{y_1}  f_{21}$ and  $\nabla_{x_2}  f_{21}$ are Lipschitz continuous w.r.t. $(x_2, y_1)$ and continuous w.r.t. $\xi$.

 \item[(ii)] For any given $\xi\in\Xi$,
 $f_{22}(\cdot, \cdot, \xi)$ is  continuously differentiable and  convex-strongly concave;
 moreover,  $\nabla_{x_1}  f_{22}$ and  $\nabla_{y_2}  f_{22}$ are Lipschitz continuous w.r.t. $(x_1, y_2)$ and continuous w.r.t. $\xi$.

\item[(iii)] For any given $\xi\in\Xi$, $\psi_2(\cdot, \cdot, \xi)$ is  convex-concave and continuously differentiable. And  there exist ${\bm \pi}_{x_2}: X_1\times Y_1\times \Xi\to\mathbb{R}^{l_2}$ and ${\bm \pi}_{y_2}: X_1\times Y_1\times \Xi\to\mathbb{R}^{s_2}$ such that
\[(\bar{{\bf x}}_2(x_1, y_1, \xi), \bar{{\bf y}}_2(x_1, y_1, \xi), {\bm \pi}_{x_2}(x_1, y_1, \xi), {\bm \pi}_{y_2}(x_1, y_1, \xi))\]
 satisfies the KKT condition 
 of the minimax problem in \eqref{eq:st-minimax}, where
\[
\nabla_{x_1} \psi_2(\cdot, \cdot, \xi)=T(\xi)^\top {\bm \pi}_{x_2}(\cdot, \cdot, \xi)\;,\;\nabla_{y_1} \psi_2(\cdot, \cdot, \xi)=-A(\xi)^\top {\bm \pi}_{y_2}(\cdot, \cdot, \xi),
\]
and $(\bar{{\bf x}}_2(x_1, y_1, \xi), \bar{{\bf y}}_2(x_1, y_1, \xi))$ is the unique saddle point of the minimax problem in \eqref{eq:st-minimax}.
Moreover, $(\bar{{\bf x}}_2, \bar{{\bf y}}_2,  {\bm \pi}_{x_2}, {\bm \pi}_{y_2})$ is Lipschitz continuous w.r.t. $(x_1, y_1)$ over $X_1\times Y_1$ and continuous w.r.t. $\xi$.

\item[(iv)]
 $(\bar{{\bf x}}_2(x_1, y_1, \xi), \bar{{\bf y}}_2(x_1, y_1, \xi))$ is contained in a convex and compact set $\hat{X}_2\times \hat{Y}_2$ for all $(x_1, y_1, \xi)\in X_1\times Y_1\times \Xi$. Moreover, there exist convex and compact sets $\check{X}_2$ and $\check{Y}_2$ such that $\hat{X}_2\subset\check{X}_2$, $\hat{Y}_2\subset\check{Y}_2$,
\begin{equation}\label{eq:f21check}
\displaystyle{\max_{y_2\in Y_2(y_1, \xi)\cap \check{Y}_2}F_2(x_2, y_2, \xi)}= f_{21}(x_2,  y_1, \xi),\;\; \forall (x_2, y_1, \xi)\in \hat{X}_2\times Y_1\times \Xi
\end{equation}
and
\begin{equation}\label{eq:f22check}
\hspace{-0.09in} \displaystyle{\min_{x_2\in X_2(x_1, \xi)\cap \check{X}_2}F_2(x_2, y_2, \xi)}= f_{22}(x_1,  y_2, \xi), \;\; \forall (x_1, y_2, \xi)\in X_1\times \hat{Y}_2\times \Xi.
\end{equation}
\end{itemize}
\end{lemma}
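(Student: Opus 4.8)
The plan is to establish the four claims in order, since $\psi_2$ in (iii) arises by applying one further optimization layer to $f_{21}$ or $f_{22}$ from (i)--(ii), and (iv) refines the solution maps produced in (iii). For (i), I would first note that $f_{21}(\cdot,y_1,\xi)$ is $\sigma$-strongly convex as a pointwise maximum over $y_2$ of the $\sigma$-strongly convex functions $F_2(\cdot,y_2,\xi)$, and that $f_{21}(x_2,\cdot,\xi)$ is concave because, writing the constraint as $B(\xi)y_2\le c(\xi)-A(\xi)y_1$, the perturbation value function $u\mapsto\max\{F_2(x_2,y_2,\xi):B(\xi)y_2\le u\}$ is concave in $u$, and $y_1\mapsto c(\xi)-A(\xi)y_1$ is affine. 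Strong concavity of $F_2$ in $y_2$ yields a unique maximizer $\bar{{\bf y}}_2(x_2,y_1,\xi)$, and the full row rank of $B(\xi)$ in Assumption \ref{a:second-stage}(iii) forces LICQ at every feasible point (any subset of the rows of a full-row-rank matrix being independent), hence a unique multiplier ${\bm\pi}_{y_2}$. Danskin's theorem then gives $\nabla_{x_2}f_{21}=\nabla_{x_2}F_2(x_2,\bar{{\bf y}}_2,\xi)$ and the envelope formula $\nabla_{y_1}f_{21}=-A(\xi)^\top{\bm\pi}_{y_2}$; their Lipschitz continuity in $(x_2,y_1)$ follows from the Lipschitz continuity of the solution--multiplier pair (established below) together with that of $\nabla F_2$. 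Part (ii) is the exact mirror image, minimizing over $x_2$ and invoking strong convexity of $F_2$ in $x_2$ and full row rank of $W(\xi)$.

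For (iii), I would use the two nested representations $\psi_2=\min_{x_2\in X_2(x_1,\xi)}f_{21}(x_2,y_1,\xi)=\max_{y_2\in Y_2(y_1,\xi)}f_{22}(x_1,y_2,\xi)$, the interchange being justified by strong duality for strongly convex--strongly concave problems, the strong convexity--concavity supplying the coercivity that replaces boundedness of the polyhedra. Convexity of $\psi_2$ in $x_1$ then follows from the first representation by partial minimization of the jointly convex function $(x_1,x_2)\mapsto f_{21}(x_2,y_1,\xi)$ over the convex feasible set $\{(x_1,x_2):T(\xi)x_1+W(\xi)x_2\le h(\xi)\}$, and concavity in $y_1$ follows because $\psi_2(x_1,\cdot,\xi)$ is an infimum, over the fixed set $X_2(x_1,\xi)$, of the concave-in-$y_1$ functions $f_{21}(x_2,\cdot,\xi)$. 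Uniqueness of the saddle point is immediate from strong convexity--strong concavity. The gradient formulas $\nabla_{x_1}\psi_2=T(\xi)^\top{\bm\pi}_{x_2}$ and $\nabla_{y_1}\psi_2=-A(\xi)^\top{\bm\pi}_{y_2}$ I would obtain from the saddle-point envelope theorem applied to the Lagrangian $L=F_2+{\bm\pi}_{x_2}^\top(T(\xi)x_1+W(\xi)x_2-h(\xi))-{\bm\pi}_{y_2}^\top(A(\xi)y_1+B(\xi)y_2-c(\xi))$.

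The main obstacle, and the technical heart of the lemma, is proving that the primal--dual map $(\bar{{\bf x}}_2,\bar{{\bf y}}_2,{\bm\pi}_{x_2},{\bm\pi}_{y_2})$ is single-valued and Lipschitz in $(x_1,y_1)$ and continuous in $\xi$; every differentiability and Lipschitz claim in (i)--(iii) rests on this. I would treat the KKT system $H(\mu,\xi)=0$ in \eqref{eq:nonsmooth-equation} as a parametric nonsmooth equation and apply the nonsmooth implicit function theorem (Clarke's inverse function theorem / Robinson strong regularity). This reduces to verifying that every element of the Clarke generalized Jacobian $\partial_\mu H$ is nonsingular, uniformly over the compact parameter set. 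In that Jacobian the two min-type residual rows contribute, coordinatewise, either an identity entry in the multiplier block or a row of $-W(\xi)$, $-B(\xi)$ in the primal block (convex combinations at ties), while the first two rows carry the Hessian blocks; strong convexity--strong concavity gives $\nabla^2_{x_2x_2}F_2\succeq\sigma I$ and $-\nabla^2_{y_2y_2}F_2\succeq\sigma I$, and full row rank of $W(\xi),B(\xi)$ handles the active-constraint blocks, so a Schur-complement (equivalently, strong-monotonicity) argument yields nonsingularity---the minimax KKT being exactly the KKT of a strongly monotone, affinely constrained variational inequality, which is strongly regular. Continuity in $\xi$ then comes from Assumption \ref{a:xicompact-regular}.

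Finally, for (iv), the map $(x_1,y_1,\xi)\mapsto(\bar{{\bf x}}_2,\bar{{\bf y}}_2)$ is continuous on the compact set $X_1\times Y_1\times\Xi$ by (iii), so its image is bounded; I would take $\hat{X}_2,\hat{Y}_2$ to be the closed convex hulls of its two projections. For the localization identities \eqref{eq:f21check}--\eqref{eq:f22check}, I would observe that for $x_2\in\hat{X}_2$ the maximizer $\bar{{\bf y}}_2(x_2,y_1,\xi)$ of $F_2(x_2,\cdot,\xi)$ over $Y_2(y_1,\xi)$ is again unique by strong concavity and, by continuity over the compact set $\hat{X}_2\times Y_1\times\Xi$, uniformly bounded; choosing $\check{Y}_2$ to be any convex compact set containing both $\hat{Y}_2$ and all these maximizers renders the extra constraint $y_2\in\check{Y}_2$ inactive, so the restricted and unrestricted maxima coincide, and symmetrically for $\check{X}_2$.
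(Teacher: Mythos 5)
Your proposal is correct, and parts (i), (ii), (iv) together with the convexity/concavity and gradient-formula arguments in (iii) match the paper's proof almost step for step (the paper cites \cite[Proposition 2.21, Corollary 2.23]{shapiro2021lectures} for concavity of the value function and the multiplier-based envelope formula, and \cite[Theorem 2]{wachsmuth2013licq} where you invoke LICQ from full row rank, but the content is the same). The genuine difference is at the technical heart, the Lipschitz continuity of the primal--dual map. The paper proceeds in a \emph{nested} fashion: it applies its Appendix result (Theorem \ref{t:imp}, a generalization of Dontchev--Rockafellar 2G.8 built on Izmailov's parametric CD-regularity) first to the inner maximization \eqref{eq:f21}, using negative definiteness of $\partial^2_{y_2y_2}F_2$, to get Lipschitz continuity of $(\hat{{\bf y}}_2,\hat{{\bm \pi}}_{y_2})$ in $(x_2,y_1)$; then, in part (iii), it applies the same theorem a second time to the composed problem \eqref{eq:bbfmin}, which requires knowing that the generalized Hessian $\partial^2_{x_2x_2}f_{21}$ is positive definite (inherited from part (i)). You instead apply a Clarke-type nonsmooth implicit function theorem \emph{once} to the full coupled KKT system \eqref{eq:nonsmooth-equation}, verifying uniform nonsingularity of every element of $\partial_\mu H$ via the Schur-complement/strong-monotonicity structure --- which is, in fact, exactly the computation the paper performs later in Proposition \ref{p:lowerbound} for the semismooth Newton analysis, so your route is sound and arguably more economical: it bypasses the need to manufacture second-order information for the intermediate value function $f_{21}$. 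One caveat: part (i) asserts Lipschitz continuity of $\nabla f_{21}$ in $(x_2,y_1)$ with $x_2$ a \emph{free} parameter, which does not follow from regularity of the full system (where $x_2$ is an unknown); you must apply your IFT separately to the subsystem consisting of the $y_2$- and $\pi_{y_2}$-rows of $H$, parametrized by $(x_2,y_1,\xi)$. Your nonsingularity verification ($-\partial^2_{y_2y_2}F_2\succeq\sigma I$ plus full row rank of $B(\xi)$) is precisely what that subsystem needs, so this is an imprecision of phrasing rather than a gap, but it should be stated explicitly in a full write-up.
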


\begin{proof}
(i) For any given $(x_2, \xi)\in \mathbb{R}^{n_2}\times \Xi$, the  concavity of $ f_{21}$ w.r.t. $y_1$  is established in \cite[Proposition 2.21 (i)]{shapiro2021lectures}. Moreover, for any $\lambda\in(0,1)$ and $x_2^1, x_2^2\in \mathbb{R}^{n_2}$, we have
\[
\begin{array}{lll}
  &f_{21}(\lambda x_2^1 + (1-\lambda) x_2^2, y_1, \xi) \\=& \displaystyle{\max_{y_2\in Y_2(y_1, \xi)}} F_2(\lambda x_2^1 + (1-\lambda) x_2^2, y_2, \xi)\\
 \leq & \displaystyle{\max_{y_2\in Y_2(y_1, \xi)}}\lambda F_2( x_2^1, y_2, \xi) + (1-\lambda) F_2(x_2^2, y_2, \xi)-\frac{\sigma}{2}\lambda(1-\lambda)\|x_2^1-x_2^2\|^2\\
 \leq& \lambda \displaystyle{\max_{y_2\in Y_2(y_1, \xi)}} F_2( x_2^1, y_2, \xi) + (1-\lambda)\displaystyle{\max_{y_2\in Y_2(y_1, \xi)}} F_2(x_2^2, y_2, \xi)-\frac{\sigma}{2}\lambda(1-\lambda)\|x_2^1-x_2^2\|^2\\
 =& \lambda f_{21}( x_2^1 , y_1, \xi) + (1-\lambda)  f_{21}(x_2^2, y_1, \xi)-\frac{\sigma}{2}\lambda(1-\lambda)\|x_2^1-x_2^2\|^2,
\end{array}
\]
which implies the strong convexity of $ f_{21}$ w.r.t. $x_2$.

Note that the KKT condition of problem in \eqref{eq:f21} is
\begin{equation}\label{eq:kktf21-2}
    \begin{array}{l}
           0= -\nabla_{y_2} F_2(x_2, y_2, \xi) + B(\xi)^\top \pi_{y_2},  \\
         0\leq \pi_{y_2}\bot c(\xi)-A(\xi)y_1-B(\xi)y_2\geq 0,
    \end{array}
\end{equation}
where $\pi_{y_2}\in\mathbb{R}^{s_2}$ is the corresponding Lagrange multiplier. Let $(\hat{y}_2, \hat{\pi}_{y_2})$ be a KKT pair of \eqref{eq:kktf21-2}. By \cite[Theorem 2]{wachsmuth2013licq}, under full row rank of $B(\xi)$ in $Y_2(y_1, \xi)$,  the set of Lagrange multipliers $\{\hat{\pi}_{y_2}\}$ is a singleton. Then by \cite[Corollary 2.23]{shapiro2021lectures},  $\nabla_{y_1}  f_{21}(x_2, y_1, \xi)=\{-A(\xi)^\top \hat{\pi}_{y_2}\}$ is a singleton and $f_{21}(x_2, \cdot, \xi)$ is differentiable. Moreover, by the Danskin theorem, $\nabla_{x_2} f_{21}(x_2,  y_1, \xi)=\nabla_{x_2} F_2(x_2,  \hat{y}_2, \xi)$, where $\hat{y}_2$ is the unique solution of \eqref{eq:f21}.

Let $(\hat{{\bf y}}_2:\mathbb{R}^{n_2}\times Y_1\times \Xi\to\mathbb{R}^{m_2}, \hat{{\bm \pi}}_{y_2}:\mathbb{R}^{n_2}\times Y_1\times \Xi\to\mathbb{R}^{s_2})$ be the KKT pair mapping of \eqref{eq:kktf21-2}, such that $(\hat{{\bf y}}_2(x_2, y_1, \xi), \hat{{\bm \pi}}_{y_2}(x_2, y_1, \xi)) = (\hat{y}_2, \hat{\pi}_{y_2})$, the KKT pair of \eqref{eq:kktf21-2} with corresponding $(x_2, y_1, \xi)$.
Since $F_2(\cdot, \cdot, \xi)$ is  $\sigma$-strongly convex-strongly concave,  every element in $\partial_{y_2y_2}^2F_2(x_2, y_2, \xi)$ is a negative definite matrix. Then under Assumption \ref{a:second-stage} (i) and (iii), and Assumption \ref{a:xicompact-regular}, applying Theorem~\ref{t:imp} (in the Appendix) to the problem in \eqref{eq:f21} yields that $(\hat{{\bf y}}_2, \hat{{\bm \pi}}_{y_2})$ is Lipschitz continuous w.r.t. $(x_2, y_1)$, and continuous w.r.t. $\xi$, which implies $f_{21}(\cdot, \cdot, \xi)$ is  continuously differentiable for any given $\xi\in  \Xi$, $\nabla_{y_1} f_{21}(x_2, y_1, \xi)=-A(\xi)^\top \hat{{\bm \pi}}_{y_2}(x_2, y_1, \xi)$, $\nabla_{x_2} f_{21}(x_2, y_1, \xi)=\nabla_{x_2} F_2(x_2,  \hat{{\bf y}}_2(x_2, y_1, \xi), \xi)$,
$\nabla_{y_1} f_{21}$ and $\nabla_{x_2} f_{21}$ are Lipschitz continuous w.r.t. $(x_2, y_1)$, and continuous w.r.t. $\xi$.

The proof of part (ii) follows a similar argument as part (i) and will be omitted here.

 (iii) Note that
\begin{equation}\label{eq:bbfmin}
\psi_2(x_1, y_1, \xi):=\displaystyle{\min_{x_2\in X_2(x_1, \xi)}f_{21}(x_2,  y_1, \xi)}
\end{equation}
and $f_{21}(x_2,  \cdot, \xi)$ is  concave. For any $\lambda\in[0,1]$, and $y^1_1, y^2_1\in Y_1$, let
$$x_2^\lambda= \argmin_{x_2\in X_2(x_1, \xi)}f_{21}(x_2,  \lambda y^1_1 + (1-\lambda)y_1^2, \xi),$$
$x_2^1= \displaystyle{\argmin_{x_2\in X_2(x_1, \xi)}f_{21}(x_2,   y^1_1 , \xi)}$
and
$x_2^2= \displaystyle{\argmin_{x_2\in X_2(x_1, \xi)}f_{21}(x_2,   y^2_1 , \xi)}$. Then
for any given $x_1$ and $\xi$,  we have
$$
\begin{array}{lll}
\psi_2(x_1, \lambda y^1_1 + (1-\lambda)y_1^2, \xi) &=& f_{21}(x_2^\lambda,  \lambda y^1_1 + (1-\lambda)y_1^2, \xi)\\
&\geq& \lambda f_{21}(x_2^\lambda,   y^1_1, \xi) + (1-\lambda)f_{21}(x_2^\lambda,   y_1^2, \xi)\\
&\geq& \lambda f_{21}(x_2^1,   y^1_1, \xi) + (1-\lambda)f_{21}(x_2^2,   y_1^2, \xi)\\
&=&\lambda \psi_2(x_1,  y^1_1, \xi) + (1-\lambda)\psi_2(x_1, y_1^2, \xi),
\end{array}
$$
which implies the  concavity of $\psi_2$ w.r.t. $y_1$. Moreover, applying \cite[Proposition 2.21]{shapiro2021lectures} to \eqref{eq:bbfmin}, $\psi_2$ is  convex w.r.t. $x_1$.

Moreover, by continuous differentiability and strong convexity  of $f_{21}(\cdot,  y_1, \xi)$  from part (i),  every element in $\partial^2_{x_2x_2}f_{21}(x_2,  y_1, \xi)$ is a positive definite matrix. Then under Assumption \ref{a:second-stage} (i) and (iii), Assumption \ref{a:xicompact-regular}
 and applying Theorem~\ref{t:imp} (in the Appendix) to the problem in \eqref{eq:bbfmin}, similar as in part (i), $(\bar{{\bf x}}_2, {\bm \pi}_{x_2})$, the KKT pair of the problem in \eqref{eq:bbfmin}, is Lipschitz continuous w.r.t. $(x_1, y_1)$ and continuous w.r.t. $\xi$. Combining with \cite[Corollary 2.23]{shapiro2021lectures},  $\nabla_{x_1} \psi_2(x_1, y_1, \xi)=\{T(\xi)^\top {\bm \pi}_{x_2}(x_1, y_1, \xi)\}$ is a singleton, $\nabla_{x_1} \psi_2$ is Lipschitz continuous w.r.t. $(x_1, y_1)$, and continuous w.r.t. $\xi$. Obviously, $\psi_2(\cdot, y_1, \xi)$ is continuously differentiable.

Since $F_2$ is $\sigma$-strongly convex-strongly concave,
\begin{equation}\label{eq:bbfmin-y}
\psi_2(x_1, y_1, \xi)=\displaystyle{\max_{y_2\in Y_2(y_1, \xi)}f_{22}(x_1,  y_2, \xi)}.
\end{equation}
Then similar as above argument, $(\bar{{\bf y}}_2, {\bm \pi}_{y_2})$, the KKT pair of \eqref{eq:bbfmin-y}, is Lipschitz continuous w.r.t. $(x_1, y_1)$, and continuous w.r.t. $\xi$. Then $\nabla_{y_1} \psi_2(x_1, y_1, \xi)=\{-A(\xi)^\top {\bm \pi}_{y_2}(x_1, y_1, \xi)\}$ is a singleton, $\nabla_{y_1} \psi_2$ is Lipschitz continuous w.r.t. $(x_1, y_1)$  and continuous w.r.t. $\xi$,  and $\psi_2(x_1, \cdot, \xi)$ is continuously differentiable.

Note also that
$(\bar{{\bf x}}_2(x_1, y_1, \xi), \bar{{\bf y}}_2(x_1, y_1, \xi), {\bm \pi}_{x_2}(x_1, y_1, \xi), {\bm \pi}_{y_2}(x_1, y_1, \xi))$
is the unique solution of 
the KKT condition of the minimax problem in \eqref{eq:st-minimax}, then $(\bar{{\bf x}}_2(x_1, y_1, \xi), \bar{{\bf y}}_2(x_1, y_1, \xi))$ is the unique saddle point of the minimax problem in  \eqref{eq:st-minimax}.

(iv) Since $X_1\times Y_1\times \Xi$ is  compact   and $(\bar{{\bf x}}_2, \bar{{\bf y}}_2)$ is  continuous, for all $(x_1, y_1, \xi)\in X_1\times Y_1\times \Xi$, there exists convex and compact set $\hat{X}_2\times \hat{Y}_2\subset \mathbb{R}^{n_2}\times \mathbb{R}^{m_2}$ such that $(\bar{{\bf x}}_2(x_1, y_1, \xi), \bar{{\bf y}}_2(x_1, y_1, \xi))\in \hat{X}_2\times \hat{Y}_2$.

 By part (i), the solution function $\hat{{\bf y}}_2$ of the problem in \eqref{eq:f21} is continuous over $\hat{X}_2\times Y_1\times \Xi$. By the boundedness of $\hat{X}_2\times Y_1\times \Xi$, there exists convex and compact set $\check{Y}_2$ such that $\hat{Y}_2\subset\check{Y}_2$, $ \hat{{\bf y}}_2(x_2, y_1, \xi)\in \check{Y}_2$ over $\hat{X}_2\times Y_1\times \Xi$  and \eqref{eq:f21check} holds.

Similarly, we can prove the existence of $\check{X}_2$ such that $\hat{X}_2\subset\check{X}_2$  and \eqref{eq:f22check} holds.
 \hfill $\square$
\end{proof}

To study the  two-stage  stochastic minimax problem \eqref{eq:ts-minimax}-\eqref{eq:st-minimax}, we need the following definitions.

\begin{definition}\cite[Section 9.2.4]{shapiro2021lectures} It is said that functions $h_1:\mathbb{R}^n\times \Xi\to\mathbb{R}$ and $h_2:\mathbb{R}^n\times \Xi\to\mathbb{R}$  are random lsc and random upper semicontinuous (usc) respectively, if the epi-graphical multifunction $\xi\to \inmat{epi}~h_1(\cdot, \xi)$ and the hypo-graphical multifunction $\xi\to \inmat{hypo}~h_2(\cdot, \xi)$ are closed valued and measurable respectively.
\end{definition}

\begin{definition}
A function $h_3:\mathbb{R}^n\times \mathbb{R}^m\times \Xi\to\mathbb{R}$ is random lower-upper  semicontinuous w.r.t. $(x, y)$ if for any given $y$,
$h_3(\cdot, y, \cdot)$ is random lsc, and for any given $x$, $h_3(x, \cdot, \cdot)$ is random usc.
\end{definition}

\begin{proposition}\label{p:objv}
Under  Assumptions \ref{a:second-stage}-\ref{a:xicompact-regular}, the following statements hold.
\begin{itemize}

\item[(i)]
 $f_{21}$ is  random lower-upper semicontinuous on $\hat{X}_2\times Y_1$,  
$f_{22}$ is  random lower-upper semicontinuous  on $X_1\times \hat{Y}_2$,  where $\hat{X}_2\times \hat{Y}_2$ is a compact set containing all saddle points of problem \eqref{eq:st-minimax}. 

\item[(ii)] 
$\psi_2$ is random lower-upper semicontinuous  on $X_1\times Y_1$.

\item[(iii)]  
If $f(\cdot) + \psi_1(\cdot, y_1)$ is quasi-convex  for any $y_1\in Y_1$, $\lambda>0$ and $\tilde{f}:\mathbb{R}^{n_1}\to\mathbb{R}^{n_1}$ is a continuously differentiable vector-valued function, then there exists a local saddle point $(x_1^*, y_1^*)$ of  problem
\begin{equation}\label{eq:ts-minimax-0}
\min_{x_1\in X_1} \max_{y_1\in Y_1} \; \psi_\lambda(x_1, y_1): = F_1(x_1, y_1) +\lambda\|\tilde{f}(x_1)_+\|_0+ \bbe\left[\psi_2(x_1, y_1, \xi)\right].
\end{equation}
\end{itemize}
\end{proposition}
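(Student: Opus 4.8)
My plan is to obtain (i)--(ii) directly from Lemma~\ref{l:2stage-bound-continuity} and to concentrate the real work on (iii). For (i) and (ii), Lemma~\ref{l:2stage-bound-continuity} shows that for each fixed $\xi$ the functions $f_{21}(\cdot,\cdot,\xi)$, $f_{22}(\cdot,\cdot,\xi)$ and $\psi_2(\cdot,\cdot,\xi)$ are continuous (indeed continuously differentiable) on the relevant compact sets, and that they depend continuously on $\xi$; together with Assumption~\ref{a:xicompact-regular} this makes each of them a Carath\'eodory integrand, i.e.\ jointly measurable and continuous in the decision variables. A Carath\'eodory function is random continuous, hence simultaneously random lsc in its minimization argument and random usc in its maximization argument. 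Applying this to $f_{21}$, $f_{22}$ and $\psi_2$ on the compact domains $\hat X_2\times Y_1$, $X_1\times\hat Y_2$ and $X_1\times Y_1$ from Lemma~\ref{l:2stage-bound-continuity}(iv) yields exactly the asserted random lower--upper semicontinuity; the only point needing care is measurability of the associated epi-/hypo-graphical multifunctions in $\xi$, which follows from $\xi$-continuity.

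For (iii) I would write $\phi(x_1,y_1):=F_1(x_1,y_1)+\mathbb{E}[\psi_2(x_1,y_1,\xi)]$, so that $\psi_\lambda=\phi+\lambda\|\tilde f(\cdot)_+\|_0$ and the penalty is independent of $y_1$. Because $g(\cdot)-\psi_1(x_1,\cdot)$ is $\sigma$-strongly convex and $\mathbb{E}[\psi_2(x_1,\cdot,\xi)]$ is concave by Lemma~\ref{l:2stage-bound-continuity}(iii), the map $\psi_\lambda(x_1,\cdot)$ is $\sigma$-strongly concave and usc on the convex compact set $Y_1$; hence the inner maximizer $y_1^*(x_1):=\argmax_{y_1\in Y_1}\phi(x_1,y_1)$ is unique, and since it depends on $x_1$ only through the continuous terms $\psi_1$ and $\mathbb{E}\psi_2$, a maximum-theorem argument combined with strong concavity shows $y_1^*(\cdot)$ is continuous. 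Then $\bar\phi(x_1):=\max_{y_1\in Y_1}\phi(x_1,y_1)=f(x_1)+(\text{continuous in }x_1)$ is lsc (as $f$ is lsc), and so is $\bar\psi_\lambda(x_1)=\bar\phi(x_1)+\lambda\|\tilde f(x_1)_+\|_0$, because each summand of the penalty equals $1$ when $\tilde f_i(x_1)>0$ and $0$ otherwise while $\tilde f$ is continuous. Minimizing the lsc function $\bar\psi_\lambda$ over the compact set $X_1$ yields a minimizer $x_1^*$; I set $y_1^*:=y_1^*(x_1^*)$ and claim $(x_1^*,y_1^*)$ is a local saddle point.

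The left inequality in the saddle definition holds globally, since $y_1^*$ maximizes $\phi(x_1^*,\cdot)$ and the penalty is $y_1$-free. The right inequality $\psi_\lambda(x_1^*,y_1^*)\le\psi_\lambda(x_1,y_1^*)$ I would establish on a small ball around $x_1^*$ by splitting according to the penalty value. With $S^+:=\{i:\tilde f_i(x_1^*)>0\}$, continuity of $\tilde f$ keeps the indices in $S^+$ positive and those with $\tilde f_i(x_1^*)<0$ negative on a small ball, so $\|\tilde f(x_1)_+\|_0\ge|S^+|=\|\tilde f(x_1^*)_+\|_0$ with integer values. Where the penalty strictly increases it jumps by at least $\lambda$; shrinking the ball so that $\phi(x_1,y_1^*)>\phi(x_1^*,y_1^*)-\lambda$ (possible because $\phi(\cdot,y_1^*)$ is lsc at $x_1^*$) makes this jump dominate and gives the inequality there. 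On the complementary, constant-penalty region $R$ we have $\psi_\lambda=\phi+\lambda|S^+|$, so the inequality reduces to showing that $x_1^*$ locally minimizes $\phi(\cdot,y_1^*)$ over $R$.

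The main obstacle is exactly this reduction on $R$. By construction $x_1^*$ minimizes the value function $\bar\psi_\lambda=\bar\phi+\text{penalty}$, i.e.\ it is only a (global) minimax point, and $\bar\phi(x_1)=\max_{y_1}\phi(x_1,y_1)\ge\phi(x_1,y_1^*)$ with equality merely at $x_1^*$; closing this gap to obtain $\phi(x_1,y_1^*)\ge\phi(x_1^*,y_1^*)$ on $R$ requires the $x_1$-side convexity together with the strong concavity in $y_1$, via a minimax equality $\min_{x_1}\max_{y_1}\phi=\max_{y_1}\min_{x_1}\phi$ that forces the value-minimizing $x_1^*$ to minimize the slice $\phi(\cdot,y_1^*)$ rather than only $\bar\phi$. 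The delicate point is that $\phi(\cdot,y_1)$ is quasiconvex-plus-convex (the quasiconvex $f+\psi_1(\cdot,y_1)$ plus the convex $\mathbb{E}[\psi_2(\cdot,y_1,\xi)]$) rather than convex or quasiconvex, and $R$ is cut out by nonconvex sign conditions on $\tilde f$, so a Sion-type or convex--concave minimax theorem is not directly applicable; I expect to carry out the swap on a suitable convex neighborhood or through first-order/subdifferential optimality conditions. Ensuring $\phi$ is well defined and $\bar\phi$ lsc under the measurability and integrability furnished by Assumptions~\ref{a:second-stage}--\ref{a:xicompact-regular} is routine by comparison, and the $\ell_0$ term, once isolated by the case split, enters only through its lower semicontinuity and integer jumps.
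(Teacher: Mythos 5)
For parts (i) and (ii) your argument is correct but genuinely different from the paper's. You invoke Lemma~\ref{l:2stage-bound-continuity} to get joint continuity of $f_{21}$, $f_{22}$ and $\psi_2$ in the decision variables and in $\xi$ (via the continuity of the solution mappings $\hat{{\bf y}}_2$, $\bar{{\bf x}}_2$, $\bar{{\bf y}}_2$ composed with the continuous $F_2$), conclude they are Carath\'eodory functions, and note that a Carath\'eodory function is simultaneously random lsc and random usc. That shortcut is legitimate under Assumptions~\ref{a:second-stage}--\ref{a:xicompact-regular}, since the proposition is stated under exactly the hypotheses that make Lemma~\ref{l:2stage-bound-continuity} available. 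The paper instead proves (i)--(ii) from scratch: it verifies measurability of $f_{21}(x_2,\cdot,\cdot)$ via the closed-valued measurable multifunction $\tilde{Y}_2$ and the random usc calculus of Shapiro et al.\ (Theorems 9.48--9.50), and establishes the lower/upper semicontinuity in $x_2$ and $y_1$ by direct $\liminf$/$\limsup$ estimates using a fixed optimal second-stage point. Your route is shorter but leans on the full strength of the implicit-function machinery behind Lemma~\ref{l:2stage-bound-continuity} (full row rank of $B(\xi)$, $W(\xi)$ and strong convexity-concavity); the paper's semicontinuity argument never needs joint continuity and would survive weakening of those regularity conditions. One small point of care: Lemma~\ref{l:2stage-bound-continuity} states $\xi$-continuity of the gradients and of the solution/multiplier mappings, not of the value functions themselves, so you should record explicitly the one-line composition argument $f_{21}(x_2,y_1,\xi)=F_2(x_2,\hat{{\bf y}}_2(x_2,y_1,\xi),\xi)$ (and its analogues) before claiming the Carath\'eodory property.

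For part (iii) there is a genuine gap, and you have in fact identified it yourself. Your construction takes $x_1^*$ as a global minimizer of $\bar{\psi}_\lambda(x_1)=\max_{y_1\in Y_1}\psi_\lambda(x_1,y_1)$ and $y_1^*$ as the unique inner maximizer; this makes $(x_1^*,y_1^*)$ a global minimax point, not a saddle point. The left saddle inequality and the penalty-jump case of the right inequality go through as you describe (the integer-valued jump of $\lambda\|\tilde{f}(\cdot)_+\|_0$ dominated by lower semicontinuity of $\phi(\cdot,y_1^*)$ is fine). But on the constant-penalty region the minimization property of $x_1^*$ only yields $\bar{\phi}(x_1)\geq\bar{\phi}(x_1^*)=\phi(x_1^*,y_1^*)$, while what is needed is $\phi(x_1,y_1^*)\geq\phi(x_1^*,y_1^*)$, and $\bar{\phi}(x_1)\geq\phi(x_1,y_1^*)$ points in the wrong direction. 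Closing this gap is not a technicality: for nonconvex first-stage objectives, minimax points are in general not local saddle points, and as you correctly observe, $\phi(\cdot,y_1)=f(\cdot)+\psi_1(\cdot,y_1)+\bbe[\psi_2(\cdot,y_1,\xi)]$ is quasi-convex plus convex, which is not quasi-convex, so Sion's theorem does not apply directly to the slice argument you sketch; your closing sentence (``I expect to carry out the swap \dots'') leaves precisely the essential step unproven. The paper avoids this entirely: its proof of (iii) is a one-line citation of \cite[Proposition 2.2]{bian2024nonsmooth}, which is exactly the result engineered for the cardinality-penalized minimax structure with a quasi-convex smooth part and strongly concave $y$-component; compare also how Proposition~\ref{p:xy1212}(ii)--(iii) handles the unpenalized saddle point existence before adding the $\ell_0$ term. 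To complete your write-up you would either need to import that result or supply the missing argument that, under the quasi-convexity hypothesis, the value-minimizing $x_1^*$ also minimizes the slice $\phi(\cdot,y_1^*)$ locally on the constant-penalty region.
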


\begin{proof}
(i) Let $\tilde{Y}_2(\xi):=\{(y_1, y_2)\in Y_1\times \mathbb{R}^{m_2}: A(\xi)y_1+B(\xi)y_2\leq c(\xi)\}$.
From Assumption \ref{a:xicompact-regular}, if $\{\xi^k\}\subset \Xi$, $\xi^k\to\bar{\xi}\in \Xi$, $(y_1^k, y_2^k)\in \tilde{Y}_2(\xi^k)$ and $(y_1^k, y_2^k)\to(\bar{y}_1, \bar{y}_2)$, then $(\bar{y}_1, \bar{y}_2)\in \tilde{Y}_2(\bar{\xi})$, which implies $\tilde{Y}_2$ is closed. By \cite[Remark 62]{shapiro2021lectures}, $\tilde{Y}_2$ is a closed valued measurable multifunction.
Moreover, since $F_2$ is a Carath\'eodory function,  for given $x_2$, by \cite[Comments after Theorem 9.49]{shapiro2021lectures}, $F_2(x_2, \cdot, \cdot)+ \underline{\delta}_{\tilde{Y}_2(\xi)}(\cdot, \cdot)$ is a random usc function, where $\underline{\delta}_{\tilde{Y}_2(\xi)}(y_1, y_2)=0$ if $(y_1, y_2)\in \tilde{Y}_2(\xi)$ and $\underline{\delta}_{\tilde{Y}_2(\xi)}(y_1, y_2)=-\infty$ otherwise. Then by \cite[Theorem 9.49]{shapiro2021lectures}, $f_{21}(x_2,  \cdot, \cdot)$ is measurable.

Note that by Lemma \ref{l:2stage-bound-continuity} (iv), $\dom (F_2(x_2, \cdot, \xi) + \underline{\delta}_{Y_2(y_1, \xi)\cap\check{Y}_2}(\cdot))$ is nonempty and bounded for a.e. $\xi\in\Xi$,
according to \cite[Theorem 9.50]{shapiro2021lectures}, it follows that $ f_{21}(x_2,  \cdot, \cdot)$ is  random usc  for all $x_2\in\hat{X}_2$.

Then, we show that for any given $y_1\in Y_1$, $ f_{21}(\cdot,  y_1, \cdot)$ is random lsc. Since $F_2(\cdot, y_2, \cdot)$ is random lsc for given $y_2$. Let $\bar{x}_2\in \mathbb{R}^{n_2}$, $\{x_2^k\}\subset \mathbb{R}^{n_2}$ with $x_2^k\to\bar{x}_2$ as $k\to\infty$ and $y_2^1=\arg\max_{y_2\in Y_2(y_1, \xi)}F_2(\bar{x}_2, y_2, \xi)$, then we have
$$
\begin{array}{lll}
\displaystyle{\liminf_{k\to\infty} f_{21}(x_2^k, y_1, \xi)} &=&\displaystyle{\liminf_{k\to\infty}\max_{y_2\in Y_2(y_1, \xi)}F_{2}(x^k_2,  y_2, \xi)}\\
&\geq&\displaystyle{\liminf_{k\to\infty} F_{2}(x_2^k,  y^1_2, \xi)}\\
&\geq&F_{2}(\bar{x}_2,  y^1_2, \xi)\\
&=& f_{21}(\bar{x}_2,  y_1, \xi),
\end{array}
$$
which implies $ f_{21}$ is lsc w.r.t. $x_2$.

Applying \cite[Theorem 9.49]{shapiro2021lectures} to $\max_{y_2\in Y_2(y_1, \xi)}F_2(x_2, y_2, \xi)$, the optimal value function $f_{21}(\cdot, y_1, \cdot)$ is jointly measurable. Combining the measurability of $f_{21}(\cdot, y_1, \cdot)$  and the lower semicontinuity of $ f_{21}$ w.r.t. $x_2$, by \cite[Theorem 9.48]{shapiro2021lectures}, $ f_{21}(\cdot, y_1, \cdot)$ is random lsc for any given $y_1\in Y_1$. Then  $f_{21}$ is  random lower-upper semicontinuous on $\hat{X}_2\times Y_1$.

The proof for $f_{22}$ follows a similar argument as for $f_{21}$ and will be omitted here.

 (ii) 
Note that $ f_{21}(x_2,  \cdot, \cdot)$ is  random usc.
Let $\bar{y}_1\in Y_1$, $\{y_1^k\}\subset Y_1$ with $y_1^k\to \bar{y}_1$ as $k\to\infty$ and $x_2^1\in \argmin_{x_2\in X_2(x_1, \xi)} f_{21}(x_2,  \bar{y}_1, \xi)$. Then
$$
\begin{array}{lll}
\displaystyle{\limsup_{k\to\infty}\psi_2(x_1, y^k_1, \xi)} &=&\displaystyle{\limsup_{k\to\infty}\min_{x_2\in X_2(x_1, \xi)} f_{21}(x_2,  y^k_1, \xi)}\\
&\leq&\displaystyle{\limsup_{k\to\infty} f_{21}(x_2^1,  y^k_1, \xi)}\\
&\leq& f_{21}(x_2^1,  \bar{y}_1, \xi)\\
&=&\psi_2(x_1,\bar{y}_1, \xi),
\end{array}
$$
which implies that $\psi_2$ is usc w.r.t. $y_1$ for given $x_1$ and $\xi$.

Applying \cite[Theorem 9.49]{shapiro2021lectures} to problem \eqref{eq:bbfmin}, the minimax value function $\psi_2(x_1, \cdot, \cdot)$ is jointly measurable. Combining the measurability of $\psi_2(x_1, \cdot, \cdot)$ and the upper semicontinuity of $\psi_2$ w.r.t. $y_1$, by \cite[Theorem 9.48]{shapiro2021lectures}, $\psi_2(x_1, \cdot, \cdot)$ is random usc for any given $x_1\in X_1$.

Similar to the random upper semicontinuity of $\psi_2$ w.r.t. $y_1$ for any given $x_1\in X_1$, we can prove the random lower semicontinuity of $\psi_2$ w.r.t. $x_1$ for any given $y_1\in Y_1$. Then $\psi_2$ is random lower-upper semicontinuous w.r.t. $(x_1, y_1)$ on $X_1\times Y_1$. We omit the details here.


 Finally,  part (iii) is a corollary of \cite[Proposition 2.2]{bian2024nonsmooth}.
 \hfill $\square$
\end{proof}

\begin{remark}\label{r:strong-bounded}
Note that the second-stage minimax problem in \eqref{eq:st-minimax}  is a parametric minimax problem. In Lemma \ref{l:2stage-bound-continuity}, we investigate the continuity and the boundedness of the second-stage saddle point mapping $(\bar{{\bf x}}_2, \bar{{\bf y}}_2)$. In Proposition~\ref{p:objv}, we consider the random lower-upper semicontinuity of the minimax value function $\psi_2$ of the second-stage minimax problem in  \eqref{eq:st-minimax}.

Let
\begin{equation}\label{eq:ts-minimax-check}
\min_{x_1\in X_1} \max_{y_1\in Y_1} \; F_1(x_1, y_1) + \bbe\left[\hat{\psi}_2(x_1, y_1, \xi)\right]
\end{equation}
and
\begin{equation}\label{eq:st-minimax-check}
\hat{\psi}_2(x_1, y_1, \xi):=\min_{x_2\in X_2(x_1,\xi)\cap\check{X}_2}\max_{y_2\in Y_2(y_1, \xi)\cap\check{Y}_2}F_2(x_2, y_2, \xi),
\end{equation}
where $\check{X}_2$ and $\check{Y}_2$ are given in \eqref{eq:f21check}-\eqref{eq:f22check}.
By Lemma \ref{l:2stage-bound-continuity} and Proposition~\ref{p:objv}, it is easy to show the equivalence between \eqref{eq:ts-minimax}-\eqref{eq:st-minimax} and \eqref{eq:ts-minimax-check}-\eqref{eq:st-minimax-check} in the sense that: (i) for every $(x_1, y_1)\in X_1\times Y_1$ they have the same and unique saddle point $(\bar{{\bf x}}_2(x_1, y_1, \xi), \bar{{\bf y}}_2(x_1, y_1, \xi))$, and \eqref{eq:f21check}-\eqref{eq:f22check} hold; (ii)
they have same sets of points $(x_1^*, y_1^*)$ satisfying \eqref{eq:saddle21}.
\end{remark}

\subsection{Interchange of the
expectation and the minimax operator}\label{sec:31}

Two-stage stochastic minimization problem has been considered as an infinite-dimensional large-scale optimization problem \cite[Section 2.3.1]{shapiro2021lectures}. Can the two-stage stochastic minimax problem \eqref{eq:ts-minimax}-\eqref{eq:st-minimax}  also be treated as an infinite-dimensional large-scale minimax problem \eqref{eq:ts-minimax-interchange}? To answer this question, we  examine whether the expectation and the minimax operator in the second-stage problem can be interchanged.

\begin{proposition}\label{p:xy1212}
Under Assumptions \ref{a:second-stage}-\ref{a:xicompact-regular}, the two-stage stochastic minimax problem \eqref{eq:ts-minimax}-\eqref{eq:st-minimax} is equivalent to the minimax problem \eqref{eq:ts-minimax-interchange}.  
Moreover, the following statements hold.
\begin{itemize}

\item[(i)] If $(x_1^*, y_1^*)$ is a (local) saddle point of \eqref{eq:ts-minimax}, then $(x_1^*, {\bf x}_2^*, y_1^*, {\bf y}_2^*)$ is a (local) saddle point of \eqref{eq:ts-minimax-interchange}. If $(x_1^*, {\bf x}_2^*, y_1^*, {\bf y}_2^*)$ is a (local)saddle point of \eqref{eq:ts-minimax-interchange}, then $(x_1^*, y_1^*)$ is a (local) saddle point of \eqref{eq:ts-minimax}.

\item[(ii)]   If $f(\cdot) + \psi_1(\cdot, y_1)$ is quasi-convex over $X_1$ for any given $y_1\in Y_1$, then there exists a saddle point $(x_1^*, {\bf x}_2^*,y_1^*, {\bf y}_2^*)$ of \eqref{eq:ts-minimax-interchange}.

\item[(iii)]  In addition to (ii),
if 
$\lambda>0$
and $\tilde{f}$ is a continuously differentiable vector-valued function, then there exists a  local saddle point $(x_1^*, {\bf x}_2^*,y_1^*, {\bf y}_2^*)$ of
\begin{equation*}\label{eq:ts-minimax-interchange-hat}
\min_{(x_1,  {\bf x}_2)\in {\bf X}} \max_{(y_1, {\bf y}_2)\in {\bf Y}} \; F_1(x_1, y_1) + \lambda\|\tilde{f}(x_1)_+\|_0 + \bbe\left[F_2({\bf x}_2(\xi), {\bf y}_2(\xi), \xi)\right].
\end{equation*}
\end{itemize}

\end{proposition}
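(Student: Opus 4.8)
The plan is to make the interchangeability principle for integral functionals the engine of the whole proof. First I would prove the asserted equivalence at the level of objective values: for every fixed $(x_1,y_1)\in X_1\times Y_1$, the expected recourse value $\mathbb{E}[\psi_2(x_1,y_1,\xi)]$ equals the fibered min-max $\min_{{\bf x}_2}\max_{{\bf y}_2}\mathbb{E}[F_2({\bf x}_2(\xi),{\bf y}_2(\xi),\xi)]$, the extrema being over measurable selections with ${\bf x}_2(\xi)\in X_2(x_1,\xi)$ and ${\bf y}_2(\xi)\in Y_2(y_1,\xi)$ a.e. Using Remark \ref{r:strong-bounded} to restrict to the compact sets $\check{X}_2,\check{Y}_2$, I would argue in two steps: apply the interchangeability theorem to the inner maximization (valid since $f_{21}$ is random upper semicontinuous by Proposition \ref{p:objv}(i)), which replaces $\max_{{\bf y}_2}\mathbb{E}[F_2]$ by $\mathbb{E}[f_{21}({\bf x}_2(\xi),y_1,\xi)]$; then apply it to the outer minimization (valid since $f_{21}(\cdot,y_1,\cdot)$ and $\psi_2$ are random lower semicontinuous by Proposition \ref{p:objv}(i)--(ii)), which replaces $\min_{{\bf x}_2}\mathbb{E}[f_{21}]$ by $\mathbb{E}[\psi_2(x_1,y_1,\xi)]$. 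Since the term $F_1(x_1,y_1)$ is untouched, the objectives of \eqref{eq:ts-minimax}--\eqref{eq:st-minimax} and of \eqref{eq:ts-minimax-interchange} agree pointwise, giving the equivalence.

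For the forward direction of part (i), I would augment $(x_1^*,y_1^*)$ by the unique second-stage saddle mappings of Lemma \ref{l:2stage-bound-continuity}(iii), taking ${\bf x}_2^*(\cdot)=\bar{\bf x}_2(x_1^*,y_1^*,\cdot)$ and ${\bf y}_2^*(\cdot)=\bar{\bf y}_2(x_1^*,y_1^*,\cdot)$; then, writing $\Phi$ for the objective of \eqref{eq:ts-minimax-interchange}, one has $\Phi(x_1^*,{\bf x}_2^*,y_1^*,{\bf y}_2^*)=\psi(x_1^*,y_1^*)$ since $F_2(\bar{\bf x}_2,\bar{\bf y}_2,\xi)=\psi_2(x_1^*,y_1^*,\xi)$ a.e. To verify \eqref{eq:twostagesaddlepoint} I would reuse the two interchange reductions. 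The right-hand maximization over $(y_1,{\bf y}_2)$ collapses to maximizing the strongly concave function $\phi(y_1):=F_1(x_1^*,y_1)+\mathbb{E}[f_{21}({\bf x}_2^*(\xi),y_1,\xi)]$ over $Y_1$; the Danskin identity $\nabla_{y_1}\psi_2(x_1^*,y_1^*,\xi)=\nabla_{y_1}f_{21}({\bf x}_2^*(\xi),y_1^*,\xi)$ (the inner minimizer at $y_1^*$ is exactly ${\bf x}_2^*$) gives $\nabla\phi(y_1^*)=\nabla_{y_1}\psi(x_1^*,y_1^*)$, so the first-order maximality of $y_1^*$ for $\psi(x_1^*,\cdot)$ plus strong concavity makes $y_1^*$ a global maximizer of $\phi$, while uniqueness of the pointwise maximizer forces the optimal selection to be ${\bf y}_2^*$.

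The main obstacle is the symmetric left-hand minimization, which collapses to minimizing $\eta(x_1):=F_1(x_1,y_1^*)+\mathbb{E}[f_{22}(x_1,{\bf y}_2^*(\xi),\xi)]$. The envelope identity $\nabla_{x_1}f_{22}(x_1^*,{\bf y}_2^*(\xi),\xi)=\nabla_{x_1}\psi_2(x_1^*,y_1^*,\xi)=T(\xi)^\top{\bm\pi}_{x_2}(x_1^*,y_1^*,\xi)$ from Lemma \ref{l:2stage-bound-continuity}(iii) shows that $\eta$ and $\psi(\cdot,y_1^*)$ have identical stationarity at $x_1^*$ and that $\eta\le\psi(\cdot,y_1^*)$ with equality at $x_1^*$ (because ${\bf y}_2^*(\xi)\in Y_2(y_1^*,\xi)$ forces $f_{22}(x_1,{\bf y}_2^*(\xi),\xi)\le\psi_2(x_1,y_1^*,\xi)$). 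The difficulty is that $\eta$ inherits the nonconvexity of $f+\psi_1(\cdot,y_1^*)$, so shared stationarity does not by itself upgrade to minimality; for the local statement I would invoke the local minimality of $x_1^*$ for $\psi(\cdot,y_1^*)$, and for the global statement invoked in part (ii) the quasi-convexity hypothesis is exactly what promotes the common stationary point to a global minimizer. The reverse direction inverts the construction: the inner pair of inequalities in \eqref{eq:twostagesaddlepoint} forces $({\bf x}_2^*(\xi),{\bf y}_2^*(\xi))$ to be the a.e. unique second-stage saddle of Lemma \ref{l:2stage-bound-continuity}(iii), after which the interchange reductions run backwards to recover \eqref{eq:saddle21}.

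Finally, parts (ii) and (iii) reduce to part (i) together with an existence result for the first-stage problem. For (ii) I would first produce a saddle point of $\psi$ itself: $\psi(x_1,\cdot)$ is strongly concave and upper semicontinuous, $\psi(\cdot,y_1)$ is lower semicontinuous by Proposition \ref{p:objv}(ii) and falls within the quasi-convex class through the quasi-convexity of $f+\psi_1(\cdot,y_1)$ and the convexity of the recourse term $\mathbb{E}[\psi_2(\cdot,y_1,\xi)]$ from Lemma \ref{l:2stage-bound-continuity}(iii), and $X_1,Y_1$ are convex and compact; a Sion-type minimax theorem then yields the minimax $=$ maximin equality with attainment, hence a saddle point $(x_1^*,y_1^*)$, which the forward direction of (i) lifts to \eqref{eq:ts-minimax-interchange}. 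For (iii) I would apply Proposition \ref{p:objv}(iii) to obtain a local saddle point of the $\ell_0$-penalized first-stage problem \eqref{eq:ts-minimax-0}, and then apply the local forward direction of (i) to the penalized objective; since the penalty $\lambda\|\tilde f(x_1)_+\|_0$ depends only on $x_1$, it rides unchanged through every interchange step.
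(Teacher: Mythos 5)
Your interchange argument for the equivalence, and your Danskin-plus-strong-concavity verification of the max-side inequality in part (i), are sound and consistent with the paper (the latter is actually a nice supplement, since it pins down why the $y_1$-component of the saddle point of \eqref{eq:ts-minimax-interchange} can be taken to be the given $y_1^*$). But your proof of part (i) has a genuine gap on the min side, and it is precisely the gap you flag yourself. Part (i) is asserted under Assumptions \ref{a:second-stage}--\ref{a:xicompact-regular} alone, with no quasi-convexity; yet your plan closes the global min-side inequality only by importing the quasi-convexity hypothesis of part (ii), and your fallback for the local case does not work. Since $\eta(x_1):=F_1(x_1,y_1^*)+\mathbb{E}[f_{22}(x_1,{\bf y}_2^*(\xi),\xi)]$ \emph{minorizes} $\psi(\cdot,y_1^*)$ (because $f_{22}(x_1,{\bf y}_2^*(\xi),\xi)\leq\max_{y_2\in Y_2(y_1^*,\xi)}f_{22}(x_1,y_2,\xi)=\psi_2(x_1,y_1^*,\xi)$) with equality at $x_1^*$, (local) minimality of the \emph{majorant} $\psi(\cdot,y_1^*)$ at the touching point tells you nothing about $\eta$: when $f+\psi_1(\cdot,y_1^*)$ is nonconvex, the minorant can dip strictly below $\eta(x_1^*)$ arbitrarily nearby. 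Strong convexity of $F_2(\cdot,y_2,\xi)$ only upgrades the bound to the linearization $\mathbb{E}[\psi_2(x_1^*,y_1^*,\xi)]+\langle\nabla_{x_1}\mathbb{E}[\psi_2(x_1^*,y_1^*,\xi)],x_1-x_1^*\rangle$, which by convexity of $\mathbb{E}[\psi_2(\cdot,y_1^*,\xi)]$ still sits \emph{below} $\mathbb{E}[\psi_2(x_1,y_1^*,\xi)]$; so neither shared stationarity nor (local) minimality of $\psi(\cdot,y_1^*)$ delivers the required inequality $\psi(x_1^*,y_1^*)\leq F_1(x_1,y_1^*)+\mathbb{E}[F_2({\bf x}_2(\xi),{\bf y}_2^*(\xi),\xi)]$. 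A similar caution applies to your part (ii) mechanism: even under quasi-convexity of $f+\psi_1(\cdot,y_1)$, the sum $\eta$ of a quasi-convex and a convex function need not be quasi-convex, and stationary points of quasi-convex functions need not be global minimizers, so ``quasi-convexity promotes the common stationary point to a global minimizer'' is not a valid step as stated.

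The paper avoids pointwise verification of the min side altogether: its proof of (i) is value-based. The mere \emph{existence} of the saddle point $(x_1^*,y_1^*)$ of $\psi$ yields the equality $\min_{x_1}\max_{y_1}\psi=\max_{y_1}\min_{x_1}\psi$ with common value $\psi(x_1^*,y_1^*)$ by \cite[Theorem 1.4.1]{facchinei2003finite} --- no convexity needed --- and the interchange argument transports this equality to \eqref{eq:ts-minimax-interchange}, giving the chain \eqref{eq:saddlepoint1-eq}. The paper then shows that the sup-function $z_1(x_1,{\bf x}_2)=\max_{(y_1,{\bf y}_2)\in\bar{\bf Y}}\{F_1(x_1,y_1)+\mathbb{E}[F_2({\bf x}_2(\xi),{\bf y}_2(\xi),\xi)]\}$ attains its minimum (continuity of $z_1$ via weak compactness of $\bar{\bf Y}$ and \cite[Proposition 4.4]{bonnans2013perturbation}, together with compactness of $X_1$ and strong convexity of $f_{21}$ in $x_2$), and symmetrically that the inf-function $z_2$ attains its maximum, and assembles the saddle point of \eqref{eq:ts-minimax-interchange} as an argmin of $z_1$ paired with an argmax of $z_2$, as in \eqref{eq:saddlepoint2}--\eqref{eq:saddlepoint3}; the left-hand inequality you are stuck on is then a \emph{consequence} of the value equality rather than something to be checked directly. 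The same mechanism drives part (ii), where Sion's theorem \cite{Sion58} supplies the minimax--maximin equality for $\psi$ in the absence of a given saddle point, and part (iii) then follows from \cite[Proposition 2.2]{bian2024nonsmooth} exactly as you say. Without this value-equality device, your direct-verification route leaves part (i) --- and hence the lift in part (ii) that relies on it --- incomplete as written.
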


\begin{proof}
For any given $(x_1, y_1)\in X_1\times Y_1$, we consider the expectation of the second-stage problem
\begin{equation}\label{eq:E-minimax1}
\bbe\left[\min_{ x_2\in X_2(x_1, \xi)}\max_{y_2\in Y_2(y_1, \xi)}F_2(x_2, y_2, \xi)\right]
\end{equation}
firstly. Note that by Remark \ref{r:strong-bounded}, \eqref{eq:E-minimax1} is equivalent to
\begin{equation}\label{eq:E-minimax}
\bbe\left[\min_{ x_2\in X_2(x_1, \xi)\cap \check{X}_2}\max_{y_2\in Y_2(y_1, \xi)\cap\check{Y}_2}F_2(x_2, y_2, \xi)\right].
\end{equation}
By Proposition \ref{p:objv} (i),   $\displaystyle{\max_{ y_2\in Y_2(y_1, \xi)\cap\check{Y}}F_2(\cdot,  y_2, \xi)}$ is random lsc over $\hat{X}_2$. Then by  \cite[Theorem 9.108]{shapiro2021lectures}, \eqref{eq:E-minimax} is equivalent to
\begin{equation}\label{eq:min-Emax}
\min_{{\bf x_2}\in \tilde{X}_2(x_1)}\bbe\left[\max_{y_2\in Y_2(y_1, \xi)\cap\check{Y}_2}F_2({\bf x}_2(\xi),  y_2, \xi)\right],
\end{equation}
where $\tilde{X}_2(x_1):=\left\{{\bf x}_2\in\mathcal{L}^{n_2}_p: {\bf x}_2(\xi)\in\hat{X}_2,  T(\xi)x_1+W(\xi){\bf x}_2(\xi)\leq h(\xi), \;a.e.\; \xi\in\Xi\right\}$.
Moreover, by Lemma \ref{l:2stage-bound-continuity} (iii), the optimal solution function $\bar{{\bf x}}_2(x_1, y_1, \cdot)$ of problem \eqref{eq:min-Emax} is continuous. Then, since $F_2(x_2,  \cdot, \xi)$ is random usc, we have $F_2(\bar{{\bf x}}_2(x_1, y_1, \xi),  \cdot, \xi)$ is random usc, and  by   \cite[Theorem 9.108]{shapiro2021lectures}, \eqref{eq:min-Emax} is equivalent to
\begin{equation}\label{eq:min-max-E}
\min_{{\bf x_2}\in \tilde{X}_2(x_1)}\max_{{\bf y}_2(\xi)\in \tilde{Y}_2(y_1)}\bbe\left[F_2({\bf x}_2(\xi), {\bf y}_2(\xi), \xi)\right],
\end{equation}
where $\tilde{Y}_2(y_1):=\left\{{\bf y}_2\in\mathcal{L}^{m_2}_p:  {\bf y}_2(\xi)\in\hat{Y}_2, A(\xi)y_1+B(\xi){\bf y}_2(\xi)\leq c(\xi), \;a.e.\; \xi\in\Xi\right\}$.
Then for any $(x_1, y_1)\in X_1\times Y_1$,
$$
\bbe\left[\min_{x_2\in X_2(x_1, \xi)}\max_{y_2\in Y_2(y_1, \xi)}F_2(x_2, y_2, \xi)\right]=\min_{{\bf x_2}\in \tilde{X}_2(x_1)}\max_{{\bf y}_2(\xi)\in \tilde{Y}_2(y_1)}\bbe\left[F_2({\bf x}_2(\xi), {\bf y}_2(\xi), \xi)\right].
$$
Given that Lemma \ref{l:2stage-bound-continuity}(iv) ensures
 $(\bar{{\bf x}}_2(x_1, y_1, \xi), \bar{{\bf y}}_2(x_1, y_1, \xi))\in\hat{X}_2\times \hat{Y}_2\subset\check{X}_2\times \check{Y}_2$ for all $(x_1, y_1, \xi)\in X_1\times Y_1\times \Xi$, it follows that the two-stage stochastic minimax problem
\eqref{eq:ts-minimax}-\eqref{eq:st-minimax} is equivalent to \eqref{eq:ts-minimax-interchange}.

Then we consider (i).  We only show the forward implication, since the backward implication follows the forward implication immediately.
Since $(x_1^*, y_1^*)$ is a saddle point of \eqref{eq:ts-minimax},
\begin{equation}\label{eq:saddlepoint1-eq}
\begin{array}{lll}
&&\displaystyle{\min_{(x_1,  {\bf x}_2)\in {\bf X}} \max_{(y_1, {\bf y}_2)\in {\bf Y}}} F_1(x_1, y_1) + \bbe\left[F_2({\bf x}_2(\xi), {\bf y}_2(\xi), \xi)\right]\\
 &=& \displaystyle{\min_{x_1\in X_1} \max_{y_1\in Y_1}} F_1(x_1, y_1) + \bbe\left[\psi_2(x_1, y_1, \xi)\right]\\
  &=&F_1(x^*_1, y^*_1) + \bbe\left[\psi_2(x^*_1, y^*_1, \xi)\right]\\
&=& \displaystyle{\max_{y_1\in Y_1} \min_{x_1\in X_1}} F_1(x_1, y_1) + \bbe\left[\psi_2(x_1, y_1, \xi)\right] \\
&=& \displaystyle{\max_{(y_1, {\bf y}_2)\in {\bf Y}} \min_{(x_1,  {\bf x}_2)\in {\bf X}}} F_1(x_1, y_1) + \bbe\left[F_2({\bf x}_2(\xi), {\bf y}_2(\xi), \xi)\right],
\end{array}
\end{equation}
where the second and third equalities are from \cite[Theorem 1.4.1]{facchinei2003finite}, and the last equality follows from a similar argument as the equivalence between \eqref{eq:ts-minimax}-\eqref{eq:st-minimax}  and \eqref{eq:ts-minimax-interchange}.

By Lemma \ref{l:2stage-bound-continuity} (iv)
\[
\begin{array}{lll}
z_1(x_1,{\bf x}_2):&=&\max_{y_1\in Y_1} F_1(x_1, y_1) + \bbe[ f_{21}({\bf x}_2(\xi), y_1, \xi)]\\
&=& \max_{(y_1, {\bf y}_2)\in \bar{{\bf Y}}} \; F_1(x_1, y_1) + \bbe\left[F_2({\bf x}_2(\xi), {\bf y}_2(\xi), \xi)\right],
\end{array}
\]
where
\begin{multline*}
\bar{{\bf Y}}:=\{(y_1, {\bf y}_2)\in\mathbb{R}^{m_1}\times\mathcal{L}^{m_2}_p: y_1\in Y_1, {\bf y}_2(\xi)\in \check{Y}_2, \\
A(\xi)y_1+B(\xi){\bf y}_2(\xi)\leq c(\xi), \;a.e.\; \xi\in\Xi\}.
\end{multline*}
Note that $Y_1$ and  $\check{Y}_2$ are compact, $\bar{{\bf Y}}$ is weakly compact. Then by \cite[Proposition 4.4]{bonnans2013perturbation}, $z_1$ is continuous w.r.t. $(x_1,{\bf x}_2)$ over ${\bf X}$. Moreover, since $X_1$ is compact and by Lemma \ref{l:2stage-bound-continuity} (i) that $f_{21}(\cdot, \cdot, \xi)$ is strongly convex-concave for any given $\xi\in\Xi$,
there exists $(x_1^*, {\bf x}_2^*)\in {\bf X}$ such that
\begin{equation}\label{eq:saddlepoint2}
\min_{(x_1,  {\bf x}_2)\in {\bf X}} z_1(x_1,{\bf x}_2)=z_1(x_1^*, {\bf x}_2^*)\\
= \max_{(y_1, {\bf y}_2)\in {\bf Y}}F_1(x_1^*, y_1) + \bbe\left[F_2({\bf x}_2^*(\xi), {\bf y}_2(\xi), \xi)\right].
\end{equation}
  Similarly,  there exists $(y_1^*, {\bf y}_2^*)\in {\bf Y}$ such that
 \begin{equation}\label{eq:saddlepoint3}
 \max_{(y_1,  {\bf y}_2)\in {\bf Y}} z_2(y_1,{\bf y}_2)=z_2(y_1^*, {\bf y}_2^*)= \min_{(x_1,  {\bf x}_2)\in {\bf X}}F_1(x_1, y_1^*) + \bbe\left[F_2({\bf x}_2(\xi), {\bf y}^*_2(\xi), \xi)\right],
 \end{equation}
 where
 $z_2(y_1, {\bf y}_2):=\min_{x_1\in X_1} F_1(x_1, y_1) + \bbe[ f_{22}(x_1, {\bf y}_2(\xi), \xi)]$ is continuous w.r.t. $(y_1, {\bf y}_2)$.
Combining \eqref{eq:saddlepoint2}-\eqref{eq:saddlepoint3}, we have
\[
\begin{array}{lll}
&&\displaystyle{\max_{(y_1, {\bf y}_2)\in {\bf Y}}}F_1(x_1^*, y_1) + \bbe\left[F_2({\bf x}_2^*(\xi), {\bf y}_2(\xi), \xi)\right]\\
&=&F_1(x_1^*, y_1^*) + \bbe\left[F_2({\bf x}_2^*(\xi), {\bf y}_2^*(\xi), \xi)\right]\\
&=&\displaystyle{\min_{(x_1,  {\bf x}_2)\in {\bf X}}}F_1(x_1, y_1^*) + \bbe\left[F_2({\bf x}_2(\xi), {\bf y}_2^*(\xi), \xi)\right],
\end{array}
\]
which implies that $(x_1^*, {\bf x}_2^*,y_1^*, {\bf y}_2^*)$ is a saddle point.

For a local saddle point $(x_1^*, y_1^*)$  of \eqref{eq:ts-minimax}, the strong convexity-strong concavity of $F_2$  w.r.t. $(x_2, y_2)$ ensures that the local saddle point $(\bar{{\bf x}}(x_1^*, y_1^*, \xi), \bar{{\bf y}}(x_1^*, y_1^*, \xi))$ of the second-stage minimax problem in \eqref{eq:st-minimax} is also the unique saddle point. Then let $({\bf x}_2^*, {\bf y}_2^*):=(\bar{{\bf x}}(x_1^*, y_1^*, \cdot), \bar{{\bf y}}(x_1^*, y_1^*, \cdot))$, and following the above argument, $(x_1^*, {\bf x}_2^*,y_1^*, {\bf y}_2^*)$ is a local saddle point of \eqref{eq:ts-minimax-interchange}, and vice versa.

With part (i), we consider part (ii),
the existence of a saddle point. Note that since $f(\cdot) + \psi_1(\cdot, y_1)$ is quasi-convex over $X_1$ for any given $y_1\in Y_1$, $F_1$ is quasi-convex w.r.t. $x_1$, and then
\begin{equation*}\label{eq:saddlepoint1}
\begin{array}{lll}
&&\displaystyle{\min_{(x_1,  {\bf x}_2)\in {\bf X}} \max_{(y_1, {\bf y}_2)\in {\bf Y}}} F_1(x_1, y_1) + \bbe\left[F_2({\bf x}_2(\xi), {\bf y}_2(\xi), \xi)\right]\\
 &=& \displaystyle{\min_{x_1\in X_1} \max_{y_1\in Y_1}} F_1(x_1, y_1) + \bbe\left[\psi_2(x_1, y_1, \xi)\right]\\
&=& \displaystyle{\max_{y_1\in Y_1} \min_{x_1\in X_1}} F_1(x_1, y_1) + \bbe\left[\psi_2(x_1, y_1, \xi)\right] \\
&=& \displaystyle{\max_{(y_1, {\bf y}_2)\in {\bf Y}} \min_{(x_1,  {\bf x}_2)\in {\bf X}}} F_1(x_1, y_1) + \bbe\left[F_2({\bf x}_2(\xi), {\bf y}_2(\xi), \xi)\right],
\end{array}
\end{equation*}
where the second equality follows from Sion \cite{Sion58}, and the last equality follows from the similar argument as the equivalence between \eqref{eq:ts-minimax}-\eqref{eq:st-minimax}  and \eqref{eq:ts-minimax-interchange}.  Moreover, similar as the  argument after \eqref{eq:saddlepoint1-eq} in part (i),  $(x_1^*, {\bf x}_2^*,y_1^*, {\bf y}_2^*)$ is a saddle point of \eqref{eq:ts-minimax-interchange}.

Finally, part (iii) follows from part (ii) and  \cite[Proposition 2.2]{bian2024nonsmooth} directly.
 \hfill $\square$
\end{proof}

Note that under condition (ii)  of Proposition \ref{p:xy1212},  problem \eqref{eq:ts-minimax}-\eqref{eq:st-minimax} is equivalent to  two-stage stochastic two-player zero-sum game \eqref{eq:tzs-game1}-\eqref{eq:tzs-game2}.

\begin{remark}
We consider three kinds of saddle points for two-stage stochastic minimax problem \eqref{eq:ts-minimax}-\eqref{eq:st-minimax}: the saddle point $(x_1^*, y_1^*)$  of \eqref{eq:ts-minimax}, the saddle point mapping  $(\bar{{\bf x}}_2(x_1, y_1, \xi), \bar{{\bf y}}_2(x_1, y_1, \xi))$  of the second-stage minimax problem in  \eqref{eq:st-minimax} (see Definition \ref{d:saddle-mapping-2}), and the saddle point
$(x_1^*, {\bf x}_2^*,y_1^*, {\bf y}_2^*)$  of \eqref{eq:ts-minimax-interchange}.

Under Assumptions \ref{a:second-stage}-\ref{a:xicompact-regular},  by Proposition \ref{p:xy1212} (i), $(x_1^*, y_1^*)$ is a saddle point of \eqref{eq:ts-minimax}  if and only if $(x_1^*, {\bf x}_2^*,y_1^*, {\bf y}_2^*)$ is a saddle point of \eqref{eq:ts-minimax-interchange}, and by Lemma \ref{l:2stage-bound-continuity},
\[
(\bar{{\bf x}}_2(x^*_1, y^*_1, \cdot), \bar{{\bf y}}_2(x^*_1, y^*_1, \cdot)) = ( {\bf x}_2^*(\cdot),  {\bf y}_2^*(\cdot))
\]
 and $(\bar{{\bf x}}_2(\cdot, \cdot, \cdot), \bar{{\bf y}}_2(\cdot, \cdot, \cdot))$ is continuous.
\end{remark}

\begin{remark}
Note that we can not apply Proposition \ref{p:kkt} to the two-stage stochastic minimax problem with cardinality penalties \eqref{eq:ts-minimax-0} since $\lambda\|\tilde{f}(\cdot)_+\|_0$ is not lsc. To overcome this difficulty, we may consider the continuous relaxation $r(\tilde{f}(\cdot), \mu)$ of $\lambda\|\tilde{f}(\cdot)_+\|_0$, where $r(\cdot, \cdot)$ is defined in \cite[Formulation (3.3)]{bian2024nonsmooth}. Then we can apply Proposition \ref{p:kkt} to the continuous relaxation problem of \eqref{eq:ts-minimax-0} as follows:
\begin{equation}\label{eq:ts-minimax-tilde}
\min_{x_1\in X_1} \max_{y_1\in Y_1} \; \tilde{\psi}_\lambda(x_1, y_1): = F_1(x_1, y_1) +\lambda r(\tilde{f}(x_1), \mu)+ \bbe\left[\psi_2(x_1, y_1, \xi)\right].
\end{equation}
By \cite[Theorem 4.1]{bian2024nonsmooth}, under suitable conditions (e.g. \cite[Assumption 4.1]{bian2024nonsmooth}),
$(x_1^*, y_1^*)$ is a (local) saddle point of \eqref{eq:ts-minimax-0} if and only if it is a (local) saddle point of \eqref{eq:ts-minimax-tilde}.

\end{remark}

\subsection{Global and local minimax points}

Since  \eqref{eq:ts-minimax} is a nonconvex-nonsmooth minimax problem, the sets of  saddle points and local saddle points of \eqref{eq:ts-minimax} may be empty. We therefore consider the sets of global minimax points and local minimax points of \eqref{eq:ts-minimax}, denoted by $S_g$ and $S_l$, respectively.

\begin{proposition}\label{p:kkt}
Suppose that Assumptions \ref{a:second-stage}-\ref{a:xicompact-regular} hold. Then (a) $x_1^*$ is a stationary point of problem \eqref{eq:first-stage-min} if and only if  (b) there exists a corresponding $y_1^*$ such that $(x_1^*, y_1^*)$ is a solution of the following variational inequality: 
\begin{equation}\label{eq:kkt}
\left\{
\begin{aligned}
&0\in \partial f(x_1) + \nabla_{x_1}\psi_1(x_1, y_1) + \nabla_{x_1}\bbe\left[\psi_2(x_1, y_1, \xi)\right]+ \mathcal{N}_{X_1}(x_1),\\
&0\in -\nabla_{y_1}\psi_1(x_1, y_1) - \nabla_{y_1}\bbe\left[\psi_2(x_1, y_1, \xi)\right]+ \partial g(y_1)+ \mathcal{N}_{Y_1}(y_1).
\end{aligned}
\right.
\end{equation}
Moreover, (c) $S_g\subset S_l\subset S_{kkt}$ are nonempty, where $S_{kkt}$ is  the solution set of \eqref{eq:kkt}.
\end{proposition}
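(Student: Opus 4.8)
The plan is to reduce everything to the reduced function $\bar\psi(x_1)=\max_{y_1\in Y_1}\psi(x_1,y_1)$ and to exploit the strong concavity of $\psi(x_1,\cdot)$. First I would record two structural facts. By Lemma~\ref{l:2stage-bound-continuity}(iii), $\psi_2(\cdot,\cdot,\xi)$ is convex–concave and $C^1$ with $\nabla_{x_1}\psi_2,\nabla_{y_1}\psi_2$ Lipschitz in $(x_1,y_1)$ and continuous in $\xi$; together with the compactness of $\Xi$ and the boundedness of the saddle mapping in Lemma~\ref{l:2stage-bound-continuity}(iv), dominated convergence lets me interchange expectation and differentiation, so $\bbe[\psi_2(\cdot,\cdot,\xi)]$ is $C^1$ with $\nabla_{x_1}\bbe[\psi_2]=\bbe[\nabla_{x_1}\psi_2]$ and likewise in $y_1$. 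Since $g(\cdot)-\psi_1(x_1,\cdot)$ is $\sigma$-strongly convex and $\bbe[\psi_2(x_1,\cdot,\xi)]$ is concave, the map $\psi(x_1,\cdot)=f(x_1)+\psi_1(x_1,\cdot)-g(\cdot)+\bbe[\psi_2(x_1,\cdot,\xi)]$ is $\sigma$-strongly concave on $Y_1$, hence has a unique maximizer $y_1(x_1)$, which depends continuously on $x_1$ by the usual strong-monotonicity/quadratic-growth argument.

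Next I would treat $\bar\psi$. Writing $\bar\psi(x_1)=f(x_1)+\Phi(x_1)$ with $\Phi(x_1):=\max_{y_1\in Y_1}[\psi_1(x_1,y_1)-g(y_1)+\bbe[\psi_2(x_1,y_1,\xi)]]$, the integrand is $C^1$ in $x_1$ with jointly continuous $x_1$-gradient and the maximizer is unique, so Danskin's theorem applies in the $x_1$ variable (the nonsmoothness of $g$ in $y_1$ is irrelevant there): $\Phi$ is $C^1$ with $\nabla\Phi(x_1)=\nabla_{x_1}\psi_1(x_1,y_1(x_1))+\nabla_{x_1}\bbe[\psi_2(x_1,y_1(x_1),\xi)]$. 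The exact sum rule for a proper lsc function plus a $C^1$ function then gives $\partial\bar\psi(x_1)=\partial f(x_1)+\nabla\Phi(x_1)$.

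For the equivalence (a)$\Leftrightarrow$(b): in the forward direction I set $y_1^*:=y_1(x_1^*)$; the stationarity $0\in\partial\bar\psi(x_1^*)+\mathcal N_{X_1}(x_1^*)$ becomes exactly the first line of \eqref{eq:kkt} after substituting $\nabla\Phi(x_1^*)$, while optimality of $y_1^*$ for $\max_{y_1\in Y_1}\psi(x_1^*,\cdot)$ (equivalently, Fermat's rule for minimizing the $\sigma$-strongly convex $g-\psi_1(x_1^*,\cdot)-\bbe[\psi_2(x_1^*,\cdot,\xi)]$ over the convex set $Y_1$) is exactly the second line. In the backward direction, the second line says $y_1^*$ is a stationary point of that strongly convex program; by convexity this forces $y_1^*=y_1(x_1^*)$, and substituting into the first line recovers $0\in\partial f(x_1^*)+\nabla\Phi(x_1^*)+\mathcal N_{X_1}(x_1^*)=\partial\bar\psi(x_1^*)+\mathcal N_{X_1}(x_1^*)$.

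Finally, part (c). The inclusion $S_g\subset S_l$ is immediate from the definitions, since the global inequalities hold in particular on every neighborhood. For $S_l\subset S_{kkt}$, Remark~\ref{r:localminimax} gives that a local minimax point $(\bar x_1,\bar y_1)$ has $\bar x_1$ a local minimizer of $\bar\psi$ and $\bar y_1$ the global maximizer of $\psi(\bar x_1,\cdot)$; Fermat's rule yields $0\in\partial\bar\psi(\bar x_1)+\mathcal N_{X_1}(\bar x_1)$, so by (a)$\Leftrightarrow$(b) together with uniqueness of the maximizer $\bar y_1=y_1(\bar x_1)$, the pair $(\bar x_1,\bar y_1)$ solves \eqref{eq:kkt}. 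For nonemptiness it suffices, by the inclusions, to show $S_g\neq\emptyset$: $\Phi$ is continuous and $f$ is proper lsc, so $\bar\psi$ is proper lsc on the compact set $X_1$ and attains its minimum at some $\bar x_1$ by Weierstrass; taking $\bar y_1:=y_1(\bar x_1)$, the two defining inequalities of a global minimax point follow directly from $\bar x_1\in\argmin_{x_1\in X_1}\bar\psi(x_1)$ and $\bar y_1\in\argmax_{y_1\in Y_1}\psi(\bar x_1,y_1)$. The main obstacle is the differentiability package of the first two paragraphs—justifying the Danskin differentiation of $\Phi$ and the interchange of expectation and gradient for $\bbe[\psi_2]$, both of which hinge on uniqueness and continuity of the inner maximizer and on the Lipschitz/continuity and boundedness estimates from Lemma~\ref{l:2stage-bound-continuity}; once these are in place, the KKT identification and the set inclusions are routine.
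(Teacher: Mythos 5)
Your proof is correct and its core — reducing to $\bar\psi$, using strong concavity to get a unique, continuous maximizer $y_1(x_1)$, differentiating $\Phi$ via Danskin, and applying the sum rule $\partial\bar\psi=\partial f+\nabla\Phi$ to identify stationarity of \eqref{eq:first-stage-min} with the system \eqref{eq:kkt} — is exactly the paper's argument for (a)$\Leftrightarrow$(b) (compare \eqref{eq:partial-barpsi}); your backward direction, where strong convexity forces $y_1^*=y_1(x_1^*)$, is the "similar" step the paper omits. The one genuine divergence is in part (c): for $S_l\subset S_{kkt}$ the paper simply cites \cite[Theorem 3.11]{jiang2023optimality}, whereas you derive the inclusion directly from Remark~\ref{r:localminimax} (a local minimax point yields a local minimizer of $\bar\psi$ and the global maximizer of $\psi(\bar x_1,\cdot)$), Fermat's rule for the Clarke subdifferential over the convex set $X_1$, and the already-proved equivalence (a)$\Leftrightarrow$(b) together with uniqueness of the inner maximizer. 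This self-contained route is sound in the paper's setting precisely because strong concavity collapses the local minimax definition to "local min of $\bar\psi$ plus global max in $y_1$," so nothing from the cited general theory is actually needed; the citation buys generality beyond the strongly concave case, while your argument buys transparency. One small point of care: Fermat's rule in the form $0\in\partial\bar\psi(\bar x_1)+\mathcal N_{X_1}(\bar x_1)$ requires $\bar\psi$ (hence $f$) to be Lipschitz near $\bar x_1$, an implicit regularity the paper also invokes when asserting Lipschitz continuity of $\bar\psi$ from \eqref{eq:partial-barpsi}; your Weierstrass argument for nonemptiness of $S_g$, which needs only lower semicontinuity of $f$, is if anything slightly more careful than the paper's on that score.
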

\begin{proof}
We prove (a) $\Rightarrow$ (b) firstly. When $x_1^*$ is a stationary point of problem \eqref{eq:first-stage-min}, by Lemma~\ref{l:2stage-bound-continuity} (iii) and Proposition \ref{p:objv} (ii), $\bbe[\psi_2]$ is convex-concave and continuous, combined with  strong convexity of $g(\cdot)- \psi_1(x_1, \cdot)$,  there must exist a unique $y_1^*$ such that
\[
0\in -\nabla_{y_1}\psi_1(x_1^*, y_1^*) - \nabla_{y_1}\bbe\left[\psi_2(x_1^*, y_1^*, \xi)\right]+ \partial g(y_1^*)+ \mathcal{N}_{Y_1}(y_1^*),
\]
and
$\bar{\psi}(x_1^*)=\psi(x_1^*, y_1^*)$. Moreover, by Danskin's theorem,
\begin{equation}\label{eq:partial-barpsi}
\partial\bar{\psi}(x_1^*)=\partial_{x_1}\psi(x_1^*, y_1^*)=\partial f(x_1^*) + \nabla_{x_1}\psi_1(x_1^*, y_1^*) + \nabla_{x_1}\bbe\left[\psi_2(x_1^*, y_1^*, \xi)\right].
\end{equation}
Then $0\in \partial\bar{\psi}(x_1^*) + \mathcal{N}_{X_1}(x_1^*)$ implies that $(x_1^*, y_1^*)$ is a solution of \eqref{eq:kkt}.

The proof of (b) $\Rightarrow$ (a) is similar and we omit the details.

Now we prove (c). By \eqref{eq:partial-barpsi}, $\bar{\psi}(\cdot)$ is Lipschitz continuous over $X_1$. Then, by the compactness of $X_1$, the global optimal solution set of \eqref{eq:first-stage-min}, denoted by $S^*_{\bar{\psi}}$, is nonempty. Then it is easy to check that
\[
S_g:=\left\{(\bar{x}_1^*, \bar{\bm y}_1^*(\bar{x}_1^*)): \bar{x}_1^*\in S^*_{\bar{\psi}} \inmat{ and } \bar{\bm y}_1^*(\bar{x}_1^*):=\arg\max_{y_1\in Y_1} \psi(\bar{x}_1^*, y_1)\right\}.
\]

By Remark \ref{r:localminimax}, if $\bar{y}_1$ is a global maximum point of $\psi(\bar{x}_1, \cdot)$, and $\bar{x}_1$ is a local minimum point of $\bar{\psi}(\cdot)$, then $(\bar{x}_1, \bar{y}_1)$ is a local minimax point of problem \eqref{eq:ts-minimax}. It is trivial to observe that all global minimax points satisfy the above conditions and are therefore local minimax points, that is,  $S_g\subset S_l$.


Moreover,  $S_l\subset S_{kkt}$ follows directly from \cite[Theorem 3.11]{jiang2023optimality}.
 \hfill $\square$
\end{proof}

Note that  problem \eqref{eq:ts-minimax-interchange} is also a nonconvex-nonsmooth minimax problem; consequently, the set of  saddle points may be empty. In this case, we consider  global minimax points and local minimax points of \eqref{eq:ts-minimax-interchange}.

\begin{proposition}\label{p:gg-ll}
Under Assumptions \ref{a:second-stage}-\ref{a:xicompact-regular}, the following statements 
hold.
\begin{itemize}
\item[(i)] $(\tilde{x}_1, \tilde{y}_1)$ is a global minimax point of \eqref{eq:ts-minimax}  if and only if $(\tilde{x}_1, \tilde{{\bf x}}_2, \tilde{y}_1, \tilde{{\bf y}}_2)$ is a global minimax point of \eqref{eq:ts-minimax-interchange}, where $(\tilde{{\bf x}}_2(\xi), \tilde{{\bf y}}_2(\xi)) = ({\bf x}^*_2(\tilde{x}_1, \tilde{y}_1, \xi), {\bf y}^*_2(\tilde{x}_1, \tilde{y}_1, \xi))$ is the unique saddle point of the minimax problem in \eqref{eq:st-minimax} with $(\tilde{x}_1, \tilde{y}_1)$ for a.e. $\xi\in\Xi$;

\item[(ii)] $(\tilde{x}_1, \tilde{y}_1)$ is a local minimax point of \eqref{eq:ts-minimax}  if and only if $(\tilde{x}_1, \tilde{{\bf x}}_2, \tilde{y}_1, \tilde{{\bf y}}_2)$ is a local minimax point of \eqref{eq:ts-minimax-interchange}, where $(\tilde{{\bf x}}_2(\xi), \tilde{{\bf y}}_2(\xi)) = ({\bf x}^*_2(\tilde{x}_1, \tilde{y}_1, \xi), {\bf y}^*_2(\tilde{x}_1, \tilde{y}_1, \xi))$ is the unique saddle point of the minimax problem in \eqref{eq:st-minimax} with $(\tilde{x}_1, \tilde{y}_1)$ for a.e. $\xi\in\Xi$.
\end{itemize}

\end{proposition}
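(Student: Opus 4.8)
The plan is to collapse both equivalences to a single scalar identity relating the first-stage max-value functions of the two formulations, and then to read the global and local minimax characterizations off that identity. Write $\bar\psi(x_1):=\max_{y_1\in Y_1}\psi(x_1,y_1)$ for \eqref{eq:ts-minimax} and $\Psi(x_1,{\bf x}_2):=\max_{(y_1,{\bf y}_2)\in{\bf Y}}\,F_1(x_1,y_1)+\bbe[F_2({\bf x}_2(\xi),{\bf y}_2(\xi),\xi)]$ for \eqref{eq:ts-minimax-interchange}. The central step I would establish first is the identity $\min_{{\bf x}_2:(x_1,{\bf x}_2)\in{\bf X}}\Psi(x_1,{\bf x}_2)=\bar\psi(x_1)$ for every $x_1\in X_1$, together with the fact that the unique minimizer is ${\bf x}_2^\star(x_1):=\bar{{\bf x}}_2(x_1,y_1^\star(x_1),\cdot)$, where $y_1^\star(x_1):=\argmax_{y_1\in Y_1}\psi(x_1,y_1)$, and that $x_1\mapsto{\bf x}_2^\star(x_1)$ is continuous into $\mathcal{L}^{n_2}_p$.

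To prove this identity I would first reuse the two interchange steps from the proof of Proposition \ref{p:xy1212} to rewrite $\Psi(x_1,{\bf x}_2)=\max_{y_1\in Y_1}F_1(x_1,y_1)+\bbe[f_{21}({\bf x}_2(\xi),y_1,\xi)]$. By Lemma \ref{l:2stage-bound-continuity}(i) this inner function is strongly convex in ${\bf x}_2$ and, by the strong concavity of $F_1(x_1,\cdot)$, strongly concave in $y_1$, over convex sets one of which is compact, so Sion's minimax theorem lets me swap $\min_{{\bf x}_2}$ and $\max_{y_1}$; interchanging the inner minimization with the expectation (again as in Proposition \ref{p:xy1212}) collapses the inner minimum to $\psi(x_1,y_1)$, giving $\min_{{\bf x}_2}\Psi(x_1,{\bf x}_2)=\max_{y_1}\psi(x_1,y_1)=\bar\psi(x_1)$. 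The saddle point of this convex--concave problem has $y_1$-part $y_1^\star(x_1)$ (unique by strong concavity) and ${\bf x}_2$-part the pointwise minimizer of $f_{21}(\cdot,y_1^\star(x_1),\xi)$, which is exactly $\bar{{\bf x}}_2(x_1,y_1^\star(x_1),\cdot)$. Continuity of ${\bf x}_2^\star(\cdot)$ into $\mathcal{L}^{n_2}_p$ then follows from the uniform-in-$\xi$ Lipschitz continuity of $\bar{{\bf x}}_2$ in Lemma \ref{l:2stage-bound-continuity}(iii), the continuity of $y_1^\star(\cdot)$ coming from strong concavity, and the compactness of $\Xi$.

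With the identity in hand I would transfer the standard characterizations. Using Definition \ref{d:glolocalminimax} and the strong concavity of $\psi(\bar x_1,\cdot)$ (cf. Remark \ref{r:localminimax}), $(\tilde x_1,\tilde y_1)$ is a global (resp. local) minimax point of \eqref{eq:ts-minimax} iff $\tilde x_1$ is a global (resp. local) minimizer of $\bar\psi$ and $\tilde y_1=y_1^\star(\tilde x_1)$; analogously $(\tilde x_1,\tilde{{\bf x}}_2,\tilde y_1,\tilde{{\bf y}}_2)$ is a global (resp. local) minimax point of \eqref{eq:ts-minimax-interchange} iff $(\tilde x_1,\tilde{{\bf x}}_2)$ is a global (resp. local) minimizer of $\Psi$ and $(\tilde y_1,\tilde{{\bf y}}_2)$ is the (unique, by strong concavity) inner maximizer at $(\tilde x_1,\tilde{{\bf x}}_2)$. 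For (i), the identity gives $\min_{{\bf X}}\Psi=\min_{x_1}\bar\psi$ with $\argmin_{{\bf X}}\Psi=\{(x_1,{\bf x}_2^\star(x_1)):x_1\in\argmin_{X_1}\bar\psi\}$, and the inner maximizer at $(\tilde x_1,{\bf x}_2^\star(\tilde x_1))$ is the pair $(y_1^\star(\tilde x_1),\bar{{\bf y}}_2(\tilde x_1,y_1^\star(\tilde x_1),\cdot))$, which together with ${\bf x}_2^\star(\tilde x_1)=\bar{{\bf x}}_2(\tilde x_1,y_1^\star(\tilde x_1),\cdot)$ is exactly the second-stage saddle point of \eqref{eq:st-minimax}; uniqueness makes this correspondence reversible, so the two characterizations match. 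The forward implication of (ii) is then immediate, since $\Psi(x_1,{\bf x}_2)\ge\bar\psi(x_1)\ge\bar\psi(\tilde x_1)=\Psi(\tilde x_1,\tilde{{\bf x}}_2)$ on any ball with $\|x_1-\tilde x_1\|\le\delta_0$, where strong convexity of $\Psi(\tilde x_1,\cdot)$ forces $\tilde{{\bf x}}_2={\bf x}_2^\star(\tilde x_1)$ and hence $\Psi(\tilde x_1,\tilde{{\bf x}}_2)=\bar\psi(\tilde x_1)$.

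The hard part will be the reverse implication in the local case (ii): from local minimality of $\Psi$ at $(\tilde x_1,\tilde{{\bf x}}_2)$ I must deduce local minimality of $\bar\psi$ at $\tilde x_1$, and the difficulty is purely metric --- I need the candidate competitor $(x_1,{\bf x}_2^\star(x_1))$ to stay inside the prescribed $\mathcal{L}_p$-neighborhood. This is exactly where the continuity of ${\bf x}_2^\star(\cdot)$ into $\mathcal{L}^{n_2}_p$ is essential: it supplies a radius $\delta'\le\delta_0$ with $\|x_1-\tilde x_1\|\le\delta'\Rightarrow\|{\bf x}_2^\star(x_1)-\tilde{{\bf x}}_2\|_p$ small enough that $(x_1,{\bf x}_2^\star(x_1))$ lies in the $\delta_0$-ball, whence $\bar\psi(x_1)=\Psi(x_1,{\bf x}_2^\star(x_1))\ge\Psi(\tilde x_1,\tilde{{\bf x}}_2)=\bar\psi(\tilde x_1)$. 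Transferring the inner-maximizer correspondence as in (i) then yields $\tilde y_1=y_1^\star(\tilde x_1)$, completing (ii).
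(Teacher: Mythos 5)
Your proposal is correct, but it takes a genuinely different route from the paper's. The paper argues directly on the defining inequalities: for the backward implication of (i) it freezes $(\tilde{x}_1,\tilde{y}_1)$ and perturbs only the second-stage variables in \eqref{eq:ts-global} to conclude that $(\tilde{{\bf x}}_2(\xi),\tilde{{\bf y}}_2(\xi))$ is, for a.e.\ $\xi$, a global minimax point and hence---by the strong convexity--strong concavity of $F_2$---the unique saddle point of the second-stage problem in \eqref{eq:st-minimax}; it then invokes the interchange equivalence from Proposition \ref{p:xy1212} to reduce \eqref{eq:ts-global} to \eqref{eq:fs-global}, calls the forward implication ``immediate,'' and dismisses (ii) as ``similar.'' You instead prove the reduction identity $\min_{{\bf x}_2}\Psi(x_1,{\bf x}_2)=\bar{\psi}(x_1)$ together with a continuous minimizing selection ${\bf x}_2^\star(x_1)=\bar{{\bf x}}_2(x_1,y_1^\star(x_1),\cdot)$, via Sion's theorem plus the interchange steps of Proposition \ref{p:xy1212}, and then read both equivalences off the characterization ``(local/global) minimizer of the max-value function plus unique inner maximizer'' (cf.\ Remark \ref{r:localminimax}). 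What your route buys is a genuinely rigorous treatment of the local case: the step the paper leaves implicit in (ii)---that a local minimizer of $\Psi$ in the $\mathcal{L}_p$-metric projects to a local minimizer of $\bar{\psi}$---does require exactly your continuity-of-selection argument to keep the competitor $(x_1,{\bf x}_2^\star(x_1))$ inside the prescribed ball, and Lemma \ref{l:2stage-bound-continuity}(iii) (Lipschitz continuity of $\bar{{\bf x}}_2$ in $(x_1,y_1)$, uniform over the compact $\Xi$) combined with Lipschitz continuity of $y_1^\star(\cdot)$ from strong concavity (the same mechanism as Lemma \ref{l:properties}(i)) delivers it. What the paper's route buys is brevity and avoidance of infinite-dimensional minimax machinery. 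Two cautions on your write-up, neither fatal: the Sion swap over ${\bf x}_2\in\mathcal{L}^{n_2}_p$ needs a weakly compact feasible set, so you should first truncate to $\check{X}_2$-valued policies via Remark \ref{r:strong-bounded}, exactly as the paper does in the proof of Proposition \ref{p:xy1212}; and for $p\neq 2$ your appeals to ``strong convexity of $\Psi(\tilde{x}_1,\cdot)$'' and ``strong concavity'' in the $\mathcal{L}_p$ norm should be weakened to strict convexity/concavity inherited from the pointwise $\sigma$-strong convexity--concavity, which is all that your uniqueness claims actually require.
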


\begin{proof}
 (i)  We only show the backward implication, since the forward implication follows from the strong convexity-concavity of the second-stage problem  immediately.
 For any global minimax point $(\tilde{x}_1, \tilde{{\bf x}}_2,\tilde{y}_1, \tilde{{\bf y}}_2)\in {\bf X}\times {\bf Y}$ of the minimax problem \eqref{eq:ts-minimax-interchange}, we have
\begin{align*}
F_1(\tilde{x}_1, \tilde{y}_1) + \mathbb{E}\left[F_2(\tilde{{\bf x}}_2(\xi), {\bf y}_2(\xi), \xi)\right] &\leq F_1(\tilde{x}_1, \tilde{y}_1) + \mathbb{E}\left[F_2(\tilde{{\bf x}}_2(\xi), \tilde{{\bf y}}_2(\xi), \xi)\right] \\
&\leq \max_{{\bf y}'_2(\xi)\in {\bf Y}_2(\tilde{y}_1)}F_1(\tilde{x}_1, \tilde{y}_1) + \mathbb{E}\left[F_2({\bf x}_2(\xi), {\bf y}'_2(\xi), \xi)\right]
\end{align*}
 for any $({\bf x}_2, {\bf y}_2)\in {\bf X}_2(\tilde{x}_1)\times {\bf Y}_2(\tilde{y}_1)$ with
\[
{\bf X}_2(\tilde{x}_1):=\{{\bf x}_2\in\mathcal{L}^{n_2}_p: {\bf x}_2(\xi)\in \check{X}_2, T(\xi)\tilde{x}_1+W(\xi) {\bf x}_2(\xi)\leq h(\xi), \;a.e.\; \xi\in\Xi\}
\]
and
\[
{\bf Y}_2(\tilde{y}_1):=\{{\bf y}_2\in\mathcal{L}^{m_2}_p:  {\bf y}_2(\xi)\in \check{Y}_2, A(\xi)\tilde{y}_1+B(\xi){\bf y}_2(\xi)\leq c(\xi), \;a.e.\; \xi\in\Xi\},
\]
which implies $(\tilde{{\bf x}}_2(\xi), \tilde{{\bf y}}_2(\xi))$ is the global minimax point of the second-stage minimax problem in \eqref{eq:st-minimax} with $(\tilde{x}_1, \tilde{y}_1)$ for a.e. $\xi\in\Xi$.  Since by Lemma \ref{l:2stage-bound-continuity}, the minimax problem in \eqref{eq:st-minimax} is strongly convex-strongly concave,  $(\tilde{{\bf x}}_2(\xi), \tilde{{\bf y}}_2(\xi)) = ({\bf x}^*_2(\tilde{x}_1, \tilde{y}_1, \xi), {\bf y}^*_2(\tilde{x}_1, \tilde{y}_1, \xi))$ is the unique saddle point of the second-stage minimax problem in \eqref{eq:st-minimax}, and then \eqref{eq:ts-global} is equivalent to \eqref{eq:fs-global}.

The proof of (ii) is similar as (i), we omit the details.
 \hfill $\square$
\end{proof}

\begin{remark}
If $F_1$ is continuously differentiable, then the KKT condition of the two-stage minimax problem \eqref{eq:ts-minimax}-\eqref{eq:st-minimax} is
\begin{equation}\label{eq:tsSVI}
\left\{
\begin{aligned}
&0\in \nabla_{x_1} F_1(x_1, y_1) + \mathbb{E}[T(\xi)^\top {\bm \pi}_x(\xi)] + \mathcal{N}_{X_1}(x_1), \\
&0\in -\nabla_{y_1} F_1(x_1, y_1) + \mathbb{E}[A(\xi)^\top {\bm \pi}_y(\xi)] + \mathcal{N}_{Y_1}(y_1), \\
&0= \nabla_{{\bf x}_2(\xi)}F_2({\bf x}_2(\xi), {\bf y}_2(\xi), \xi) + W(\xi)^\top {\bm \pi}_x(\xi), \\
&0\leq {\bm \pi}_x(\xi) \bot h(\xi)-T(\xi)x_1-W(\xi){\bf x}_2(\xi)\geq0, \\
&0= -\nabla_{{\bf y}_2(\xi)}F_2({\bf x}_2(\xi), {\bf y}_2(\xi), \xi) + B(\xi)^\top {\bm \pi}_y(\xi), \\
&0\leq {\bm \pi}_y(\xi) \bot c(\xi)-A(\xi)y_1-B(\xi){\bf y}_2(\xi)\geq0,  ~~~~\mathrm{a.e.}\;\xi\in\Xi,
\end{aligned}
\right.
\end{equation}
where ${\bf x}_2\in\mathcal{L}_p^{n_2}, {\bf y}_2\in\mathcal{L}_p^{m_2}$,  ${\bm \pi}_x\in \mathcal{L}_q^{l_2}$ and ${\bm \pi}_y\in \mathcal{L}_q^{s_2}$. 
Clearly, \eqref{eq:tsSVI} is a two-stage SVI. 

\end{remark}

\section{Sample average approximation of the two-stage stochastic minimax problem}

In this section, we consider SAA of  two-stage stochastic minimax problem \eqref{eq:ts-minimax}-\eqref{eq:st-minimax}. Let $\xi^1, \cdots, \xi^N$ be  independent and identically distributed (i.i.d.) samples of random variable $\xi$. We consider the following SAA problem:
\begin{equation}\label{eq:ts-minimax-SAA}
\min_{x_1\in X_1} \max_{y_1\in Y_1} \; \psi_N(x_1, y_1): = f(x_1) + \psi_1(x_1, y_1) + \Phi_{N}(x_1, y_1) - g(y_1),
\end{equation}
where $\Phi_{N}(x_1, y_1):=\frac{1}{N}\sum_{i=1}^N\psi_2(x_1, y_1, \xi^i) $,  $\psi_2(x_1, y_1, \xi^i)$ is defined in \eqref{eq:st-minimax}.

Let $\bar{\psi}_N(x_1):=\max_{y_1\in Y_1}\psi_N(x_1, y_1)$. Then $\bar{x}_1^N$ is a stationary point of
\begin{equation}\label{eq:first-stage-min-SAA}
\min_{x_1\in X_1} \; \bar{\psi}_N(x_1),
\end{equation}
if $0\in \partial \bar{\psi}_N(\bar{x}_1^N)+\mathcal{N}_{X_1}(\bar{x}_1^N)$.

\begin{proposition}\label{p:kkt-SAA}
Suppose that Assumptions \ref{a:second-stage}-\ref{a:xicompact-regular} hold. Then $\bar{x}_1^N$ is a stationary point of problem \eqref{eq:first-stage-min-SAA} if and only if  there exists a corresponding $\bar{y}_1^N$ such that $(\bar{x}_1^N, \bar{y}_1^N)$ is a solution of
\begin{equation}\label{eq:kkt-SAA}
\left\{
\begin{aligned}
&0\in \partial f(x_1) + \nabla_{x_1}\psi_1(x_1, y_1) + \nabla_{x_1}\Phi_{N}(x_1, y_1)+ \mathcal{N}_{X_1}(x_1),\\
&0\in -\nabla_{y_1}\psi_1(x_1, y_1) - \nabla_{y_1}\Phi_{N}(x_1, y_1)+ \partial g(y_1)+ \mathcal{N}_{Y_1}(y_1).
\end{aligned}
\right.
\end{equation}
\end{proposition}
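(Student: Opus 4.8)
The plan is to follow the proof of Proposition~\ref{p:kkt} almost verbatim, with the expectation $\bbe[\psi_2]$ replaced throughout by the finite sample average $\Phi_N$. The only structural fact I need about $\Phi_N$ is that, for the i.i.d.\ realizations $\xi^1,\dots,\xi^N$, it is convex-concave and continuously differentiable on $X_1\times Y_1$. This follows at once from Lemma~\ref{l:2stage-bound-continuity}(iii): each $\psi_2(\cdot,\cdot,\xi^i)$ is convex-concave and $C^1$, and a finite average inherits both properties, with $\nabla_{x_1}\Phi_N=\frac1N\sum_i\nabla_{x_1}\psi_2(\cdot,\cdot,\xi^i)$ and $\nabla_{y_1}\Phi_N=\frac1N\sum_i\nabla_{y_1}\psi_2(\cdot,\cdot,\xi^i)$. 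Notably, the SAA case is \emph{easier} than Proposition~\ref{p:kkt}, as the finite sum requires none of the measurability and interchange machinery of Proposition~\ref{p:objv}.

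For the direction ``stationary $\Rightarrow$ KKT'', I would first observe that $\psi_N(\bar x_1^N,\cdot)$ is $\sigma$-strongly concave: it is the sum of the concave map $\Phi_N(\bar x_1^N,\cdot)$, the constant $f(\bar x_1^N)$, and $\psi_1(\bar x_1^N,\cdot)-g(\cdot)$, the latter being $\sigma$-strongly concave by the blanket assumption on $g-\psi_1$. Hence $\max_{y_1\in Y_1}\psi_N(\bar x_1^N,y_1)$ admits a unique maximizer $\bar y_1^N$, and $\bar\psi_N(\bar x_1^N)=\psi_N(\bar x_1^N,\bar y_1^N)$. By strong concavity the first-order condition for this concave inner problem is both necessary and sufficient, which yields exactly the second inclusion of \eqref{eq:kkt-SAA}. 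I would then invoke Danskin's theorem, using the uniqueness of $\bar y_1^N$, to identify $\partial\bar\psi_N(\bar x_1^N)=\partial_{x_1}\psi_N(\bar x_1^N,\bar y_1^N)$; combined with the subdifferential sum rule (valid since $\psi_1$ and $\Phi_N$ are $C^1$ while only $f$ is nonsmooth) this produces $\partial\bar\psi_N(\bar x_1^N)=\partial f(\bar x_1^N)+\nabla_{x_1}\psi_1(\bar x_1^N,\bar y_1^N)+\nabla_{x_1}\Phi_N(\bar x_1^N,\bar y_1^N)$. The stationarity condition $0\in\partial\bar\psi_N(\bar x_1^N)+\mathcal{N}_{X_1}(\bar x_1^N)$ then gives the first inclusion of \eqref{eq:kkt-SAA}.

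The reverse direction ``KKT $\Rightarrow$ stationary'' runs the same chain backwards. Given a solution $(\bar x_1^N,\bar y_1^N)$ of \eqref{eq:kkt-SAA}, the second inclusion is the optimality condition of the strongly concave inner problem, so (again by sufficiency under concavity) $\bar y_1^N=\argmax_{y_1\in Y_1}\psi_N(\bar x_1^N,y_1)$ and $\bar\psi_N(\bar x_1^N)=\psi_N(\bar x_1^N,\bar y_1^N)$. Danskin's theorem and the sum rule again give the formula for $\partial\bar\psi_N(\bar x_1^N)$, and the first inclusion of \eqref{eq:kkt-SAA} is then precisely $0\in\partial\bar\psi_N(\bar x_1^N)+\mathcal{N}_{X_1}(\bar x_1^N)$, i.e.\ $\bar x_1^N$ is stationary for \eqref{eq:first-stage-min-SAA}.

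I expect the only delicate point to be the application of Danskin's theorem in this nonsmooth regime, since $f$ is merely proper and lsc: one must argue that the nonsmoothness is confined to the $x_1$-argument and does not interact with the maximization over $y_1$. Because the $y_1$-dependence of $\psi_N$ enters only through the $C^1$ functions $\psi_1,\Phi_N$ and the (possibly nonsmooth but $x_1$-independent) $g$, the value function is obtained by maximizing a family that is $C^1$ in $x_1$ up to the common additive term $f(x_1)$, the maximizer is unique, and the Danskin/sum-rule calculation \eqref{eq:partial-barpsi} used for the true problem transfers unchanged. Everything else is a routine transcription of the argument for Proposition~\ref{p:kkt}.
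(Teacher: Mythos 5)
Your proposal is correct and is essentially the paper's own argument: the paper omits the proof of Proposition~\ref{p:kkt-SAA}, stating only that it is similar to that of Proposition~\ref{p:kkt}, and your transcription (replacing $\bbe[\psi_2]$ by $\Phi_N$, using Lemma~\ref{l:2stage-bound-continuity}(iii) for convexity-concavity and continuous differentiability of each $\psi_2(\cdot,\cdot,\xi^i)$, strong concavity of the inner problem for the unique maximizer, and the Danskin/sum-rule identity as in \eqref{eq:partial-barpsi}) is exactly that intended adaptation. Your added observation that the finite-sample case dispenses with the measurability and interchange machinery of Proposition~\ref{p:objv} is accurate and consistent with the paper.
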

Note that $\nabla_{x_1}\Phi_{N}(x_1, y_1)=\frac{1}{N}\sum_{i=1}^N\nabla_{x_1}\psi_2(x_1, y_1, \xi^i)$ and $\nabla_{y_1}\Phi_{N}(x_1, y_1)=\frac{1}{N}\sum_{i=1}^N\nabla_{y_1}\psi_2(x_1, y_1, \xi^i)$.
The proof of Proposition \ref{p:kkt-SAA} is similar as the proof of Proposition \ref{p:kkt}, here we omit the details.

Then we consider the convergence between \eqref{eq:ts-minimax} and \eqref{eq:ts-minimax-SAA} in the sense of  stationary points of first-stage minimization problem \eqref{eq:first-stage-min} and its SAA problem \eqref{eq:first-stage-min-SAA}. By Propositions \ref{p:kkt} and \ref{p:kkt-SAA}, that is equivalent to  the convergence of solutions between \eqref{eq:kkt} and \eqref{eq:kkt-SAA}. We need the following notations.

For simplicity of notation, we set
\begin{equation}\label{eq:G1}
\begin{pmatrix}
G_x(x_1, y_1)\\
G_y(x_1, y_1)
\end{pmatrix}
:=
\begin{pmatrix}
\partial f(x_1) + \nabla_{x_1}\psi_1(x_1, y_1) + \nabla_{x_1}\bbe\left[\psi_2(x_1, y_1, \xi)\right]\\
-\nabla_{y_1}\psi_1(x_1, y_1) - \nabla_{y_1}\bbe\left[\psi_2(x_1, y_1, \xi)\right]+ \partial g(y_1)
\end{pmatrix}
\end{equation}
and
\begin{equation}\label{eq:G1N}
\begin{pmatrix}
G_x^N(x_1, y_1)\\
G_y^N(x_1, y_1)
\end{pmatrix}
:=
\begin{pmatrix}
\partial f(x_1) + \nabla_{x_1}\psi_1(x_1, y_1) + \nabla_{x_1}\Phi_{N}(x_1, y_1)\\
-\nabla_{y_1}\psi_1(x_1, y_1) - \nabla_{y_1}\Phi_{N}(x_1, y_1)+ \partial g(y_1)
\end{pmatrix}.
\end{equation}

%
%
%

\begin{theorem}
Suppose that Assumptions \ref{a:second-stage}-\ref{a:xicompact-regular} hold and $\{\xi^1, \cdots, \xi^N\}$ is a set of  i.i.d. samples of random variable $\xi$. Let $S^*$ be the solution set of SVI \eqref{eq:kkt} and $\{(\bar{x}_1^N, \bar{y}_1^N)\}$ be a sequence of solutions of its SAA problem \eqref{eq:kkt-SAA}. Then almost surely (a.s.)
\begin{equation}\label{eq:asconvergence}
 \lim_{N\to\infty}\mathbb{D}((\bar{x}_1^N, \bar{y}_1^N), S^*)=0.
 \end{equation}
%
\end{theorem}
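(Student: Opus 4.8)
The plan is to show that every cluster point of the bounded sequence $\{(\bar x_1^N,\bar y_1^N)\}\subset X_1\times Y_1$ solves the true SVI \eqref{eq:kkt} almost surely, and then to upgrade this to the distance statement \eqref{eq:asconvergence} by a compactness-and-contradiction argument. Since $X_1\times Y_1$ is compact and $S^*=S_{kkt}$ is nonempty by Proposition \ref{p:kkt}(c), it suffices to prove that whenever $(\bar x_1^{N_k},\bar y_1^{N_k})\to(\hat x_1,\hat y_1)$ along a subsequence, the limit satisfies $(\hat x_1,\hat y_1)\in S^*$; the continuity of $\mathbb{D}(\cdot,S^*)$ together with $\mathbb{D}((\hat x_1,\hat y_1),S^*)=0$ then excludes any subsequence staying bounded away from $S^*$.

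The first ingredient I would establish is uniform convergence of the sample gradients. By Lemma \ref{l:2stage-bound-continuity}(iii) the maps $(x_1,y_1,\xi)\mapsto\nabla_{x_1}\psi_2=T(\xi)^\top{\bm\pi}_{x_2}$ and $(x_1,y_1,\xi)\mapsto\nabla_{y_1}\psi_2=-A(\xi)^\top{\bm\pi}_{y_2}$ are continuous on the compact set $X_1\times Y_1\times\Xi$ (invoking Assumption \ref{a:xicompact-regular}), hence bounded there and measurable in $\xi$, so they are dominated by a constant integrable function. A uniform law of large numbers for random continuous functions (see, e.g., \cite{shapiro2021lectures}) then yields, almost surely,
\[
\sup_{(x_1,y_1)\in X_1\times Y_1}\big\|\nabla_{x_1}\Phi_N(x_1,y_1)-\nabla_{x_1}\bbe[\psi_2(x_1,y_1,\xi)]\big\|\to 0,
\]
together with the analogous statement for $\nabla_{y_1}$. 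Dominated convergence also guarantees continuity of $\nabla_{x_1}\bbe[\psi_2]$ and $\nabla_{y_1}\bbe[\psi_2]$, so combining uniform convergence with the convergence of the arguments gives $\nabla_{x_1}\Phi_{N_k}(\bar x_1^{N_k},\bar y_1^{N_k})\to\nabla_{x_1}\bbe[\psi_2(\hat x_1,\hat y_1,\xi)]$ and likewise for $y_1$.

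The main obstacle is passing to the limit in the set-valued inclusions \eqref{eq:kkt-SAA}. For each $k$ there exist $v^{N_k}\in\partial f(\bar x_1^{N_k})$ and $w^{N_k}\in\mathcal{N}_{X_1}(\bar x_1^{N_k})$ with
\[
v^{N_k}+\nabla_{x_1}\psi_1(\bar x_1^{N_k},\bar y_1^{N_k})+\nabla_{x_1}\Phi_{N_k}(\bar x_1^{N_k},\bar y_1^{N_k})+w^{N_k}=0 .
\]
Only the sum $v^{N_k}+w^{N_k}$ is known to converge, so I would extract convergent subsequences of each term separately: since $f$ is Lipschitz, $\partial f$ is locally bounded and $\{v^{N_k}\}$ has a limit $v^*\in\partial f(\hat x_1)$ by the outer semicontinuity (graphical closedness) of the Clarke subdifferential; then $w^{N_k}$ converges to the residual $w^*$, and the graphical closedness of $\mathcal{N}_{X_1}$ (valid because $X_1$ is closed and convex) gives $w^*\in\mathcal{N}_{X_1}(\hat x_1)$. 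Combined with the continuity of $\nabla_{x_1}\psi_1$ and the gradient limit above, this produces the first inclusion of \eqref{eq:kkt} at $(\hat x_1,\hat y_1)$. Repeating the argument with $\partial g$ and $\mathcal{N}_{Y_1}$ for the $y_1$-component yields the second inclusion, so $(\hat x_1,\hat y_1)\in S^*$.

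Finally, I would conclude by contradiction: if \eqref{eq:asconvergence} failed on a set of positive probability, then on such a sample path some subsequence would satisfy $\mathbb{D}((\bar x_1^{N_k},\bar y_1^{N_k}),S^*)\ge\varepsilon>0$; compactness of $X_1\times Y_1$ extracts a further convergent sub-subsequence whose limit lies in $S^*$ by the preceding step, forcing the distance along it to zero, a contradiction. The two uniform-LLN exceptional sets are merged into a single null set so that the entire pathwise argument holds almost surely. The genuinely delicate point is the graphical limit of the set-valued terms, where splitting a convergent sum into convergent summands hinges on the local boundedness of $\partial f$ and $\partial g$ and on the outer semicontinuity of all four multifunctions $\partial f,\partial g,\mathcal{N}_{X_1},\mathcal{N}_{Y_1}$.
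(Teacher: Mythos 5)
Your proof is correct, and it shares the paper's central analytic ingredient --- the uniform law of large numbers applied to $\nabla_{x_1}\Phi_N$ and $\nabla_{y_1}\Phi_N$ over the compact set $X_1\times Y_1$ (the paper invokes \cite[Theorem 9.60]{shapiro2021lectures}, justified by the Lipschitz continuity of $\nabla_{x_1}\psi_2,\nabla_{y_1}\psi_2$ from Lemma \ref{l:2stage-bound-continuity}(iii)) --- but it replaces the paper's stability step by a genuinely different, more elementary argument. The paper rewrites \eqref{eq:kkt} and \eqref{eq:kkt-SAA} as generalized equations with the outer semicontinuous multifunctions \eqref{eq:G1}--\eqref{eq:G1N}, bounds the excess $\mathbb{D}(G^N,G)$ by the singleton gradient perturbation via $\mathbb{D}(A+C,B+C)\leq\mathbb{D}(A,B)$, and then cites the quantitative perturbation result \cite[Lemma 4.2(i)]{xu2010uniform} to conclude the stronger set-level statement $\mathbb{D}(S_N,S^*)\to0$ a.s., of which \eqref{eq:asconvergence} is an immediate consequence. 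You instead prove \eqref{eq:asconvergence} directly: extract cluster points, pass to the limit in the inclusions by splitting the convergent sum $v^{N_k}+w^{N_k}$ using local boundedness of $\partial f,\partial g$ and graph-closedness of $\partial f,\partial g,\mathcal{N}_{X_1},\mathcal{N}_{Y_1}$, and finish by compactness and contradiction; in effect you reprove exactly the special case of Xu's lemma that is needed, and your explicit appeal to local boundedness is where the real content sits (the paper leaves it inside the cited lemma, though outer semicontinuity of the summed mapping tacitly requires it as well). Each route buys something: the citation-based proof is shorter and delivers convergence of the whole SAA solution set, while yours is self-contained and makes visible precisely which properties of the four multifunctions are used. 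One small remark: both arguments need $f$ and $g$ to be locally Lipschitz so that the Clarke subdifferentials are defined, outer semicontinuous and locally bounded; the paper announces $f,g$ only as proper lsc but uses these stronger properties in its own proof, so your Lipschitz hypothesis is consistent with what the paper actually relies on. Your merging of the two ULLN exceptional sets into a single null set before running the pathwise argument is also the correct bookkeeping.
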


\begin{proof}
 Note that, by using notation \eqref{eq:G1} and \eqref{eq:G1N}, SVI \eqref{eq:kkt} and its SAA problem \eqref{eq:kkt-SAA} can be written as
\[
0\in \begin{pmatrix}
G_x(x_1, y_1)\\
G_y(x_1, y_1)
\end{pmatrix}
+ \mathcal{N}_{X_1\times Y_1}((x_1, y_1)), \; {\rm and}\;
0\in \begin{pmatrix}
G_x^N(x_1, y_1)\\
G_y^N(x_1, y_1)
\end{pmatrix}
+ \mathcal{N}_{X_1\times Y_1}((x_1, y_1)).
\]
Moreover, since $\partial f(\cdot)$ and $\partial g(\cdot)$ are outer semicontinuous (osc)\footnote{A multifunction $S: \mathbb{R}^n\rightrightarrows\mathbb{R}^n$ is osc, if for all $\epsilon>0$, there exists $\delta>0$ such that $S(x')\subset S(x) +\epsilon B_{n}$ for all $x'\in \delta B_{n}$, where $B_n$ denotes the unit ball in $\mathbb{R}^n$ \cite{clarke1990optimization,rockafellar1998variational}.} \cite[Proposition 2.1.5]{clarke1990optimization},
$\begin{pmatrix}
G_x(\cdot, \cdot)\\
G_y(\cdot, \cdot)
\end{pmatrix}$
and
$
\begin{pmatrix}
G_x^N(\cdot, \cdot)\\
G_y^N(\cdot, \cdot)
\end{pmatrix}
$
are osc. By \cite[Lemma 4.2 (i)]{xu2010uniform}, for any $\epsilon>0$, there exists a $\delta>0$ such that if
\begin{equation}\label{eq:G1G1N-delta}
\sup_{(x_1, y_1)\in X_1\times Y_1}\mathbb{D}\left(\begin{pmatrix}
G_x^N(x_1, y_1)\\
G_y^N(x_1, y_1)
\end{pmatrix},
\begin{pmatrix}
G_x(x_1, y_1)\\
G_y(x_1, y_1)
\end{pmatrix}
\right)\leq \delta,
\end{equation}
then $\mathbb{D}(S_N, S^*)\leq \epsilon$, where $S_N$ is the solution set of the SAA problem \eqref{eq:kkt-SAA}.

Then we only need to show
that, for any $\delta>0$, \eqref{eq:G1G1N-delta} holds for $N$ sufficiently large. Note that for any set $A, B, C\subset \mathbb{R}^{n_1+m_1}$, $\mathbb{D}(A+C, B+C)\leq \mathbb{D}(A, B)$, then
\begin{align}\label{eq:equal}
\mathbb{D}\left(\begin{pmatrix}
G_x^N(x_1, y_1)\\
G_y^N(x_1, y_1)
\end{pmatrix},
\begin{pmatrix}
G_x(x_1, y_1)\\
G_y(x_1, y_1)
\end{pmatrix}
\right) &\leq \left\|
\begin{pmatrix}
\nabla_{x_1}\Phi_{N}(x_1, y_1)-\nabla_{x_1}\bbe\left[\psi_2(x_1, y_1, \xi)\right]\\
\nabla_{y_1}\Phi_{N}(x_1, y_1)-\nabla_{y_1}\bbe\left[\psi_2(x_1, y_1, \xi)\right]
\end{pmatrix}
\right\|\nonumber\\
&\leq \left\|
\begin{pmatrix}
\nabla_{x_1}\Phi_{N}(x_1, y_1)-\nabla_{x_1}\bbe\left[\psi_2(x_1, y_1, \xi)\right]\\
\nabla_{y_1}\Phi_{N}(x_1, y_1)-\nabla_{y_1}\bbe\left[\psi_2(x_1, y_1, \xi)\right]
\end{pmatrix}
\right\|_1.\nonumber
\end{align}
Moreover, by Lemma \ref{l:2stage-bound-continuity}, $\psi_2(\cdot, \cdot, \xi)$ is continuously differentiable, then we have $\nabla_{x_1}\bbe\left[\psi_2(x_1, y_1, \xi)\right]=\bbe\left[\nabla_{x_1}\psi_2(x_1, y_1, \xi)\right]$, $\nabla_{y_1}\bbe\left[\psi_2(x_1, y_1, \xi)\right]=\bbe\left[\nabla_{y_1}\psi_2(x_1, y_1, \xi)\right]$, $\nabla_{x_1} \psi_2(\cdot, \cdot, \xi)=T(\xi)^\top {\bm \pi}_{x_2}(\cdot, \cdot, \xi)$ and $\nabla_{y_1} \psi_2(\cdot, \cdot, \xi)=-A(\xi)^\top {\bm \pi}_{y_2}(\cdot, \cdot, \xi)$ are Lipschitz continuous  over the compact set $X_1\times Y_1$. By the uniform law of large numbers \cite[Theorem 9.60]{shapiro2021lectures},
\[
\lim_{N\to\infty}\sup_{\substack{(x_1, y_1)\in X_1\times Y_1\\i=1, \cdots, n_1+m_1}}\left|
\begin{pmatrix}
\nabla_{x_1}\Phi_{N}(x_1, y_1)-\nabla_{x_1}\bbe\left[\psi_2(x_1, y_1, \xi)\right]\\
\nabla_{y_1}\Phi_{N}(x_1, y_1)-\nabla_{y_1}\bbe\left[\psi_2(x_1, y_1, \xi)\right]
\end{pmatrix}_i
\right| =0,\; a.s.
\]
and then
\[
\lim_{N\to\infty}\sup_{(x_1, y_1)\in X_1\times Y_1}\mathbb{D}\left(\begin{pmatrix}
G_x^N(x_1, y_1)\\
G_y^N(x_1, y_1)
\end{pmatrix},
\begin{pmatrix}
G_x(x_1, y_1)\\
G_y(x_1, y_1)
\end{pmatrix}
\right)=0, \; a.s.,
\]
which implies \eqref{eq:asconvergence}.
 \hfill $\square$
\end{proof}

\begin{remark}
Under the suitable assumptions of the moment generating functions of $\nabla_{x_1}\psi_2$, $\nabla_{y_1}\psi_2$ and their Lipschitz coefficients,  we also can derive the exponential convergence rate of the SAA problem \eqref{eq:ts-minimax-SAA} to the true problem \eqref{eq:ts-minimax}-\eqref{eq:st-minimax} in terms of their KKT point sets, based on the uniform large deviation results in \cite[Theorem 5.1]{shapiro2008stochastic} and \cite[Lemma 4.2 (i)]{xu2010uniform}.
\end{remark}

\section{The inexact parallel proximal gradient descent ascent algorithm for two-stage stochastic minimax problem}

This section presents IPPGDA algorithm for the SAA problem \eqref{eq:ts-minimax-SAA} of the  two-stage stochastic minimax problem \eqref{eq:ts-minimax}-\eqref{eq:st-minimax}. Let $\bar{f} := f +\bm{1}_{X_1}$ and $\bar{g} := g +\bm{1}_{Y_1}$,  where $\bm{1}$ denotes the indicator function.

 \begin{algorithm}[H]
		\caption{IPPGDA algorithm for the two-stage stochastic minimax problem \eqref{eq:ts-minimax}-\eqref{eq:st-minimax}}
		\begin{algorithmic}[1]
			\Require initial point $(x^1_1, y^1_1)$, stepsizes $\beta_1^y$, $\beta_1^x$ and sequence $\{\epsilon^k\}$
              \For{$k=1, 2, \cdots$}
              \For{$i=1, 2, \cdots, N$}
						\State Solve \eqref{eq:nonsmooth-equation} with given $(x_{1}^{k}, y_{1}^{k})$ and $\xi^{i}$ to obtain $\mu^{k,i}:=(\tilde{x}_{2}^{k,i}, \tilde{y}_{2}^{k,i}, \tilde{\pi}_{x_{2}^{k}}^{i}, \tilde{\pi}_{y_{2}^{k}}^{i})$, such that $\| H(\mu^{k,i}, \xi^i) \| < \epsilon^{k}$.
			            \EndFor
                    \State $\tilde{v}_x^k= \frac{1}{N}\sum_{i=1}^N T(\xi^i)^\top \tilde{\pi}_{x_2^k}^{i}$
                    \State$ \tilde{v}_y^k = -\frac{1}{N}\sum_{i=1}^N A(\xi^i)^\top\tilde{\pi}_{y_2^k}^{i}$
              \State $y_1^{k+1} = \arg\max_{y_1\in \mathbb{R}^{m_1}} \{\langle \nabla_{y_1} \psi_1(x_1^k, y_1^k) + \tilde{v}_y^k, y_1-y_1^{k}\rangle-\bar{g}(y_1) - \frac{1}{2\beta_1^y}\|y_1-y_1^{k}\|^2\}$
              \State $x_1^{k+1} = \arg\min_{x_1\in \mathbb{R}^{n_1}} \{\bar{f}(x_1)  +\langle \nabla_{x_1}\psi_1(x_1^k, y_1^k) + \tilde{v}_x^k, x_1-x_1^{k}\rangle + \frac{1}{2\beta_1^x}\|x_1-x_1^{k}\|^2\}$
              \EndFor
		\end{algorithmic}
	\end{algorithm}
	
\begin{remark}\label{r:delta}
IPPGDA algorithm is an inexact version of   PPGDA in \cite{cohen2025alternating}. Since the gradient of the second-stage minimax value function $\Phi_{N}(x_1, y_1)$ cannot be computed exactly, we cannot apply PPGDA. Instead, IPPGDA algorithm finds an inexact solution of problem \eqref{eq:nonsmooth-equation}, which defines an inexact
gradient of  $\Phi_{N}(x_1, y_1)$ at each step.
 In Section 5.4, we will show that under suitable conditions,
$\| H(\mu^{k,i}, \xi) \| <  \frac{\delta^{k} \sqrt{\underline{\lambda}}}{\max\{\bar{a}, \bar{t}\}}$ implies $\|\tilde{\pi}_{x_2^k}^{i} - \pi_{x_2^k} \|\leq \frac{\delta^k}{\bar{a}}$ and $\|\tilde{\pi}_{y_2^k}^{i} - \pi_{y_2^k}\|\leq \frac{\delta^k}{\bar{t}}$, where $\bar{a}:=\max_{\xi\in\Xi}\|A(\xi)\|$, $\bar{t}:=\max_{\xi\in\Xi}\|T(\xi)\|$, $(\pi_{x_2^k}^i, \pi_{y_2^k}^i)$ is the $\pi$-component of the unique solution to \eqref{eq:nonsmooth-equation}. Let
$$\delta^k_x:=\tilde{v}_x^k- v_x^k, \quad \quad  \delta^k_y:=\tilde{v}_y^k- v_y^k,$$
where $v_x^k$ and $v_y^k$ are the true gradients of $\Phi_{N}(x_1, y_1)$ w.r.t. $x_1$ and $y_1$.  Consequently, $\|\delta^k_x\|=\|\tilde{v}_x^k- v_x^k\|\leq\delta^k$  and $\|\delta^k_y\|=\|\tilde{v}_y^k- v_y^k\|\leq\delta^k$. The requirement of $\{\delta^k\}$ for convergence of IPPGDA algorithm will be presented in Lemma \ref{l:condition1}.
\end{remark}

We  consider the SAA two-stage stochastic minimax problem  \eqref{eq:ts-minimax-SAA} as the following minimization problem
\begin{equation}\label{eq:onemin}
\min_{x_1} \; \Psi_N(x_1) := \bar{f}(x_1) + \theta_N(x_1),
\end{equation}
where
\[
\theta_N(x_1):=\max_{y_1} ~\psi_1(x_1, y_1) + \Phi_{N}(x_1, y_1) -\bar{g}(y_1),
\]
and prove that the subsequence and global convergence of IPPGDA algorithm to a critical point of problem \eqref{eq:onemin}. As in \cite{cohen2025alternating}, we consider the perturbed gradient-like descent sequence as follows:
\begin{definition}\label{d:pglds}
A sequence $\{(x_1^k, \nu_k)\}_{k\in\mathbb{N}}\subset \dom~ \Psi_N \times \mathbb{R}_+$ is called a perturbed gradient-like descent sequence if the following conditions hold.
\begin{itemize}
\item Condition 1 (Perturbed sufficient decrease property). There exists $c_1>0$ such that for every $k\in\mathbb{N}$,
\begin{equation}\label{eq:psd}
c_1(\|x_1^{k+1} - x_1^k\| + \nu_k^2)\leq \left(\Psi_N(x_1^k)+\frac{1}{2}\nu_k^2\right)- \left(\Psi_N(x_1^{k+1})+\frac{1}{2}\nu_{k+1}^2\right).
\end{equation}

\item Condition 2 (Perturbed subgradient lower bound on iterates gap). There exists $c_2>0$ such that for every $k\in\mathbb{N}$, one can find $\zeta^{k+1}\in\partial \Psi_N(x_1^{k+1})$, which satisfies
\begin{equation}\label{eq:psl}
\|\zeta^{k+1}\|\leq c_2(\|x_1^{k}-x_1^{k+1}\| + \nu_k).
\end{equation}

\item Condition 3. Let $\{x_1^k\}_{k\in\mathcal{K}\subset\mathbb{N}}$ be a subsequence that converges to a point $\bar{x}_1^N$. Then
\[
\lim\sup_{k\in\mathcal{K}\subset\mathbb{N}} \; \Psi_N(x_1^k)\leq \Psi_N(\bar{x}_1^N).
\]
\end{itemize}
\end{definition}
The focus of this section is on how to incorporate the error terms $\|\tilde{v}_x^k- v_x^k\|$ and  $\|\tilde{v}_y^k- v_y^k\|$ into the perturbed gradient-like descent sequence, satisfying the three conditions in Definition \ref{d:pglds}, without increasing the computational burden.


In Section 5.1, we establish key  estimates for the sequences $\{(x_1^k, y_1^k)\}$. In Section 5.2, we verify that the generated sequence $\{(x_1^k, \nu_k)\}_{k\in\mathbb{N}}$ is a perturbed gradient-like descent sequence.  Section 5.3 investigates the Kurdyka–Łojasiewicz (KL) properties of the problem and the subsequence and global convergence of IPPGDA algorithm to a critical point of \eqref{eq:onemin}. Finally, in Section 5.4, we address the second-stage problem with an inexact solution.

\subsection{The key estimates of IPPGDA algorithm}

We consider Lipschitz continuity properties of problem (\ref{eq:onemin}) in the following lemma. To this end, in addition to the Lipschitz continuity of $\nabla \psi_1$, we further assume that there exist $L^1_{xx}>0$, $L^1_{xy}>0$, $L^1_{yx}>0$, $L^1_{yy}>0$  such that for any $x_1, \bar{x}_1\in\mathbb{R}^{n_1}$ and $y_1,\bar{y}_1\in\mathbb{R}^{m_1}$,
    \[
    \begin{matrix}
    \|\nabla_{x_1}\psi_1(x_1, y_1)-\nabla_{x_1}\psi_1(\bar{x}_1, y_1)\|\leq L^1_{xx}\|x_1-\bar{x}_1\|,\\
    \|\nabla_{x_1}\psi_1(x_1, y_1)-\nabla_{x_1}\psi_1(x_1, \bar{y}_1)\|\leq L^1_{xy}\|y_1-\bar{y}_1\|,\\
    \|\nabla_{y_1}\psi_1(x_1, y_1)-\nabla_{y_1}\psi_1(\bar{x}_1, y_1)\|\leq L^1_{yx}\|x_1-\bar{x}_1\|,\\
    \|\nabla_{y_1}\psi_1(x_1, y_1)-\nabla_{\y_1}\psi_1(x_1, \bar{y}_1)\|\leq L^1_{yy}\|y_1-\bar{y}_1\|.
    \end{matrix}
    \]

\begin{lemma}\label{l:properties}
Let
\begin{equation}\label{eq:y-max}
\tilde{y}_1^N(x_1):=\arg\max_{y_1\in Y_1} \psi_1(x_1, y_1) + \Phi_{N}(x_1, y_1)-\bar{g}(y_1).
\end{equation}
Under Assumption \ref{a:second-stage}, the following statements hold.

(i) There exists $L_{yx}>0$ such that $\tilde{y}_1^N:\mathbb{R}^{n_1}\to\mathbb{R}^{m_1}$ is $(L_{yx}/\sigma)$-Lipschitz continuous.

(ii) $\nabla \theta_N(x_1) = \nabla_{x_1} \psi_1(x_1, \tilde{y}_1^N(x_1)) + \nabla_{x_1}\Phi_{N}(x_1, \tilde{y}_1^N(x_1))$.

(iii) There exists $L_{\theta}>0$ such that $\nabla \theta_N(\cdot)$ is $L_{\theta}$-Lipschitz continuous.
\end{lemma}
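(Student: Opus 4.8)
The plan is to treat the inner maximization defining $\theta_N$ as a parametric strongly concave program and to let $\Phi_N$ inherit its smoothness and concavity from Lemma~\ref{l:2stage-bound-continuity}(iii). Write $\phi_N(x_1,y_1):=\psi_1(x_1,y_1)+\Phi_N(x_1,y_1)-\bar g(y_1)$ for the inner objective. Since $g(\cdot)-\psi_1(x_1,\cdot)$ is $\sigma$-strongly convex and, by Lemma~\ref{l:2stage-bound-continuity}(iii), each $\psi_2(x_1,\cdot,\xi^i)$ is concave so that $\Phi_N(x_1,\cdot)$ is concave, the map $y_1\mapsto\phi_N(x_1,y_1)$ is $\sigma$-strongly concave on the convex compact set $Y_1$; hence the maximizer $\tilde y_1^N(x_1)$ in \eqref{eq:y-max} is unique, and this uniqueness underlies all three parts. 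For (i), I would run the two-point argument: fix $x_1,\bar x_1$, set $y:=\tilde y_1^N(x_1)$, $\bar y:=\tilde y_1^N(\bar x_1)$, and add the strong-concavity inequalities $\phi_N(x_1,\bar y)\le\phi_N(x_1,y)-\tfrac{\sigma}{2}\|y-\bar y\|^2$ and $\phi_N(\bar x_1,y)\le\phi_N(\bar x_1,\bar y)-\tfrac{\sigma}{2}\|y-\bar y\|^2$. The $\bar g$ terms cancel, leaving
\[
\sigma\|y-\bar y\|^2\le\big[\psi_1(x_1,y)-\psi_1(x_1,\bar y)-\psi_1(\bar x_1,y)+\psi_1(\bar x_1,\bar y)\big]+\big[\text{same with }\Phi_N\big].
\]
Writing each mixed second difference as an integral $\int_0^1\langle\nabla_{y_1}\psi_1(x_1,\bar y+s(y-\bar y))-\nabla_{y_1}\psi_1(\bar x_1,\bar y+s(y-\bar y)),y-\bar y\rangle\,ds$ (and similarly for $\Phi_N$) and invoking the cross-Lipschitz bound $L^1_{yx}$ for $\psi_1$ together with the Lipschitz continuity of $\nabla_{y_1}\Phi_N$ w.r.t.\ $(x_1,y_1)$ from Lemma~\ref{l:2stage-bound-continuity}(iii) bounds the right-hand side by $L_{yx}\|x_1-\bar x_1\|\,\|y-\bar y\|$, with $L_{yx}$ the sum of the two cross-Lipschitz constants; dividing by $\|y-\bar y\|$ gives the $(L_{yx}/\sigma)$-Lipschitz property.

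For (ii) and (iii), I would first apply Danskin's theorem in its smooth form, as already used via \cite[Corollary~2.23]{shapiro2021lectures}: the integrand $\phi_N(\cdot,y_1)$ is continuously differentiable in $x_1$ for each $y_1$ (both $\psi_1$ and $\Phi_N$ are $C^1$ in $x_1$, and $\bar g$ does not depend on $x_1$), the maximization is over the compact set $Y_1$, and the maximizer is the singleton $\tilde y_1^N(x_1)$; hence $\theta_N$ is differentiable with $\nabla\theta_N(x_1)=\nabla_{x_1}\psi_1(x_1,\tilde y_1^N(x_1))+\nabla_{x_1}\Phi_N(x_1,\tilde y_1^N(x_1))$, which is (ii). For (iii) I would substitute this formula and split each gradient difference by the triangle inequality into a pure $x_1$-increment and a pure $y_1$-increment. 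The $x_1$-increments are controlled by the Lipschitz constant $L^1_{xx}$ for $\nabla_{x_1}\psi_1$ and the corresponding one for $\nabla_{x_1}\Phi_N$ from Lemma~\ref{l:2stage-bound-continuity}(iii); the $y_1$-increments are controlled by $L^1_{xy}$ and its $\Phi_N$-counterpart, after converting $\|\tilde y_1^N(x_1)-\tilde y_1^N(\bar x_1)\|$ into $\|x_1-\bar x_1\|$ via the bound from (i). Collecting terms yields $L_\theta$ as an explicit combination of these constants and $L_{yx}/\sigma$.

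I expect the only real subtlety, rather than a genuine obstacle, to be bookkeeping: correctly extracting the cross-Lipschitz constants for $\nabla_{y_1}\Phi_N$ and $\nabla_{x_1}\Phi_N$ from Lemma~\ref{l:2stage-bound-continuity}(iii) (averaging the uniform Lipschitz constants of $\nabla_{y_1}\psi_2$ and $\nabla_{x_1}\psi_2$ over the $N$ samples), and noting that the $\sigma$-strong concavity used in (i) is guaranteed for $x_1\in X_1$, so that the estimates apply along the iterates, which remain in $X_1$ because of the term $\bm 1_{X_1}$ in $\bar f$.
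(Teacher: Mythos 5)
Your proposal is correct and takes essentially the same route as the paper: both extract the Lipschitz constants of $\nabla_{x_1}\Phi_N$ and $\nabla_{y_1}\Phi_N$ from Lemma \ref{l:2stage-bound-continuity}(iii), add them to those of $\nabla\psi_1$ to form $L_{yx}=L^1_{yx}+L^2_{yx}$, $L_{xx}=L^1_{xx}+L^2_{xx}$, $L_{xy}=L^1_{xy}+L^2_{xy}$, and arrive at the same constant $L_\theta=L_{xx}+L_{xy}L_{yx}/\sigma$. The only difference is one of packaging: the paper dispatches (i)--(iii) by citing \cite[Lemmas 1--2 and Proposition 1]{cohen2025alternating}, whereas you inline the standard proofs of those cited results (the two-point strong-concavity argument for the maximizer map, Danskin's theorem for the gradient formula, and the triangle-inequality splitting for the Lipschitz bound), together with the correct caveat that the estimates are only needed on $X_1\times Y_1$, where the iterates remain.
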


\begin{proof}
By Lemma \ref{l:2stage-bound-continuity} (iii),  both
\[
\nabla_{x_1}\Phi_{N}(\cdot, \cdot) =\frac{1}{N}\sum_{i=1}^NT(\xi^i)^\top {\bm \pi}_{x_2}(\cdot, \cdot, \xi^i)
\]
and
\[
\nabla_{y_1} \Phi_{N}(\cdot, \cdot)
=\frac{1}{N}\sum_{i=1}^N-A(\xi^i)^\top {\bm \pi}_{y_2}(\cdot, \cdot, \xi^i)
\]
are Lipschitz continuous  over $X_1\times Y_1$. Then there exist $L^2_{xx}>0$, $L^2_{xy}>0$, $L^2_{yx}>0$ and $L^2_{yy}>0$  such that for any $x_1, \bar{x}_1\in X_1$ and $y_1,\bar{y}_1\in Y_1$,
    \[
    \begin{matrix}
    \| \nabla_{x_1}\Phi_{N}(x_1, y_1)  -\nabla_{x_1}\Phi_{N}(\bar{x}_1, y_1) \|\leq L^2_{xx}\|x_1-\bar{x}_1\|,\\
    \| \nabla_{x_1}\Phi_{N}(x_1, y_1)  -\nabla_{x_1}\Phi_{N}(x_1, \bar{y}_1) \|\leq L^2_{xy}\|y_1-\bar{y}_1\|,\\
     \| \nabla_{y_1}\Phi_{N}(x_1, y_1)  -\nabla_{y_1}\Phi_{N}(\bar{x}_1, y_1) \|\leq L^2_{yx}\|x_1-\bar{x}_1\|,\\
   \| \nabla_{y_1}\Phi_{N}(x_1, y_1)  -\nabla_{y_1}\Phi_{N}(x_1, \bar{y}_1) \| \leq L^2_{yy}\|y_1-\bar{y}_1\|.
    \end{matrix}
    \]
 Let $L_{xx}:=L_{xx}^1+L_{xx}^2$,    $L_{xy}:=L_{xy}^1+L_{xy}^2$, $L_{yx}:=L_{yx}^1+L_{yx}^2$, $L_{yy}:=L_{yy}^1+L_{yy}^2$ and $L_\theta=L_{xx}+\frac{L_{xy}L_{yx}}{\sigma}$.
    Then (i) is  from \cite[Lemma 1]{cohen2025alternating}, (ii) is from \cite[Proposition 1]{cohen2025alternating} and (iii) is  from \cite[Lemma 2]{cohen2025alternating}.
 \hfill $\square$
\end{proof}

Then we analyze the $y$-step (step 7) and establish the inexact relationship between $y_1^{k+1}$ and $y_1^k$, and the corresponding maximizers $\tilde{y}_1^N(x_1^k)$ and $\tilde{y}_1^N(x_1^{k+1})$.

\begin{lemma}\label{l:y-step}
Suppose that Assumption \ref{a:second-stage} holds. Let $\kappa=L_{yy}/\sigma$ and $\epsilon_y^k=\frac{2\|\delta_y^k\|}{\sigma + L_{yy}}.$  Then for any $\beta>0$ and every $k\in\mathbb{N}$, we have
\begin{equation}\label{eq:l1}
\|y_1^{k+1} - \tilde{y}_1^N(x_1^{k+1})\| \leq \sqrt{\kappa/(\kappa+1)} \|y_1^k - \tilde{y}_1^N(x_1^k)\| + \frac{L_{yx}}{\sigma} \|x_1^{k+1} - x_1^k\|
 + \epsilon^k_y,
\end{equation}

\begin{align}\label{eq:l2}
\|y_1^{k+1} - \tilde{y}_1^N(x_1^k)\| \leq & \sqrt{\kappa/(\kappa+1)}
\left( \|y_1^k - \tilde{y}_1^N(x_1^{k-1})\| + \frac{L_{yx}}{\sigma} \|x_1^k - x_1^{k-1}\| \right)  + \epsilon_y^k,
\end{align}

\begin{align}\label{eq:l3}
\|y_1^{k+1} - \tilde{y}_1^N(x_1^{k+1})\|^2
&\leq (1+\beta) \Big(\frac{\kappa + 1/2}{\kappa + 1} \|y_1^k - \tilde{y}_1^N(x_1^k)\|^2 + \frac{(2\kappa + 1)L_{yx}^2}{\sigma^2} \|x_1^{k+1} - x_1^k\|^2\Big)\nonumber \\
&\quad + (1+\beta^{-1})(\epsilon_y^k)^2,
\end{align}

\begin{align}\label{eq:l4}
\|y_1^{k+1} - \tilde{y}_1^N(x^k)\|^2 &\leq \displaystyle{(1+\beta)\frac{\kappa + 1/2}{\kappa + 1} \left( \|y_1^k - \tilde{y}_1^N(x_1^{k-1})\|^2 + \frac{2\kappa L_{yx}^2}{\sigma^2} \|x_1^k - x_1^{k-1}\|^2 \right)}\nonumber\\
&\quad +\displaystyle{(1+\beta^{-1})(\epsilon_y^k)^2}
\end{align}
\end{lemma}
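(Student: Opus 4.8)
\emph{Proof plan.} The idea is to read step~7 of the algorithm as an \emph{inexact} proximal gradient ascent step for the maximization defining $\tilde y_1^N(x_1^k)$ in \eqref{eq:y-max}, and to split the resulting error into three independent contributions: a one-step contraction toward the exact maximizer, the gradient inexactness $\delta_y^k$, and the displacement of the maximizer caused by $x_1^k\to x_1^{k+1}$. First I would rewrite the update as $y_1^{k+1}=\mathrm{prox}_{\beta_1^y\bar g}\bigl(y_1^k+\beta_1^y(\nabla_{y_1}\psi_1(x_1^k,y_1^k)+\tilde v_y^k)\bigr)$ and introduce the exact step $\hat y^{k+1}$ obtained by replacing $\tilde v_y^k$ with the true gradient component $v_y^k=\nabla_{y_1}\Phi_N(x_1^k,y_1^k)$, so that $\delta_y^k=\tilde v_y^k-v_y^k$.

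Second, I would establish the two first-order estimates. The objective in \eqref{eq:y-max} is $\sigma$-strongly concave (from the $\sigma$-strong convexity of $g-\psi_1(x_1^k,\cdot)$ together with the concavity of $\Phi_N(x_1^k,\cdot)$) with $L_{yy}$-Lipschitz smooth part, so the \emph{exact} step contracts toward its unique maximizer $\tilde y_1^N(x_1^k)$ with factor $\sqrt{\kappa/(\kappa+1)}$, $\kappa=L_{yy}/\sigma$; this is the exact-case estimate underlying \cite[Lemma~1]{cohen2025alternating}. Since the proximal subproblem is $\frac{1}{\beta_1^y}$-strongly convex, the exact and inexact updates differ by at most $\beta_1^y\|\delta_y^k\|$, which for $\beta_1^y=2/(\sigma+L_{yy})$ is exactly $\epsilon_y^k=2\|\delta_y^k\|/(\sigma+L_{yy})$. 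The triangle inequality then gives $\|y_1^{k+1}-\tilde y_1^N(x_1^k)\|\le\sqrt{\kappa/(\kappa+1)}\,\|y_1^k-\tilde y_1^N(x_1^k)\|+\epsilon_y^k$. Both \eqref{eq:l1} and \eqref{eq:l2} follow by inserting the maximizer drift, bounded through the $(L_{yx}/\sigma)$-Lipschitz continuity of $\tilde y_1^N$ from Lemma~\ref{l:properties}(i), on the appropriate side: in \eqref{eq:l1} the term $\|\tilde y_1^N(x_1^k)-\tilde y_1^N(x_1^{k+1})\|$ is added \emph{after} the contraction, so its $x$-drift carries no contraction factor, whereas in \eqref{eq:l2} one first replaces $\tilde y_1^N(x_1^k)$ by $\tilde y_1^N(x_1^{k-1})$ \emph{inside} the contracted term, so there the $x$-drift is multiplied by $\sqrt{\kappa/(\kappa+1)}$.

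Third, the squared bounds \eqref{eq:l3} and \eqref{eq:l4} come from squaring \eqref{eq:l1} and \eqref{eq:l2} and applying Young's inequality twice. I would peel off the error term first via $(A+\epsilon_y^k)^2\le(1+\beta)A^2+(1+\beta^{-1})(\epsilon_y^k)^2$, which produces the factor $(1+\beta^{-1})(\epsilon_y^k)^2$, and then absorb the cross term inside $A^2$ by a second AM--GM step whose parameter is tuned to the announced constants. For \eqref{eq:l3} this second step is $2\sqrt{\kappa/(\kappa+1)}\,ab\le\frac{1}{2(\kappa+1)}a^2+2\kappa b^2$ with $a=\|y_1^k-\tilde y_1^N(x_1^k)\|$ and $b=\frac{L_{yx}}{\sigma}\|x_1^{k+1}-x_1^k\|$, yielding the coefficients $\frac{\kappa+1/2}{\kappa+1}$ and $2\kappa+1$; for \eqref{eq:l4} it reduces to the perfect square $\frac12(a-2\kappa b)^2\ge0$, yielding $\frac{\kappa+1/2}{\kappa+1}$ and $2\kappa$.

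The analytic content, namely the one-step contraction, is inherited from \cite{cohen2025alternating}; I expect the main obstacle to be the bookkeeping that keeps all constants tight and mutually consistent. Concretely, I must verify that the step size used in the algorithm is consistent with both the cited contraction factor $\sqrt{\kappa/(\kappa+1)}$ and the calibrated error level $\epsilon_y^k$ (so that the prox-subproblem strong convexity delivers exactly $\beta_1^y\|\delta_y^k\|$), and that the Young parameters are chosen so the cross terms are absorbed with equality — the perfect-square identity in \eqref{eq:l4} and the matched AM--GM in \eqref{eq:l3} — rather than with slack, since any looseness would alter the stated coefficients and could break the later descent/KL analysis in Sections~5.2--5.3. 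Isolating the inexactness into the single additive term $\epsilon_y^k$, without letting it couple to the contraction, is the key step that makes the perturbed gradient-like descent framework of Definition~\ref{d:pglds} applicable.
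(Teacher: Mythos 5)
Your surrounding architecture matches the paper's: the one-step inexact contraction, then the triangle inequality with the $(L_{yx}/\sigma)$-Lipschitz drift of $\tilde{y}_1^N$ placed \emph{outside} the contraction for \eqref{eq:l1} and \emph{inside} it for \eqref{eq:l2}, then squaring with Young's inequality applied twice with parameters $1/(2\kappa)$ and $\beta$ — your coefficient checks for \eqref{eq:l3} and \eqref{eq:l4}, including the perfect-square identity $\tfrac{1}{2(\kappa+1)}(a-2\kappa b)^2\ge 0$ behind \eqref{eq:l4}, are exactly what the paper does. The genuine gap is in how you produce the core estimate $\|y_1^{k+1}-\tilde{y}_1^N(x_1^k)\|\le\sqrt{\kappa/(\kappa+1)}\,\|y_1^k-\tilde{y}_1^N(x_1^k)\|+\epsilon_y^k$. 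You split the update into an exact prox-gradient step plus a prox perturbation of size $\beta_1^y\|\delta_y^k\|$, and then calibrate $\beta_1^y=2/(\sigma+L_{yy})$ so that the perturbation equals $\epsilon_y^k$. But the contraction factor $\sqrt{\kappa/(\kappa+1)}$ you import is tied to the step size $1/L_{yy}$: it comes from combining the prox inequality at modulus $L_{yy}$ with the $\sigma$-strong convexity of the subproblem. With your step $2/(\sigma+L_{yy})$, whenever $\sigma<L_{yy}$ the step exceeds $1/L_{yy}$ and that estimate is unavailable, while for $\sigma>L_{yy}$ it yields the strictly worse factor $\sqrt{(\sigma+L_{yy})/(3\sigma+L_{yy})}$. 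Conversely, if you use the consistent step $1/L_{yy}$ (the choice implicit in the paper's proof), nonexpansiveness of the prox gives perturbation $\|\delta_y^k\|/L_{yy}$, which is bounded by $\epsilon_y^k=2\|\delta_y^k\|/(\sigma+L_{yy})$ only under $\sigma\le L_{yy}$ — a relation the paper never assumes, and which can fail here since the strong convexity modulus $\sigma$ comes from the nonsmooth $g(\cdot)-\psi_1(x_1,\cdot)$ and is not dominated by the smooth constant $L_{yy}$. So your exact-step/perturbation decomposition cannot deliver the stated pair of constants with any single step size.

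The paper avoids this by keeping the inexactness at the \emph{function-value} level rather than at the iterate level: it sandwiches $\Gamma_k(y_1^{k+1})-\Gamma_k(\tilde{y}_1^N(x_1^k))$ from below by $\frac{\sigma}{2}\|y_1^{k+1}-\tilde{y}_1^N(x_1^k)\|^2$ (strong convexity of $\Gamma_k$) and from above by $\frac{L_{yy}}{2}\bigl(\|y_1^k-\tilde{y}_1^N(x_1^k)\|^2-\|y_1^{k+1}-\tilde{y}_1^N(x_1^k)\|^2\bigr)+\|\delta_y^k\|\,\|y_1^{k+1}-\tilde{y}_1^N(x_1^k)\|$ (the three-point prox inequality of \cite[Lemma 3.1]{TeboulleM18} applied with the perturbed linear term, plus the descent-type bound \cite[Lemma 3.3]{TeboulleM18}), which gives the quadratic inequality
\begin{equation*}
\|y_1^{k+1}-\tilde{y}_1^N(x_1^k)\|^2\le\frac{\kappa}{\kappa+1}\,\|y_1^k-\tilde{y}_1^N(x_1^k)\|^2+\epsilon_y^k\,\|y_1^{k+1}-\tilde{y}_1^N(x_1^k)\|,
\end{equation*}
and then completes the square. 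The denominator $\sigma+L_{yy}$ in $\epsilon_y^k$ arises from dividing by $(\sigma+L_{yy})/2$ at this stage — not from a step size — which is precisely why the paper obtains both constants simultaneously with no relation between $\sigma$ and $L_{yy}$. To repair your proof, replace the exact-step/perturbation split by this direct argument; otherwise you must either add the unstated hypothesis $\kappa\ge1$ or weaken the error constant in the lemma, which would propagate into the descent constants of Sections 5.2--5.3.
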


\begin{proof}
Consider the following problem:
\begin{equation}\label{eq:l311}
\min_{y_1\in\mathbb{R}^{m_1}} \; \Gamma_k(y_1):=\bar{g}(y_1) - \psi_1(x_1^k, y_1) -  \Phi_{N}(x_1^k, y_1).
\end{equation}
Then $\tilde{y}_1^N(x_1^k)=\argmin_{y_1\in \mathbb{R}^{m_1}} \Gamma_k(y_1)$ is unique. By $\sigma$-strong convexity of $\Gamma_k$,  it follows that
\[
\Gamma_k(y_1^{k+1}) - \Gamma_k(\tilde{y}_1^N(x_1^k))\geq \langle 0, y_1^{k+1} -  \tilde{y}_1^N(x_1^k)\rangle +\frac{\sigma}{2}\|y_1^{k+1} -  \tilde{y}_1^N(x_1^k)\|^2.
\]
Note that $y_1^{k+1}\in Y_1$ and $\tilde{y}_1^N(x_1^k)\in Y_1$.  Applying \cite[Lemma 3.1]{TeboulleM18} to $\bar{g}(y_1) - \langle \nabla_{y_1} \psi_1(x_1^k, y_1^k) + v_y^k+\delta_y^k, y_1\rangle$ yields:
\begin{equation}\label{eq:l31}
\begin{aligned}
&\frac{2}{L_{yy}}\left[\bar{g}(y_1^{k+1}) -\bar{g}(\tilde{y}_1^N(x_1^k))- \langle \nabla_{y_1} \psi_1(x_1^k, y_1^k) + v_y^k+\delta_y^k, y_1^{k+1}-\tilde{y}_1^N(x_1^k)\rangle\right]\\
&\leq \|y_1^k - \tilde{y}_1^N(x_1^k)\|^2-\|y_1^{k+1} - \tilde{y}_1^N(x_1^k)\|^2-\|y_1^{k+1} - y_1^k\|^2.
\end{aligned}
\end{equation}
Applying \cite[Lemma 3.3]{TeboulleM18} to $\psi_1(x_1^k, y_1) + \Phi_{N}(x_1^k, y_1)$ gives:
\begin{equation}\label{eq:l33}
\begin{aligned}
&\psi_1(x_1^k, \tilde{y}_1^N(x_1^k)) + \Phi_{N}(x_1^k, \tilde{y}_1^N(x_1^k))-\left(\psi_1(x_1^k, y_1^{k+1}) + \Phi_{N}(x_1^k, y_1^{k+1})\right)\\
&\leq \langle -\nabla_{y_1}\psi_1(x_1^k, y_1^k) -v_y^k, y_1^{k+1}-\tilde{y}_1^N(x_1^k) \rangle.
\end{aligned}
\end{equation}
Combining \eqref{eq:l311}-\eqref{eq:l33}, we obtain:
\[
\begin{aligned}
&\frac{L_{yy}}{2}\left(\|y_1^k - \tilde{y}_1^N(x_1^k)\|^2-\|y_1^{k+1} - \tilde{y}_1^N(x_1^k)\|^2\right)  + \|\delta_y^k\|\|y_1^{k+1} - \tilde{y}_1^N(x_1^k)\|\\
&\geq\Gamma_k(y_1^{k+1}) - \Gamma_k(\tilde{y}_1^N(x_1^k))\\
&\geq \frac{\sigma}{2} \|y_1^{k+1} - \tilde{y}_1^N(x_1^k)\|^2.
\end{aligned}
\]
Consequently,
\[
\|y_1^{k+1} - \tilde{y}_1^N(x_1^k)\|^2\leq \frac{L_{yy}}{\sigma + L_{yy}}\|y_1^{k} - \tilde{y}_1^N(x_1^k)\|^2 + \epsilon_y^k\|y_1^{k+1} - \tilde{y}_1^N(x_1^k)\|,
\]
which implies 
\[
\left(\|y_1^{k+1} - \tilde{y}_1^N(x_1^k)\|-\frac{1}{2}\epsilon_y^k\right)^2\leq \left(\frac{\sqrt{L_{yy}}}{\sqrt{\sigma + L_{yy}}}\|y_1^{k} - \tilde{y}_1^N(x_1^k)\|+ \frac{1}{2}\epsilon_y^k\right)^2.
\]
Then the above inequality implies
\[
\|y_1^{k+1} - \tilde{y}_1^N(x_1^k)\|\leq \sqrt{\kappa/(\kappa+1)} \|y_1^{k} - \tilde{y}_1^N(x_1^k)\| + \epsilon_y^k.
\]
Combining the triangle inequality, the Lipschitz continuity of $\tilde{y}_1^N(\cdot)$ (Lemma \ref{l:properties} (i)) and the above inequality, we have \eqref{eq:l1} and \eqref{eq:l2} as follows:
\begin{align*}
\|y_1^{k+1} - \tilde{y}_1^N(x_1^{k+1})\|
&\leq \|y_1^{k+1} - \tilde{y}_1^N(x_1^k)\| + \|\tilde{y}_1^N(x_1^k) - \tilde{y}_1^N(x_1^{k+1})\| \nonumber \\
&\leq \sqrt{\frac{\kappa}{\kappa+1}} \|y_1^{k} - \tilde{y}_1^N(x_1^k)\| + \frac{L_{yx}}{\sigma} \|x_1^{k+1} - x_1^k\|+ \epsilon_y^k \nonumber
\end{align*}
and
\begin{align*}
\|y_1^{k+1} - \tilde{y}_1^N(x_1^k)\|
&\leq \sqrt{\frac{\kappa}{\kappa+1}} \|y_1^k - \tilde{y}_1^N(x_1^k)\| + \epsilon_y^k \nonumber \\
&\leq \sqrt{\frac{\kappa}{\kappa+1}} \Bigl( \|y_1^k - \tilde{y}_1^N(x_1^{k-1})\|
+ \|\tilde{y}_1^N(x_1^k) - \tilde{y}_1^N(x_1^{k-1})\| \Bigr) + \epsilon_y^k \nonumber \\
&\leq \sqrt{\frac{\kappa}{\kappa+1}} \left( \|y_1^k - \tilde{y}_1^N(x_1^{k-1})\| + \frac{L_{yx}}{\sigma} \|x_1^k - x_1^{k-1}\| \right)  + \epsilon_y^k.
\end{align*}
Finally,  squaring both sides of \eqref{eq:l1} and \eqref{eq:l2}, and applying $(a+b)^2\leq (1+\gamma)a^2 + (1+\gamma^{-1})b^2$ twice with $\gamma=1/2\kappa$ and $\beta$, respectively, we obtain \eqref{eq:l3} and \eqref{eq:l4}.
 \hfill $\square$
\end{proof}

Next we consider the $x$-step (step 8) from view of function value gap and subgradient bound in the following lemma.

\begin{lemma}\label{l:x-step}
Suppose that Assumption \ref{a:second-stage} holds. Let $\{(x^k, y^k)\}_{k\in \mathbb{N}}$ be the sequence generated by IPPGDA algorithm. Then
\begin{itemize}
\item[(i)] 
for every $k\geq0$, we have
\begin{align}\label{eq:446}
\Psi_N(x_1^{k+1}) - \Psi_N(x_1^k) &\leq -\frac{1}{2} \left( \frac{1}{\beta_1^x} - L_{xy}^2 - L_{\theta}-1 \right) \|x_1^{k+1} - x_1^k\|^2 \nonumber\\
&+ \frac{1}{2} \|y_1^k - \tilde{y}_1^N(x_1^k)\|^2+\frac{1}{2}\|\delta_x^k\|^2;
\end{align}

\item[(ii)] there exist $M>0$ and $\zeta^{k+1}\in\partial \Psi_N(x^{k+1})$, which satisfy
\begin{equation}\label{eq:447}
\|\zeta^{k+1}\|\leq M(\|x^{k+1} - x^{k}\| + \|y^{k} - \tilde{y}_1^N(x^k)\|)+\|\delta_x^k\|.
\end{equation}
\end{itemize}
\end{lemma}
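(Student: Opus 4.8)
The plan is to read step 8 as an inexact proximal step on the single-variable function $\Psi_N = \bar f + \theta_N$, and to control throughout the gap between the gradient the algorithm actually uses, namely $g^k := \nabla_{x_1}\psi_1(x_1^k,y_1^k)+\tilde v_x^k$, and the true gradient $\nabla\theta_N(x_1^k)=\nabla_{x_1}\psi_1(x_1^k,\tilde y_1^N(x_1^k))+\nabla_{x_1}\Phi_{N}(x_1^k,\tilde y_1^N(x_1^k))$ supplied by Lemma \ref{l:properties}(ii). Writing $\tilde v_x^k=v_x^k+\delta_x^k$ with $v_x^k=\nabla_{x_1}\Phi_{N}(x_1^k,y_1^k)$, the mismatch $\nabla\theta_N(x_1^k)-g^k$ splits into a part coming from evaluating $\psi_1$ and $\Phi_{N}$ at $y_1^k$ instead of at the exact maximizer $\tilde y_1^N(x_1^k)$, plus the inexactness term $-\delta_x^k$. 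Since the first part involves only the $y$-argument, it is bounded by $L_{xy}\|y_1^k-\tilde y_1^N(x_1^k)\|$ with $L_{xy}=L_{xy}^1+L_{xy}^2$, so $\|\nabla\theta_N(x_1^k)-g^k+\delta_x^k\|\le L_{xy}\|y_1^k-\tilde y_1^N(x_1^k)\|$.

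For part (i), I first apply the descent lemma to $\theta_N$, which is legitimate because $\nabla\theta_N$ is $L_\theta$-Lipschitz by Lemma \ref{l:properties}(iii): this gives $\theta_N(x_1^{k+1})\le\theta_N(x_1^k)+\langle\nabla\theta_N(x_1^k),x_1^{k+1}-x_1^k\rangle+\tfrac{L_\theta}{2}\|x_1^{k+1}-x_1^k\|^2$. Next, since $x_1^{k+1}$ globally minimizes the step-8 subproblem, comparing its objective value with the value at $x_1^k$ yields $\bar f(x_1^{k+1})\le \bar f(x_1^k)-\langle g^k,x_1^{k+1}-x_1^k\rangle-\tfrac{1}{2\beta_1^x}\|x_1^{k+1}-x_1^k\|^2$. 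Adding the two inequalities, the cross term becomes $\langle\nabla\theta_N(x_1^k)-g^k,\,x_1^{k+1}-x_1^k\rangle$, which I split using the decomposition above and bound by two applications of Young's inequality: the mismatch part by $\tfrac12\|y_1^k-\tilde y_1^N(x_1^k)\|^2+\tfrac{L_{xy}^2}{2}\|x_1^{k+1}-x_1^k\|^2$ and the $\delta_x^k$ part by $\tfrac12\|\delta_x^k\|^2+\tfrac12\|x_1^{k+1}-x_1^k\|^2$. Collecting the coefficients $\tfrac{L_\theta}{2}-\tfrac{1}{2\beta_1^x}+\tfrac{L_{xy}^2}{2}+\tfrac12$ in front of $\|x_1^{k+1}-x_1^k\|^2$ reproduces exactly the factor $-\tfrac12\big(\tfrac{1}{\beta_1^x}-L_{xy}^2-L_\theta-1\big)$ in \eqref{eq:446}.

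For part (ii), I start from the first-order optimality condition of step 8, which furnishes some $s^{k+1}\in\partial\bar f(x_1^{k+1})$ with $s^{k+1}=-g^k-\tfrac{1}{\beta_1^x}(x_1^{k+1}-x_1^k)$. Because $\theta_N$ is continuously differentiable, $\zeta^{k+1}:=s^{k+1}+\nabla\theta_N(x_1^{k+1})\in\partial\Psi_N(x_1^{k+1})$ is an admissible subgradient. Substituting $s^{k+1}$ and $g^k$, I write $\zeta^{k+1}=\big[\nabla\theta_N(x_1^{k+1})-\nabla_{x_1}\psi_1(x_1^k,y_1^k)-\nabla_{x_1}\Phi_{N}(x_1^k,y_1^k)\big]-\delta_x^k-\tfrac{1}{\beta_1^x}(x_1^{k+1}-x_1^k)$. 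The bracket I bound by telescoping through the intermediate points $(x_1^{k+1},\tilde y_1^N(x_1^{k+1}))\to(x_1^k,\tilde y_1^N(x_1^{k+1}))\to(x_1^k,\tilde y_1^N(x_1^k))\to(x_1^k,y_1^k)$, using the $x$- and $y$-Lipschitz constants together with the $(L_{yx}/\sigma)$-Lipschitz continuity of $\tilde y_1^N$ from Lemma \ref{l:properties}(i); this produces $\big(L_{xx}+\tfrac{L_{xy}L_{yx}}{\sigma}\big)\|x_1^{k+1}-x_1^k\|+L_{xy}\|y_1^k-\tilde y_1^N(x_1^k)\|$. Adding the prox term $\tfrac{1}{\beta_1^x}\|x_1^{k+1}-x_1^k\|$ and $\|\delta_x^k\|$, and setting $M:=\max\{L_{xx}+\tfrac{L_{xy}L_{yx}}{\sigma}+\tfrac{1}{\beta_1^x},\,L_{xy}\}$, gives \eqref{eq:447}.

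The main obstacle is bookkeeping rather than conceptual. The coefficients in \eqref{eq:446} are prescribed exactly (weight $\tfrac12$ on both $\|y_1^k-\tilde y_1^N(x_1^k)\|^2$ and $\|\delta_x^k\|^2$, together with the specific $L_{xy}^2$, $L_\theta$, and $-1$ contributions), so the Young weights are forced: the mismatch must be split with weight $L_{xy}$ and the error with unit weight, and the $-\tfrac{1}{2\beta_1^x}$ must come from the minimizer comparison, not from strong convexity of $\bar f$ (which need not hold and would anyway give the wrong factor). The one delicate point is keeping the evaluation points straight, since the algorithm's gradient lives at $(x_1^k,y_1^k)$ while $\nabla\theta_N$ is evaluated at the exact maximizers $\tilde y_1^N(x_1^k)$ and $\tilde y_1^N(x_1^{k+1})$; the telescoping through intermediate points and the Lipschitz continuity of $\tilde y_1^N$ are exactly what reconcile them.
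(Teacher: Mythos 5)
Your proof is correct and follows essentially the same route the paper takes: the paper's own proof simply defers to \cite[Lemmas 7 and 8]{cohen2025alternating} and notes that the only change is the extra $\delta_x^k$ in the $x$-step, producing the $\frac{1}{2}\|\delta_x^k\|^2$ term in \eqref{eq:446} and the $\|\delta_x^k\|$ term in \eqref{eq:447}, which is exactly how your error term enters. Your reconstruction (descent lemma on $\theta_N$ plus global minimality of the prox subproblem with Young splittings for part (i); prox optimality plus the sum rule $\partial\Psi_N=\partial\bar f+\nabla\theta_N$ and telescoping through $\tilde y_1^N$ for part (ii)) reproduces the stated constants exactly, so it is a faithful, fully detailed version of the argument the paper omits.
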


\begin{proof}
The proof of the lemma is similar as \cite[Lemmas 7 and 8]{cohen2025alternating}.
Note that the key difference is that there is a $\delta_x^k$ in  $x$-step (step 8) as follows
\[
x_1^{k+1} = \argmin_{x_1\in \mathbb{R}^{n_1}} \left\{\bar{f}(x_1)  +\langle \nabla_{x_1}\psi_1(x_1^k, y_1^k) + v_x^k+\delta_x^k, x_1-x_1^{k}\rangle + \frac{1}{2\beta_1^x}\|x_1-x_1^{k}\|^2\right\}.
\]
This leads to the $\frac{1}{2}\|\delta_x^k\|^2$ in \eqref{eq:446} and $\|\delta_x^k\|$ in \eqref{eq:447}. Since the remainder of the proof is essentially the same as in \cite[Lemmas 7 and 8]{cohen2025alternating}, we omit the details of the proof.
 \hfill $\square$
\end{proof}

\begin{remark}
Lemmas \ref{l:y-step} and \ref{l:x-step} are extensions of \cite[Lemmas 6, 7, and 8]{cohen2025alternating}. In \cite{cohen2025alternating}, the authors analyzed the divergence caused by using an inexact proximal gradient step:
\[
y_1^{k+1} = \argmax_{y_1\in \mathbb{R}^{m_1}} \left\{\left\langle \nabla_{y_1} \psi_1(x_1^k, y_1^k) + v_y^k, y_1-y_1^{k}\right\rangle-\bar{g}(y_1) - \frac{1}{2\beta_1^y}\|y_1-y_1^{k}\|^2\right\}
\]
 to approximate the inner maximization \eqref{eq:y-max}. Different from the PPGDA in \cite{cohen2025alternating}, we can not calculate $v_y^k$ in step 7  and $v_x^k$ in step 8 of Algorithm 1 exactly. Instead, we do inexact  proximal gradient steps with inexact terms  $\tilde{v}_x^k$ and $\tilde{v}_y^k$.
\end{remark}

\subsection{Perturbed gradient-like descent sequence}
In this section, we show that the sequence $\{(x_1^k, \nu_k)\}$ is a perturbed gradient-like descent sequence,  where $\nu_k$ is defined as
\begin{equation}\label{eq:nuk}
\nu_k:=\sqrt{s\|y_1^k-\tilde{y}_1^N(x^k)\|^2+(\delta^{k-1})^2},
\end{equation}
where $s>0$,  $\delta^k\geq\max\{\|\delta_x^k\|,\|\delta_y^k\|\}$ and $\{\delta^k\}\downarrow0$.
 We consider the perturbed sufficient descent property firstly. 

\begin{lemma}\label{l:condition1}
 Let $\{(x_1^k, y_1^k)\}_{k\in\mathbb{N}}$ be the sequence generated by Algorithm 1. Suppose that Assumption \ref{a:second-stage} holds and there is
$\beta\in(0, \frac{1}{2\kappa+1})$ such that
\[
\frac{1}{\beta_1^x} \geq L_{xy}^2+L_{\theta} +1 +\max\left\{\bar{\eta}, \frac{(2\kappa+2)\bar{\eta}}{(1-\beta-2\beta\kappa)}\right\}>0,
\]
\[
s\in \left[\frac{2\kappa+2}{1-\beta-2\beta\kappa}, \frac{(\frac{1}{\beta_1^x} - L_{xy}^2 - L_{\theta}-1)}{\bar{\eta}}\right],
\]
and
\begin{equation}\label{eq:l4-01}
(\delta^{k-1})^2\geq \frac{\kappa+1}{(1+\beta)(\kappa+1/2)}\left(\frac{4s(1+\beta)}{\beta(\sigma +L_{yy})^2}+2\right)(\delta^k)^2,
\end{equation}
where $\bar{\eta}:=\frac{(1+\beta)(2\kappa+1)L_{yx}^2}{\sigma^2}$.
Then there exists $c_1>0$ such that for every $k\in\mathbb{N}$, \eqref{eq:psd} (Condition~1 in Definition \ref{d:pglds}) holds.
\end{lemma}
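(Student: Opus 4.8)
The plan is to treat $\mathcal{E}_k := \Psi_N(x_1^k) + \tfrac12\nu_k^2$ as a Lyapunov function and to bound the one-step decrease $\mathcal{E}_k - \mathcal{E}_{k+1}$ from below by a positive multiple of $\|x_1^{k+1}-x_1^k\|^2 + \nu_k^2$, which is exactly the content of \eqref{eq:psd}. Writing $d_k := \|y_1^k - \tilde{y}_1^N(x_1^k)\|^2$ and $p_k := \|x_1^{k+1}-x_1^k\|^2$, definition \eqref{eq:nuk} gives $\nu_k^2 - \nu_{k+1}^2 = s(d_k - d_{k+1}) + (\delta^{k-1})^2 - (\delta^k)^2$, so that
\[
\mathcal{E}_k - \mathcal{E}_{k+1} = -\bigl(\Psi_N(x_1^{k+1}) - \Psi_N(x_1^k)\bigr) + \tfrac{s}{2}(d_k - d_{k+1}) + \tfrac12\bigl((\delta^{k-1})^2 - (\delta^k)^2\bigr).
\]
I would substitute the descent estimate \eqref{eq:446} of Lemma \ref{l:x-step}(i) for the first term and the recursion \eqref{eq:l3} of Lemma \ref{l:y-step}, which upper-bounds $d_{k+1}$ and hence lower-bounds $d_k - d_{k+1}$, for the second.

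Collecting terms yields $\mathcal{E}_k - \mathcal{E}_{k+1} \ge A\,p_k + B\,d_k + R_k$, where $A := \tfrac12\bigl(\tfrac{1}{\beta_1^x} - L_{xy}^2 - L_{\theta} - 1 - s\bar\eta\bigr)$, $B := \tfrac12\bigl(s(1-\rho) - 1\bigr)$ with $\rho := (1+\beta)\tfrac{\kappa+1/2}{\kappa+1}$, and $R_k$ gathers the $\delta$-terms. The three hypotheses map directly onto nonnegativity of these coefficients. The upper bound $s \le \bigl(\tfrac{1}{\beta_1^x}-L_{xy}^2-L_{\theta}-1\bigr)/\bar\eta$ gives $A \ge 0$. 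A short computation gives $1-\rho = \tfrac{1-\beta(2\kappa+1)}{2(\kappa+1)}$, which is positive exactly because $\beta < \tfrac{1}{2\kappa+1}$; hence $\tfrac{1}{1-\rho} = \tfrac{2\kappa+2}{1-\beta-2\beta\kappa}$, and the lower bound on $s$ forces $s(1-\rho) \ge 1$, i.e.\ $B \ge 0$.

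For $R_k$ I would use $\|\delta_x^k\|^2 \le (\delta^k)^2$ together with $\epsilon_y^k = 2\|\delta_y^k\|/(\sigma+L_{yy})$ and $1+\beta^{-1} = (1+\beta)/\beta$ to obtain $(1+\beta^{-1})(\epsilon_y^k)^2 \le \tfrac{4(1+\beta)}{\beta(\sigma+L_{yy})^2}(\delta^k)^2$, which gives $R_k \ge \tfrac12(\delta^{k-1})^2 - \tfrac12 C(\delta^k)^2$ with $C := 2 + \tfrac{4s(1+\beta)}{\beta(\sigma+L_{yy})^2}$. Condition \eqref{eq:l4-01} is precisely $(\delta^{k-1})^2 \ge \rho^{-1}C(\delta^k)^2$, so $C(\delta^k)^2 \le \rho(\delta^{k-1})^2$ and therefore $R_k \ge \tfrac{1-\rho}{2}(\delta^{k-1})^2 \ge 0$. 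Combining, $\mathcal{E}_k - \mathcal{E}_{k+1} \ge A\,p_k + B\,d_k + \tfrac{1-\rho}{2}(\delta^{k-1})^2$; since the interval for $s$ is nonempty (guaranteed by the $\max\{\cdot,\cdot\}$ term in the hypothesis on $1/\beta_1^x$), taking $s$ in its interior makes $A$ and $B$ strictly positive, and $c_1 := \min\{A,\,B/s,\,\tfrac{1-\rho}{2}\} > 0$ delivers $\mathcal{E}_k - \mathcal{E}_{k+1} \ge c_1(p_k + \nu_k^2)$, which is \eqref{eq:psd}.

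The main obstacle I anticipate is the bookkeeping that forces the error block $R_k$ to be nonnegative: it is the precise factor $\rho^{-1}$ in \eqref{eq:l4-01} that upgrades the crude bound $R_k \ge \tfrac12(\delta^{k-1})^2 - \tfrac12 C(\delta^k)^2$ into a strictly positive multiple of $(\delta^{k-1})^2$, so that $c_1$ can be taken positive rather than merely nonnegative. Aligning the constants of \eqref{eq:l3} and the definition of $\epsilon_y^k$ with $C$, and checking that the $s$-interval is nonempty, are the points that require the most care; the remainder is routine substitution and term collection.
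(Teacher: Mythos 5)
Your proposal is correct and takes essentially the same route as the paper's proof: both use the Lyapunov function $\Psi_N(x_1^k)+\tfrac12\nu_k^2$, substitute \eqref{eq:446} and \eqref{eq:l3}, and invoke \eqref{eq:l4-01} to neutralize the error terms, and your constants coincide with the paper's ($A=t_1/2$, $B/s=t_2/2$, and since $(1-\rho)/2>t_2/2$ your $c_1=\min\{A,B/s,(1-\rho)/2\}$ equals the paper's $\min\{t_1,t_2\}/2$). The only differences are cosmetic bookkeeping---you keep the $d_k$-block and $\delta$-block separate before reassembling $\nu_k^2$, while the paper folds them into $\rho\nu_k^2$ directly---and your remark about taking $s$ in the interior of its interval is, if anything, slightly more careful than the paper, which asserts $t_1,t_2>0$ while nominally permitting the closed endpoints.
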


\begin{proof}
For every $k\in\mathbb{N}$, let
\[
\Delta_k:=\left(\Psi_N(x_1^{k+1})+\frac{1}{2}\nu_{k+1}^2\right)-\left(\Psi_N(x_1^k)+\frac{1}{2}\nu_k^2\right).
\]
Then by Lemma \ref{l:x-step} (i), we have
\begin{align}\label{eq:l4-1}
\Delta_k\leq \frac{1}{2}\left(   L_{xy}^2 + L_{\theta}+1 -\frac{1}{\beta_1^x}\right) \|x_1^{k+1} - x_1^k\|^2 +\frac{1}{2}\|\delta_x^k\|^2 + \frac{1}{2s} \nu_k^2+\frac{1}{2}(\nu_{k+1}^2-\nu_k^2).
\end{align}
Moreover, by applying \eqref{eq:l3} in Lemma \ref{l:y-step}, we have
\begin{equation}\label{eq:l4-2}
\begin{array}{lll}
\nu_{k+1}^2+\|\delta_x^k\|^2&=& s\|y_1^{k+1}-\tilde{y}_1^N(x^{k+1})\|^2+\|\delta_y^{k}\|^2 +\|\delta_x^k\|^2\\
&\leq&\displaystyle{\frac{(1+\beta)(\kappa+1/2)}{\kappa+1}s\|y_1^k-\tilde{y}_1^N(x_1^k)\|^2}+\displaystyle{\left(\frac{4s(1+\beta)}{\beta(\sigma +L_{yy})^2}+2\right)(\delta^{k})^2}\\
&+& \displaystyle{\bar{\eta}s\|x_1^k-x_1^{k+1}\|^2}.
\end{array}
\end{equation}
Substituting $(\delta^{k-1})^2$ (in \eqref{eq:l4-01}) into   \eqref{eq:l4-2}, we have
\begin{equation}\label{eq:l4-3}
\begin{array}{lll}
\nu_{k+1}^2 &\leq& \displaystyle{\frac{(1+\beta)(\kappa+1/2)}{\kappa+1}\left(s\|y_1^k-\tilde{y}_1^N(x_1^k)\|^2+(\delta^{k-1})^2 \right) + \bar{\eta}s\|x_1^k-x_1^{k+1}\|^2}\\
&=& \displaystyle{\bar{\eta}s\|x_1^k-x_1^{k+1}\|^2}.
\end{array}
\end{equation}
Combining \eqref{eq:l4-1} and \eqref{eq:l4-3}, we obtain
\[
\Delta_k\leq -\frac{t_1}{2}\|x_1^{k+1} - x_1^k\|^2-\frac{t_2}{2}\nu_k^2
\]
with
\[
t_1=\frac{1}{\beta_1^x} - L_{xy}^2 - L_{\theta}-1 - \bar{\eta}s
\]
and
\[ t_2=1-\frac{1}{s}-\frac{(1+\beta)(\kappa+1/2)}{\kappa+1}=\frac{s(\frac{1-\beta}{2}-\beta\kappa) - \kappa-1}{s(\kappa+1)}.
\]
Since $\beta<\frac{1}{1+2\kappa}$ and $s>\frac{2\kappa+2}{1-\beta-2\beta\kappa}>0$, we have $t_2>0$. Moreover, since there exists sufficiently small $\beta_1^x>0$ such that
\[
\frac{1}{\beta_1^x} \geq L_{xy}^2+L_{\theta} +1 +\max\left\{\bar{\eta}, \frac{(2\kappa+2)\bar{\eta}}{(1-\beta-2\beta\kappa)}\right\}>0,
\]
we have $t_1>0$, $\frac{2\kappa+2}{1-\beta-2\beta\kappa} < \frac{(\frac{1}{\beta_1^x} - L_{xy}^2 - L_{\theta}-1)}{\bar{\eta}}$ and $s\in [\frac{2\kappa+2}{1-\beta-2\beta\kappa}, \frac{(\frac{1}{\beta_1^x} - L_{xy}^2 - L_{\theta}-1)}{\bar{\eta}}]$. Then we obtain \eqref{eq:psd} (Condition 1) with $c_1=\min\{t_1, t_2\}/2$.
 \hfill $\square$
\end{proof}

We then consider perturbed subgradient lower bound property (Condition 2 in Definition \ref{d:pglds}).

\begin{lemma}[Perturbed subgradient lower bound]\label{l:condition2}
Suppose that Assumption \ref{a:second-stage} holds. Let $\{(x_1^k, y_1^k)\}_{k\in\mathbb{N}}$ be the sequence generated by Algorithm 1, and $\{\nu_k\}_{k\in\mathbb{N}}$ be defined in \eqref{eq:nuk}. Then, there exists $c_2>0$ such that for every $k\in\mathbb{N}$, one can find $\zeta^{k+1}\in\partial\Psi_N(x_1^{k+1})$, which satisfies \eqref{eq:psl} (Condition 2 in Definition \ref{d:pglds}).
\end{lemma}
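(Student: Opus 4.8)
The plan is to build $\zeta^{k+1}$ directly from the estimate already produced in Lemma \ref{l:x-step}(ii), and then to absorb the two residual terms there into a single multiple of $\nu_k$. Concretely, Lemma \ref{l:x-step}(ii) guarantees an $M>0$ and a $\zeta^{k+1}\in\partial\Psi_N(x_1^{k+1})$ with
\[
\|\zeta^{k+1}\|\leq M\left(\|x_1^{k+1}-x_1^k\| + \|y_1^k-\tilde{y}_1^N(x_1^k)\|\right) + \|\delta_x^k\|,
\]
so it remains only to dominate the last two summands by $C\,\nu_k$ for a suitable constant $C$ independent of $k$. I would take exactly this $\zeta^{k+1}$ as the subgradient claimed in the lemma.

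The bound on the middle term is immediate from the definition \eqref{eq:nuk}: since $\nu_k^2 = s\|y_1^k-\tilde{y}_1^N(x_1^k)\|^2 + (\delta^{k-1})^2 \geq s\|y_1^k-\tilde{y}_1^N(x_1^k)\|^2$, we get $\|y_1^k-\tilde{y}_1^N(x_1^k)\|\leq \nu_k/\sqrt{s}$. For the error term I would use $\|\delta_x^k\|\leq\delta^k$, which holds by the standing requirement $\delta^k\geq\max\{\|\delta_x^k\|,\|\delta_y^k\|\}$. The one point requiring care is the index mismatch: $\nu_k$ is built from $\delta^{k-1}$, whereas the residual in Lemma \ref{l:x-step}(ii) is controlled by $\delta^k$. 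This is resolved by the monotonicity assumption $\{\delta^k\}\downarrow 0$, which gives $\delta^k\leq\delta^{k-1}$, and from \eqref{eq:nuk} again $(\delta^{k-1})^2\leq\nu_k^2$, hence $\|\delta_x^k\|\leq\delta^k\leq\delta^{k-1}\leq\nu_k$.

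Combining the three estimates yields
\[
\|\zeta^{k+1}\|\leq M\|x_1^{k+1}-x_1^k\| + \frac{M}{\sqrt{s}}\,\nu_k + \nu_k
\leq c_2\left(\|x_1^{k+1}-x_1^k\| + \nu_k\right),
\]
with $c_2:=\max\{M,\ M/\sqrt{s}+1\}$, which is the desired inequality \eqref{eq:psl} and establishes Condition 2 of Definition \ref{d:pglds}. I do not expect any genuine obstacle here: the substance of the argument was already carried out in Lemma \ref{l:x-step}(ii), so the only work is the bookkeeping of constants and the decreasing-sequence argument that lets the step-$k$ perturbation $\delta_x^k$ be absorbed by the $\delta^{k-1}$ appearing in $\nu_k$. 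The mild subtlety worth stating explicitly is simply that $c_2$ is uniform in $k$, which holds because $M$ and $s$ are fixed throughout the iteration.
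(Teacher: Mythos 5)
Your proof is correct and follows essentially the same route as the paper: both invoke Lemma \ref{l:x-step}(ii) and absorb the terms $\|y_1^k-\tilde{y}_1^N(x_1^k)\|$ and $\|\delta_x^k\|$ into $\nu_k$ via the definition \eqref{eq:nuk} together with $\|\delta_x^k\|\leq\delta^k\leq\delta^{k-1}$. The only difference is bookkeeping of the constant --- the paper merges the two residual terms under one square root using $(a+b)^2\leq 2a^2+2b^2$, obtaining $c_2=\max\{M,\sqrt{\max\{2M^2/s,2\}}\}$, whereas you bound them separately to get $c_2=\max\{M,\,M/\sqrt{s}+1\}$ --- which is immaterial.
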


\begin{proof}
By Lemma \ref{l:x-step} (ii), there exists $M>0$ such that for  every $k>0$ and $c_2=\max\left\{M, \sqrt{\max\left\{2M^2/s, 2\right\}}\right\}$, one can find $\zeta^{k+1}\in\partial\Psi_N(x_1^{k+1})$ such that
\begin{equation*}
\begin{array}{lll}
\|\zeta^{k+1}\|&\leq& M(\|x^{k+1} - x^{k}\| + \|y^{k} - \tilde{y}_1^N(x^k)\|)+\|\delta_x^k\|\\
&\leq&M(\|x^{k+1} - x^{k}\|) +\sqrt{2M^2\|y^{k} - \tilde{y}_1^N(x^k)\|^2+2(\delta^{k-1})^2}\\
&\leq&M(\|x^{k+1} - x^{k}\|) + \sqrt{\max\left\{\frac{2M^2}{s}, 2\right\}}\nu_k\\
&\leq&c_2(\|x^{k+1} - x^{k}\| +\nu_k),
\end{array}
\end{equation*}
where the second inequality follows from $(a+b)^2\leq (1+\gamma)a^2+(1+\gamma^{-1})b^2$ and $\|\delta_x^k\|\leq\delta^k\leq\delta^{k-1}$ in \eqref{eq:l4-01}.
 \hfill $\square$
\end{proof}

Then we consider Condition 3 in Definition \ref{d:pglds}.

\begin{lemma}\label{l:condition3}
Suppose that Assumption \ref{a:second-stage} holds. Let $\{(x_1^k, y_1^k)\}_{k\in\mathbb{N}}$ be the sequence generated by Algorithm 1, and $\{\nu_k\}_{k\in\mathbb{N}}$ be as defined in \eqref{eq:nuk}, such that $\{(x_1^k, \nu_k)\}_{k\in\mathbb{N}}$ satisfies Condition 1 in Definition \ref{d:pglds}. Let $\{x_1^k\}_{k\in\mathcal{K}\subset\mathbb{N}}$ be a subsequence that converges to a point $\bar{x}_1$. Then
\[
\limsup_{k\to\infty, k\in\mathcal{K}} \; \Psi_N(x_1^k)\leq \Psi_N(\bar{x}_1).
\]
\end{lemma}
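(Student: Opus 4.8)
The plan is to split $\Psi_N = \bar{f} + \theta_N$ and treat the two summands separately. By Lemma~\ref{l:properties}(iii) the function $\theta_N$ is $C^1$ (in particular continuous), so along the subsequence $\theta_N(x_1^k)\to\theta_N(\bar{x}_1)$ and this term causes no trouble. The entire difficulty is concentrated in $\bar{f}=f+\mathbf{1}_{X_1}$, which is only proper and lsc: lower semicontinuity gives $\liminf_{k\in\mathcal K}\bar{f}(x_1^k)\ge\bar{f}(\bar{x}_1)$, the \emph{wrong} direction. Hence the real task is to establish the reverse-type bound $\limsup_{k\in\mathcal K}\bar{f}(x_1^k)\le\bar{f}(\bar{x}_1)$, and I would obtain it from the optimality (proximal) characterization of the $x$-step, testing it against the limit point $\bar{x}_1$. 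This is the standard device used in KL-type convergence analyses to recover upper semicontinuity of the objective along the generated iterates.

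First I would record the consequences of Condition~1 (assumed in the hypothesis). Since $\Psi_N(x_1^k)+\tfrac12\nu_k^2$ is nonincreasing by \eqref{eq:psd} and $\Psi_N$ is bounded below on its domain (which is contained in the compact set $X_1$, with $\theta_N$ continuous and $f$ proper lsc), the sequence $\{\Psi_N(x_1^k)+\tfrac12\nu_k^2\}$ converges. Telescoping \eqref{eq:psd} then yields $\sum_k(\|x_1^{k+1}-x_1^k\|+\nu_k^2)<\infty$, so $\|x_1^{k+1}-x_1^k\|\to0$ and $\nu_k\to0$. Consequently, for $k\in\mathcal K$ we also have $x_1^{k-1}\to\bar{x}_1$, because $\|x_1^{k-1}-\bar{x}_1\|\le\|x_1^{k-1}-x_1^k\|+\|x_1^k-\bar{x}_1\|\to0$. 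I would also note that all iterates stay in the compact sets $X_1$ and $Y_1$ (the proximal steps force $x_1^{k}\in X_1$, $y_1^{k}\in Y_1$ via the indicator terms), so $\nabla_{x_1}\psi_1(x_1^{k-1},y_1^{k-1})$ is bounded by continuity, and $\tilde v_x^{k-1}$ is bounded since $v_x^{k-1}=\nabla_{x_1}\Phi_N(x_1^{k-1},y_1^{k-1})$ is Lipschitz on $X_1\times Y_1$ (Lemma~\ref{l:2stage-bound-continuity}(iii)) while $\|\tilde v_x^{k-1}-v_x^{k-1}\|\le\delta^{k-1}\to0$.

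The core step is to use the $x$-update (step 8), which produces $x_1^{k}$ as the minimizer of $\bar{f}(x_1)+\langle\nabla_{x_1}\psi_1(x_1^{k-1},y_1^{k-1})+\tilde v_x^{k-1},\,x_1-x_1^{k-1}\rangle+\tfrac{1}{2\beta_1^x}\|x_1-x_1^{k-1}\|^2$. Testing this minimization against the competitor $\bar{x}_1$ gives
\[
\bar{f}(x_1^{k})\le \bar{f}(\bar{x}_1)+\big\langle \nabla_{x_1}\psi_1(x_1^{k-1},y_1^{k-1})+\tilde v_x^{k-1},\,\bar{x}_1-x_1^{k}\big\rangle+\frac{1}{2\beta_1^x}\Big(\|\bar{x}_1-x_1^{k-1}\|^2-\|x_1^{k}-x_1^{k-1}\|^2\Big).
\]
Taking $\limsup$ over $k\in\mathcal K$, the inner-product term vanishes (a bounded factor times $\bar{x}_1-x_1^{k}\to0$, by Cauchy--Schwarz), and the quadratic bracket vanishes since both $\bar{x}_1-x_1^{k-1}\to0$ and $x_1^{k}-x_1^{k-1}\to0$. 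This yields $\limsup_{k\in\mathcal K}\bar{f}(x_1^k)\le\bar{f}(\bar{x}_1)$. Combining with $\theta_N(x_1^k)\to\theta_N(\bar{x}_1)$ gives $\limsup_{k\in\mathcal K}\Psi_N(x_1^k)\le\bar{f}(\bar{x}_1)+\theta_N(\bar{x}_1)=\Psi_N(\bar{x}_1)$, as claimed. The only genuinely delicate point is the one already flagged: the nonsmooth, merely lsc part $\bar{f}$ must be controlled by the proximal optimality inequality rather than by any continuity argument; everything else reduces to the vanishing of the step sizes $\|x_1^{k}-x_1^{k-1}\|$ and the perturbation magnitude $\delta^{k-1}$ supplied by Condition~1.
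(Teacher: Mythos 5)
Your proposal is correct and takes essentially the same route as the paper: the paper's own proof simply defers to \cite[Lemma 13]{cohen2025alternating}, noting that the only change is the extra noise term $\delta_x^k$ in the $x$-step and that this does not alter the argument. What you have written out --- testing the proximal $x$-update against the competitor $\bar{x}_1$ to obtain $\limsup_{k\in\mathcal{K}}\bar{f}(x_1^k)\leq \bar{f}(\bar{x}_1)$, handling $\theta_N$ by continuity, and using Condition~1 to make $\|x_1^k-x_1^{k-1}\|$ and the perturbation $\delta^{k-1}$ vanish --- is precisely that cited proof made explicit, including the perturbation bookkeeping the paper says is the only modification.
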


\begin{proof}
The proof of the lemma is similar to that of \cite[Lemma 13]{cohen2025alternating}; the only difference is that we need to add a noise term $\delta_x^k$ after $\nabla_{x_1}\psi_1(x_1^k, y_1^k) + v_x^k$ (that is $\nabla_uc(u^l, w^l)$ in the proof of \cite[Lemma 13]{cohen2025alternating}). Note that this change does not alter the proof process, we omit the details of the proof here.
 \hfill $\square$
\end{proof}

\subsection{Subsequence and global convergence}
In the above section, we have verified three conditions in Definition \ref{d:pglds}, which implies the sequence $\{x_1^k, \nu_k\}$ is a  perturbed gradient-like descent sequence. Now we investigate the subsequence and global convergence of IPPGDA algorithm. We start from the KL property of our problem.

\begin{assumption}\label{a:semialgebraic}
    $f$, $\psi_1$, $F_2(\cdot, \cdot, \xi^i) (i=1, \cdots, N),$ and $g$ are semialgebraic.
\end{assumption}

\begin{lemma}\label{l:semia}
    Suppose that $\phi:\mathbb{R}^n\times \mathbb{R}^m\to\mathbb{R}$ is semialgebraic of two variables and a semialgebraic set $Q$ in $y$-space, then the function $\varphi(\cdot):=\inf\{\phi(\cdot, y): y\in Q\}$ is semialgebraic.
\end{lemma}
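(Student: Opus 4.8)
The plan is to reduce the statement to the Tarski--Seidenberg theorem, which guarantees that the class of semialgebraic sets is preserved under first-order definability: any set carved out by a first-order formula whose atomic predicates are polynomial (in)equalities (equivalently, membership in previously constructed semialgebraic sets) is again semialgebraic, and in particular projections of semialgebraic sets are semialgebraic. I would cite a standard source (e.g.\ Bochnak--Coste--Roy) for this tool. Since a function is semialgebraic precisely when its graph is a semialgebraic set, it suffices to show that $\gph\,\varphi=\{(x,t)\in\mathbb{R}^n\times\mathbb{R}:\varphi(x)=t\}$ is semialgebraic.

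First I would assemble the basic semialgebraic building blocks. Because $\phi$ is semialgebraic, its graph $\gph\,\phi=\{(x,y,s):s=\phi(x,y)\}\subset\mathbb{R}^{n+m+1}$ is semialgebraic, and $Q\subset\mathbb{R}^m$ is semialgebraic by hypothesis. From these, the relation $\{(x,y,t):y\in Q,\ \phi(x,y)\ge t\}$ is semialgebraic, being obtained from $\gph\,\phi$ and $\mathbb{R}^n\times Q\times\mathbb{R}$ by a Boolean combination together with one projection eliminating the value variable $s$; the analogous strict relation $\{(x,y,t,\epsilon):y\in Q,\ \phi(x,y)<t+\epsilon\}$ is semialgebraic for the same reason. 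This establishes that the atoms appearing below are legitimate semialgebraic predicates.

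The key step is to express $\gph\,\varphi$ through first-order formulas that correctly encode the infimum even when it is not attained. I would write $\gph\,\varphi=\Gamma_{\ge}\cap\Gamma_{\le}$, where
\[
\Gamma_{\ge}:=\{(x,t):\ \forall y\,(y\in Q\Rightarrow \phi(x,y)\ge t)\}
\]
records $t\le\inf_{y\in Q}\phi(x,y)=\varphi(x)$, and
\[
\Gamma_{\le}:=\{(x,t):\ \forall\epsilon>0\ \exists y\,(y\in Q\wedge \phi(x,y)<t+\epsilon)\}
\]
records $\varphi(x)\le t$. One then checks that $\Gamma_{\ge}\cap\Gamma_{\le}$ is exactly the locus $\varphi(x)=t$, and that it automatically excludes the points where $\varphi(x)=\pm\infty$ (empty or unbounded-below slices), which is the desired behavior for a real-valued function. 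Since both $\Gamma_{\ge}$ and $\Gamma_{\le}$ are defined by first-order formulas built from the semialgebraic atoms above, Tarski--Seidenberg yields that each is semialgebraic, and hence so is their intersection, which would complete the argument.

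The main obstacle---really the only subtle point---is the treatment of the infimum rather than any hard computation: the naive description $\exists y\in Q:\phi(x,y)=t$ fails when the minimum is not achieved, so I must use the $\forall\epsilon>0\ \exists y$ formulation in $\Gamma_{\le}$. In the applications of this lemma (with $\phi$ continuous and the effective second-stage feasible slices bounded) the infimum is in fact attained and finite, so one could alternatively invoke attainment and simplify $\Gamma_{\le}$ to a single projection, but the argument above needs no such assumption.
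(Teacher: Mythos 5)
Your proof is correct, and it rests on the same external pillar as the paper's---the Tarski--Seidenberg theorem---but it takes a genuinely different route. The paper (summarizing Ioffe's book, p.~395) works with epigraphs: it notes that $\inmat{epi}\,\phi$ restricted to $y\in Q$ is semialgebraic, projects it onto the $(x,\alpha)$-space, asserts that $\inmat{epi}\,\varphi$ is the \emph{closure} of this projection, and concludes from Tarski--Seidenberg (one projection) together with stability of semialgebraic sets under closure. You instead describe $\gph\,\varphi$ directly as $\Gamma_{\ge}\cap\Gamma_{\le}$ via first-order formulas and invoke Tarski--Seidenberg in its quantifier-elimination form, using both a universal and an alternating $\forall\epsilon>0\,\exists y$ quantifier block rather than a single projection. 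What your route buys is precision at exactly the point you flag: the projection of $\inmat{epi}\,\phi$ is $\{(x,\alpha):\exists y\in Q,\ \phi(x,y)\le\alpha\}$, which sits between the strict and the full epigraph of $\varphi$, and its closure coincides with $\inmat{epi}\,\varphi$ only under lower semicontinuity of $\varphi$ (otherwise closing up in the $x$-direction can add points below the epigraph at discontinuities); your encoding of the infimum is exact whether or not it is attained, and it also automatically excises the $x$ where the slice is empty or $\varphi(x)=-\infty$, so $\varphi$ is treated as real-valued only where it genuinely is. What the paper's route buys is brevity---one projection, one closure---and in the intended application (continuous $F_2$, bounded effective second-stage slices per Lemma \ref{l:2stage-bound-continuity}(iv)) the infimum is attained and $\varphi$ is continuous, so the closure identification is harmless there. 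A last cosmetic point: you prove the \emph{graph} is semialgebraic while the paper proves the \emph{epigraph} is; for a real-valued function these are equivalent characterizations (each set is first-order definable from the other), so the two arguments establish the same lemma.
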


\begin{proof}
    The proof is summarized from \cite[Page 395]{ioffe2017variational}.
    Since
    \[
    \inmat{epi}~ \phi=\{(x, y, \alpha): \alpha\geq \phi(x, y), x\in \mathbb{R}^n, y\in Q\}
    \]
     is semialgbraic, $\inmat{epi}~ \varphi=\{(x, \alpha): \alpha\geq \varphi(x)\}$ and $\phi(x,y)\geq \varphi(x)$ for all $y\in Q$, we have $\inmat{epi}~ \varphi$ is the closure of the projection of $\inmat{epi}~ \phi(x, y)$ onto the $(x, \alpha)$-space.  Then by Tarski–Seidenberg theorem, $\inmat{epi}~ \varphi$ is a semialgebraic set, and then $\varphi(\cdot)$ is a semialgebraic function.
 \hfill $\square$
\end{proof}

\begin{proposition}\label{p:semialgebraic}
    Suppose that Assumptions  \ref{a:second-stage} and \ref{a:semialgebraic} hold. Then $\Psi_N$ is semialgebraic.
\end{proposition}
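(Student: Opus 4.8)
The plan is to establish semialgebraicity of $\Psi_N$ by peeling the composite minimax structure of $\psi_2$ from the inside out, using three standard closure properties of the class of (extended-real-valued) semialgebraic functions: it is closed under finite sums, under negation, and---by Lemma~\ref{l:semia}---under partial minimization over a semialgebraic feasible set. Assumption~\ref{a:semialgebraic} supplies the base case: $f$, $\psi_1$, $g$, and each $F_2(\cdot,\cdot,\xi^i)$ are semialgebraic. I also use that the constraint sets $X_2(x_1,\xi^i)$ and $Y_2(y_1,\xi^i)$ are polyhedral in the joint variables (their defining inequalities are linear once $\xi=\xi^i$ is fixed), hence semialgebraic, and that $X_1$, $Y_1$ are semialgebraic sets so that the indicators $\bm{1}_{X_1}$, $\bm{1}_{Y_1}$ appearing in $\bar f,\bar g$ are semialgebraic.

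First I would fix a sample $\xi^i$ and treat everything as a function of the first-stage variables only. For the inner maximization \eqref{eq:f21}, $F_2(\cdot,\cdot,\xi^i)$ is semialgebraic and the set $\{(x_2,y_1,y_2): A(\xi^i)y_1+B(\xi^i)y_2\le c(\xi^i)\}$ is semialgebraic, so its indicator is a semialgebraic extended-valued function; adding this indicator to $-F_2(\cdot,\cdot,\xi^i)$ and minimizing over $y_2\in\mathbb{R}^{m_2}$ via Lemma~\ref{l:semia} shows $-f_{21}(\cdot,\cdot,\xi^i)$, and hence $f_{21}(\cdot,\cdot,\xi^i)$, is semialgebraic. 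Repeating the argument for the outer minimization in \eqref{eq:st-minimax}---absorbing the polyhedral constraint $\{(x_1,x_2,y_1):T(\xi^i)x_1+W(\xi^i)x_2\le h(\xi^i)\}$ into an indicator added to the (now semialgebraic) $f_{21}(\cdot,\cdot,\xi^i)$ and minimizing over $x_2\in\mathbb{R}^{n_2}$---yields that $\psi_2(\cdot,\cdot,\xi^i)$ is semialgebraic. By Lemma~\ref{l:2stage-bound-continuity} these value functions are finite-valued (indeed continuous) on the relevant domain, so no spurious $\pm\infty$ values survive into later steps.

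With the per-sample value functions in hand, $\Phi_{N}=\frac{1}{N}\sum_{i=1}^N\psi_2(\cdot,\cdot,\xi^i)$ is a finite sum of semialgebraic functions and is therefore semialgebraic. Consequently the integrand $\psi_1+\Phi_{N}-\bar g$ in the definition of $\theta_N$ is semialgebraic (a finite sum/difference, with $\bar g$ an extended-valued semialgebraic function enforcing $y_1\in Y_1$). Writing the supremum as $\theta_N(x_1)=-\inf_{y_1\in\mathbb{R}^{m_1}}\bigl[-\psi_1(x_1,y_1)-\Phi_{N}(x_1,y_1)+\bar g(y_1)\bigr]$ and applying Lemma~\ref{l:semia} once more gives that $\theta_N$ is semialgebraic. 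Finally $\Psi_N=\bar f+\theta_N=f+\bm{1}_{X_1}+\theta_N$ is again a finite sum of semialgebraic functions, which completes the proof.

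The main obstacle is that Lemma~\ref{l:semia} is phrased for a \emph{fixed} semialgebraic feasible set $Q$ in the optimization variable, whereas $X_2(x_1,\xi^i)$ and $Y_2(y_1,\xi^i)$ depend on the parameter that the outer layer optimizes over. The resolution, used repeatedly above, is to push these linear constraints into an extended-valued indicator on the joint variable space---legitimate precisely because the constraint sets are polyhedral, hence semialgebraic---and then minimize over the unconstrained space $Q=\mathbb{R}^{n_2}$ (respectively $\mathbb{R}^{m_2}$); the epigraphical Tarski--Seidenberg argument underlying Lemma~\ref{l:semia} applies verbatim to such extended-real-valued semialgebraic functions. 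The only hypothesis needed beyond Assumption~\ref{a:semialgebraic} is the (mild, typically satisfied) semialgebraicity of the first-stage sets $X_1$ and $Y_1$.
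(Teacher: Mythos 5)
Your proposal is correct and follows essentially the same route as the paper's proof: both absorb the parameter-dependent linear constraints $X_2(x_1,\xi^i)$ and $Y_2(y_1,\xi^i)$ into semialgebraic indicator functions on the joint variable space, apply Lemma~\ref{l:semia} to the inner maximization and outer minimization to get semialgebraicity of $f_{21}(\cdot,\cdot,\xi^i)$ and $\psi_2(\cdot,\cdot,\xi^i)$, and then use closure under finite sums and one more application of Lemma~\ref{l:semia} for $\theta_N$ and $\Psi_N$. Your write-up is in fact slightly more careful than the paper's on two points it leaves implicit---handling $\max$ via negation since Lemma~\ref{l:semia} is stated for $\inf$, and flagging that the indicator trick is what reconciles the parameter-dependent feasible sets with the fixed-$Q$ phrasing of the lemma.
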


\begin{proof}
Note that $\bm{1}_{Y_2(y_1, \xi)}(y_2)$ and $\bm{1}_{X_2(x_1, \xi)}(x_2)$ are semialgebraic functions w.r.t. $(y_1, y_2)$ and $(x_1, x_2)$ respectively. Moreover, by \eqref{eq:f21} and \eqref{eq:st-minimax},
for any $\xi^i$, $i=1, \cdots, N$,
\[
 f_{21}(x_2,  y_1, \xi^i)=\max_{y_2}F_2(x_2, y_2, \xi^i) - \bm{1}_{Y_2(y_1, \xi^i)}(y_2)
\]
and
 \[
 \psi_2(x_1, y_1, \xi^i)=\min_{x_2}  f_{21}(x_2,  y_1, \xi^i) + \bm{1}_{X_2(x_1, \xi^i)}(x_2).
 \]
Then by  Assumption~\ref{a:semialgebraic} and Lemma~\ref{l:semia}, $ f_{21}(x_2, y_1, \xi^i)$ is semialgebraic, and then $\psi_2(x_1, y_1, \xi^i)$ is semialgebraic for every $\xi^i$, $i=1, \cdots, N$.

Since $\psi_1$, $\psi_2(\cdot, \cdot, \xi^i)$, $i=1, \cdots, N$, and $g$ are all semialgebraic, by Lemma \ref{l:semia}, $\theta_N$ in \eqref{eq:onemin} is semialgebraic, and then $\Psi_N$ is semialgebraic.
 \hfill $\square$
\end{proof}

Unlike the direct assumption of semialgebraicity for related functions in \cite{cohen2025alternating}, we must rigorously establish the semialgebraic property of function $\Psi_N$ in our framework.

Finally, we arrive the subsequence and global convergence here.

\begin{theorem}\label{t:mainconvergence}
Suppose that Assumption \ref{a:second-stage} holds. Let $\{(x_1^k, y_1^k)\}_{k\in\mathbb{N}}$ be the sequence generated by Algorithm 1. Then the following statements hold.
\begin{itemize}
\item[(i)] There exists a nonempty and bounded  set of cluster points of the sequence $\{x_1^k\}_{k\in\mathbb{N}}$, such that there exists $\Omega\subset \inmat{crit}~\Psi_N$, $\lim_{k\to\infty}\dist(x_1^k, \Omega)=0$, and $\Psi_N$ is finite and constant on $\Omega$. Moreover, let $\{x_1^k\}_{k\in\mathcal{K}\subset\mathbb{N}}$ be a subsequence converging to point $\bar{x}_1^N\in X_1$. Then the subsequence $\{y_1^k\}_{k\in\mathcal{K}\subset\mathbb{N}}$ converges to $\tilde{y}_1^N(\bar{x}_1^N)$;

\item[(ii)] In addition, suppose  Assumption \ref{a:semialgebraic} holds. Then $\sum_{k=1}^\infty\|x_1^{k+1}-x_1^k\|<\infty$ and $\{x_1^k\}_{k\in\mathbb{N}}$  converges to a critical point $\bar{x}_1^N\in\inmat{crit}~\Psi_N$. Moreover,  $\{y_1^k\}_{k\in\mathbb{N}}$ converges to $\tilde{y}_1^N(\bar{x}_1^N)$.
\end{itemize}
\end{theorem}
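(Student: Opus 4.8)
The plan is to treat this as a direct application of the abstract convergence machinery for perturbed gradient-like descent sequences, since the three defining conditions of Definition \ref{d:pglds} have already been verified in Lemmas \ref{l:condition1}, \ref{l:condition2} and \ref{l:condition3}. Thus, under the stated parameter choices, $\{(x_1^k, \nu_k)\}_{k\in\mathbb{N}}$ with $\nu_k$ as in \eqref{eq:nuk} is a perturbed gradient-like descent sequence, and I would first record that $\Psi_N$ is proper, lsc and bounded below: the indicator in $\bar{f}$ confines $\dom\Psi_N$ to the compact set $X_1$, on which $f$ attains its minimum and $\theta_N$ is continuous by Lemma \ref{l:properties}(iii).

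For part (i), I would start from Condition 1 (inequality \eqref{eq:psd}): the merit sequence $\{\Psi_N(x_1^k)+\tfrac12\nu_k^2\}$ is nonincreasing and bounded below, hence convergent, and telescoping yields $\sum_k(\|x_1^{k+1}-x_1^k\| + \nu_k^2) < \infty$. In particular $\|x_1^{k+1}-x_1^k\|\to 0$ and $\nu_k\to 0$, the latter forcing $\|y_1^k - \tilde{y}_1^N(x_1^k)\|\to0$ and $\delta^{k-1}\to0$. Since $\{x_1^k\}\subset X_1$ is bounded, the cluster point set $\Omega$ is nonempty and bounded. To show $\Omega\subset\inmat{crit}~\Psi_N$, I would invoke Condition 2 (inequality \eqref{eq:psl}): along any subsequence $x_1^k\to\bar{x}_1$, the bound $\|\zeta^{k+1}\|\le c_2(\|x_1^k-x_1^{k+1}\|+\nu_k)\to 0$, together with Condition 3 (which supplies $\Psi_N(x_1^k)\to\Psi_N(\bar{x}_1)$) and the closedness of $\gph~\partial\Psi_N$, gives $0\in\partial\Psi_N(\bar{x}_1)$. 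The constancy of $\Psi_N$ on $\Omega$ follows since $\nu_k\to0$ implies $\Psi_N(x_1^k)$ converges to the common limit of the merit sequence. Finally, the subsequential convergence $y_1^k\to\tilde{y}_1^N(\bar{x}_1^N)$ follows from $\|y_1^k-\tilde{y}_1^N(x_1^k)\|\to0$ and the Lipschitz continuity of $\tilde{y}_1^N$ established in Lemma \ref{l:properties}(i).

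For part (ii), I would add Assumption \ref{a:semialgebraic}, under which Proposition \ref{p:semialgebraic} shows $\Psi_N$ is semialgebraic and hence satisfies the Kurdyka--{\L}ojasiewicz property; the augmented function $(x_1,\nu)\mapsto\Psi_N(x_1)+\tfrac12\nu^2$ inherits this property. The standard KL argument, adapted to the perturbed setting as in the global convergence theorem of \cite{cohen2025alternating}, then converts the two perturbed conditions into the finite-length estimate $\sum_{k}\|x_1^{k+1}-x_1^k\|<\infty$. Consequently $\{x_1^k\}$ is Cauchy and converges to a single $\bar{x}_1^N\in\inmat{crit}~\Psi_N$, and $y_1^k\to\tilde{y}_1^N(\bar{x}_1^N)$ by the continuity argument of part (i).

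The hard part will be ensuring the perturbation term $\nu_k$, which bundles the maximization gap $\|y_1^k-\tilde{y}_1^N(x_1^k)\|$ with the inexactness $\delta^{k-1}$, does not destroy the summability underlying the KL finite-length argument. The conditions imposed on $\{\delta^k\}$ in \eqref{eq:l4-01} and the precise coupling constant $s$ in \eqref{eq:nuk} are exactly what guarantee that the perturbed sufficient-decrease and subgradient-bound inequalities hold with matching constants, so that the KL telescoping absorbs the errors; checking that the abstract perturbed framework of \cite{cohen2025alternating} applies verbatim with these constants is where the real care is needed, even though no genuinely new estimate is required beyond Lemmas \ref{l:condition1}--\ref{l:condition3}.
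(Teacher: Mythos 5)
Your proposal is correct and follows essentially the same route as the paper: both treat the theorem as a direct application of the perturbed gradient-like descent framework, with Lemmas \ref{l:condition1}--\ref{l:condition3} supplying the three conditions of Definition \ref{d:pglds} and Proposition \ref{p:semialgebraic} supplying the KL property via semialgebraicity. The only difference is presentational — the paper simply cites \cite[Lemmas 3 and 4, Theorem 1]{cohen2025alternating} for the telescoping, subsequence, and KL finite-length arguments that you re-derive inline, and it closes with the same $y$-convergence argument via $\nu_k\to0$ and the Lipschitz continuity of $\tilde{y}_1^N$ from Lemma \ref{l:properties}(i).
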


\begin{proof}
Since $X_1$ is compact, by Lemmas~\ref{l:condition1}-\ref{l:condition3},  the sequence $\{(x_1^k, \nu_k)\}_{k\in\mathbb{N}}$ is a gradient-like descent sequence and  the subsequence convergence of $\{x_1^k\}_{k\in\mathbb{N}}$ comes from \cite[Lemma 4]{cohen2025alternating}. Moreover, under Assumption \ref{a:semialgebraic}, by Proposition \ref{p:semialgebraic}, $\Psi_N$ is semialgebraic. Then we can apply \cite[Theorem 1]{cohen2025alternating} to obtain the global convergence of $\{x_1^k\}_{k\in\mathbb{N}}$.

Moreover, since the sequence $\{(x_1^k, \nu_k)\}_{k\in\mathbb{N}}$ satisfies Condition 1 in Definition \ref{d:pglds} and $X_1$ is compact,  by  \cite[Lemma 3]{cohen2025alternating}, we have $\nu_k\to0$ which implies $s\|y_1^k-\tilde{y}_1^N(x_1^k)\|\to0$. Note from Lemma \ref{l:properties} that $\tilde{y}_1^N(\cdot)$ is continuous, we have in part (i),
\[
\lim_{k\to\infty, k\in\mathcal{K}}\|y_1^k-\tilde{y}_1^N(\bar{x}_1^N)\| \leq \lim_{k\to\infty, k\in\mathcal{K}}(\|y_1^k-\tilde{y}_1^N(x^k_1)\|+\|\tilde{y}_1^N(x^k_1)-\tilde{y}_1^N(\bar{x}_1^N)\|)=0,
\]
the subsequence convergence of $\{y_1^k\}$; and in part (ii),
\[
\lim_{k\to\infty}\|y_1^k-\tilde{y}_1^N(\bar{x}_1^N)\| \leq \lim_{k\to\infty}(\|y_1^k-\tilde{y}_1^N(x^k_1)\|+\|\tilde{y}_1^N(x^k_1)-\tilde{y}_1^N(\bar{x}_1^N)\|)=0,
\]
the global convergence of $\{y_1^k\}$.
 \hfill $\square$
\end{proof}

\subsection{A semi-smooth Newton method for problem \eqref{eq:nonsmooth-equation}}


By Remark \ref{r:delta}, in step 3 of Algorithm 1, for $i=1, \cdots, N$ and every iteration $k$, our purpose is to find a $\pi$-component of an inexact solution of \eqref{eq:nonsmooth-equation}, denoted by $(\tilde{\pi}_{x_2^k}^{i}, \tilde{\pi}_{y_2^k}^{i})$, such that $\|\tilde{\pi}_{x_2^k}^{i} - \pi_{x_2^k} \|\leq \frac{\delta^k}{\bar{a}}$ and $\|\tilde{\pi}_{y_2^k}^{i} - \pi_{y_2^k}\|\leq \frac{\delta^k}{\bar{t}}$. In this section,  we apply a semi-smooth Newton method to find a $\frac{\delta_k}{\max\{\bar{a}, \bar{t}\}}$-solution of \eqref{eq:nonsmooth-equation} for each  $i=1, \cdots, N$ and $k\geq0$.


For  a fixed $(x_2,y_2,\xi)$, let
\[
M_1\in\begin{pmatrix}
\partial^2_{x_2x_2}F_2(x_2, y_2, \xi) & \partial^2_{x_2y_2}F_2(x_2, y_2, \xi) \\
-\partial^2_{y_2x_2}F_2(x_2, y_2, \xi) &  -\partial^2_{y_2y_2}F_2(x_2, y_2, \xi)
\end{pmatrix},\,\,
M_2:=\begin{pmatrix}
W(\xi)&0\\
 0 & B(\xi)
\end{pmatrix},
\]
$U\in \mathbb{R}^{(l_2+s_2)\times (l_2+s_2)}$ be a diagonal matrix with $u_{ii} \in [0,1]$, $i=1, \cdots, l_2+s_2$. Note that $M_2$ is of full row rank.
Then all matrices belonging to $\partial_\mu H(\cdot,\xi)$ at $(\mu,\xi)$ are of the form as follows:
\[
J H(\mu,\xi)=\begin{pmatrix}
M_1 & M_2^\top\\
(U-I)M_2& U
\end{pmatrix}.
\]
Note that $M_1$ and $M_2$ depend on $(x_2, y_2, \xi)$, but are abbreviated as $M_1$ and $M_2$ without explicit variables for notational simplicity.

\begin{proposition}\label{p:lowerbound}
Suppose that Assumptions \ref{a:second-stage}-\ref{a:xicompact-regular} hold. 
There exists $\underline{\lambda}>0$ such that for any given $(x_1, y_1, \xi)\in X_1\times Y_1\times \Xi$, every element $J H(\mu,\xi)\in \partial_\mu H(\mu,\xi)$ is nonsingular, and  $\|J H(\mu,\xi)^{-1}\|\leq \frac{1}{\sqrt{\underline{\lambda}}}$.
\end{proposition}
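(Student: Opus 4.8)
The plan is to prove the stronger quantitative estimate $\|JH(\mu,\xi)z\|\ge\sqrt{\underline\lambda}\,\|z\|$ for every $z=(v,w)\in\mathbb{R}^{n_2+m_2}\times\mathbb{R}^{l_2+s_2}$, from which both nonsingularity and the bound $\|JH(\mu,\xi)^{-1}\|\le 1/\sqrt{\underline\lambda}$ follow at once, since a matrix whose smallest singular value is at least $\sqrt{\underline\lambda}$ is invertible with inverse norm at most $1/\sqrt{\underline\lambda}$. The constant $\underline\lambda$ will be assembled from four uniform quantities: the strong convexity--concavity modulus $\sigma$; an upper bound $L_M$ on $\|M_1\|$; and an upper bound $L_{M_2}$ together with a positive lower bound $\rho$ on the singular values of $M_2$. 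The first two come from Assumption \ref{a:second-stage}(i): since $F_2(\cdot,\cdot,\xi)$ is $\sigma$-strongly convex--strongly concave with Lipschitz gradients, every element of the generalized Hessian $\partial^2F_2$ has norm at most the gradient-Lipschitz modulus, giving $\|M_1\|\le L_M$ uniformly in $(x_2,y_2,\xi)$. The last two come from Assumptions \ref{a:second-stage}(iii) and \ref{a:xicompact-regular}: $W(\cdot),B(\cdot)$ are continuous and of full row rank on the compact set $\Xi$, so $\xi\mapsto\sigma_{\min}(W(\xi))$ and $\xi\mapsto\sigma_{\min}(B(\xi))$ are continuous and strictly positive, hence bounded below by some $\sqrt\rho>0$, and $\|M_2\|$ is bounded above by some $L_{M_2}$.

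Two structural facts drive the estimate. First, $M_1$ is a consistent selection from the symmetric set $\partial^2F_2$, so its off-diagonal blocks are negative transposes of one another; writing $v=(v_1,v_2)$ the cross terms cancel and
\[
v^\top M_1 v=v_1^\top(\partial^2_{x_2x_2}F_2)v_1+v_2^\top(-\partial^2_{y_2y_2}F_2)v_2\ge\sigma\|v\|^2 .
\]
Second, because $M_2$ has full row rank, $\|M_2^\top w\|^2=w^\top M_2M_2^\top w\ge\rho\|w\|^2$. For nonsingularity alone, suppose $JH z=0$, i.e. $M_1v+M_2^\top w=0$ and $(U-I)M_2v+Uw=0$. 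A componentwise inspection of the second relation, using $u_{jj}\in[0,1]$, shows $(M_2v)_jw_j\ge0$ for every $j$, hence $(M_2v)^\top w\ge0$; pairing the first relation with $v$ then gives $\sigma\|v\|^2\le v^\top M_1v=-(M_2v)^\top w\le0$, forcing $v=0$, after which $M_2^\top w=0$ and full row rank give $w=0$.

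To obtain the uniform bound I quantify this argument with $p:=M_1v+M_2^\top w$ and $q:=(U-I)M_2v+Uw$ the two blocks of $JHz$. Writing $a_j:=(M_2v)_j$, $b_j:=w_j$, $d_j:=u_{jj}$, the identity $q_j=(d_j-1)a_j+d_jb_j$ yields, by a case split at $d_j=\tfrac12$, the bound $a_jb_j\ge-2|q_j|(|a_j|+|b_j|)$; summing and applying Cauchy--Schwarz gives $(M_2v)^\top w\ge-2\|q\|(L_{M_2}\|v\|+\|w\|)$. Combining with $\sigma\|v\|^2\le v^\top p-(M_2v)^\top w$ produces
\[
\sigma\|v\|^2\le\|v\|\,\|p\|+2L_{M_2}\|v\|\,\|q\|+2\|w\|\,\|q\|,
\]
while $p=M_1v+M_2^\top w$ and the full-rank lower bound give $\|w\|\le\rho^{-1/2}(\|p\|+L_M\|v\|)$. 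Substituting and solving the resulting scalar quadratic inequality bounds $\|v\|$, and then $\|w\|$, by a fixed multiple of $\|p\|+\|q\|$; squaring yields $\|z\|^2\le\tilde C(\|p\|^2+\|q\|^2)=\tilde C\|JHz\|^2$ for a constant $\tilde C$ depending only on $\sigma,L_M,L_{M_2},\rho$, so $\underline\lambda:=1/\tilde C$ works.

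The main obstacle is keeping every constant uniform in $(x_1,y_1,\xi)$ and, crucially, in the generalized-Jacobian selection $U$. The delicate point is the componentwise estimate $a_jb_j\ge-2|q_j|(|a_j|+|b_j|)$, which must hold for all $d_j\in[0,1]$ simultaneously; the case split at $d_j=\tfrac12$ is exactly what makes the constant independent of how close $d_j$ is to $0$ or $1$. The uniform positive lower bound $\rho$ on the singular values of $W(\xi),B(\xi)$ --- available only because $\Xi$ is compact and these matrices are continuous and of full row rank --- is the other indispensable ingredient, since without compactness of $\Xi$ the inverse norm could blow up as $\xi$ approaches a rank-deficient limit.
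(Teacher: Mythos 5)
Your proof is correct, but it takes a genuinely different route from the paper. The paper argues nonsingularity via the Schur complement factorization $\det(JH(\mu,\xi))=\det(M_1)\det\bigl(U-(U-I)M_2M_1^{-1}M_2^\top\bigr)$, invoking a known lemma from the smoothing-Newton literature (cited as \cite{gabriel1997smoothing,chen2006computation}) that $U-(U-I)P$ is nonsingular whenever $P=M_2M_1^{-1}M_2^\top$ is positive definite and $U$ is diagonal with entries in $[0,1]$; it then obtains the uniform constant $\underline{\lambda}$ non-constructively, as a minimum of $\lambda_{\min}\bigl(JH^\top JH\bigr)$ over a compact family of Jacobians, using continuity of eigenvalues, compactness of $\check{X}_2\times\check{Y}_2\times\Xi$, and outer semicontinuity plus local boundedness of $\partial_\mu H$. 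You instead prove the quantitative coercivity estimate $\|JH(\mu,\xi)z\|\ge\sqrt{\underline{\lambda}}\,\|z\|$ directly, replacing the cited lemma by the elementary sign analysis $(M_2v)^\top w\ge 0$ (and its perturbed version via the case split at $u_{jj}=\tfrac12$) and replacing the compactness argument by explicit constant-tracking through $\sigma$, $L_M$, $L_{M_2}$, $\rho$. What your approach buys: it is self-contained (no external nonsingularity lemma), it produces an explicit, computable $\underline{\lambda}$, it is uniform in the selection $U$ by construction --- whereas the paper's minimization is formally taken over the unbounded multiplier range $\mathbb{R}^{2l}$ and relies on the Jacobian depending on $\pi$ only through the compact family of $U$'s --- and it does not require compactness of the $(x_2,y_2)$-range at all, only uniform moduli $\sigma$, $L_M$ and the lower bound $\rho$ on $\sigma_{\min}(W(\xi))$, $\sigma_{\min}(B(\xi))$ from compactness of $\Xi$. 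What the paper's approach buys is brevity, by leveraging established results instead of chasing constants. One point you handle more carefully than the paper does: the cancellation $v^\top M_1v\ge\sigma\|v\|^2$ requires the off-diagonal blocks of $M_1$ to be negative transposes of one another, which holds for genuine elements of $\partial_\mu H$ (limits of symmetric Hessians of the $C^{1,1}$ function $F_2$) but not for arbitrary independent block selections; your explicit ``consistent selection'' remark makes this precise, while the paper's blockwise notation leaves it implicit even though its own positive-definiteness claim for $M_2M_1^{-1}M_2^\top$ rests on the same fact.
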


\begin{proof}
Since $F_2 (\cdot, \cdot, \xi)$ is $\sigma$-strongly convex-strongly concave for any fixed $\xi\in\Xi$,
we have $\det(M_1)>0$ for any $(x_2, y_2, \xi)\in \check{X}_2\times \check{Y}_2\times\Xi$. By the Schur complement, the determinant of $JH(\mu, \xi)$ is given by
\[
\det(JH(\mu,\xi))=\det(M_1)\det(U-(U-I)M_2M_1^{-1}M_2^\top).
\]
Furthermore, under Assumption \ref{a:second-stage} (iii), $M_2M_1^{-1}M_2^\top$ is positive definite. It then follows from   \cite{gabriel1997smoothing,chen2006computation} that the matrix $U-(U-I)M_2M_1^{-1} M_2^\top$ is nonsingular. Therefore, $JH(\mu,\xi)$ is nonsingular.

Moreover, under Assumptions \ref{a:second-stage} and \ref{a:xicompact-regular}, 
from the continuity of eigenvalues w.r.t. symmetric matrices, the compactness of $\check{X}_2\times \check{Y}_2$, the outer semicontinuity and local boundedness of $\partial_\mu H$, there exists $\underline{\lambda}$ such that
\[
0<\underline{\lambda}:=\min_{\substack{(x_2, y_2, \pi_{x_2}, \pi_{y_2})\in \check{X}_2\times \check{Y}_2\times\mathbb{R}^{2l}, \\ \xi\in\Xi, J H(\mu,\xi)\in\partial_\mu H(\mu,\xi)}}\lambda_{min}((J H(\mu,\xi))^\top J H(\mu,\xi))
\]
and then $\|J H(\mu,\xi)^{-1}\|\leq\frac{1}{\sqrt{\underline{\lambda}}}$ for any $\mu\in\check{X}_2\times \check{Y}_2\times\mathbb{R}^{2l}$ and $\xi\in\Xi$.
 \hfill $\square$
\end{proof}

\begin{definition}\cite{pang1990newton,qi1993convergence}
The nonlinear function $H(\cdot, \xi)$ is semi-smooth, if $H(\cdot, \xi)$ is locally Lipschitz and for all $d\in \mathbb{R}^{n_2+m_2+l_2+s_2}$ such that the following limit exists:
\[
\lim_{G\in \partial_\mu H(\mu+t\tilde{d}, \xi), \tilde{d}\to d, t\downarrow 0}G\tilde{d}.
\]
\end{definition}

\begin{assumption}\label{a:F2-semismooth}
For any $\xi\in\Xi$, $\nabla_{x_2}F_2(\cdot, \cdot, \xi)$ and $\nabla_{y_2}F_2(\cdot, \cdot, \xi)$ are semi-smooth over $\check{X}_2\times \check{Y}_2$.
\end{assumption}

\begin{proposition}\label{p:semismooth}
Under Assumption \ref{a:F2-semismooth}, for any $\xi\in\Xi$, $H(\cdot, \xi)$ is semi-smooth over $\check{X}_2\times \check{Y}_2\times \mathbb{R}^{l_2+s_2}$.
\end{proposition}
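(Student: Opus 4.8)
The plan is to establish semismoothness of the vector-valued map $H(\cdot,\xi)$ componentwise, relying on the standard calculus of semismooth functions: smooth (in particular affine) functions are semismooth, piecewise affine functions are strongly semismooth, sums of semismooth functions are semismooth, the composition of a semismooth outer function with a continuously differentiable inner map is semismooth (see e.g. \cite{qi1993convergence,pang1990newton}), and a vector-valued map is semismooth precisely when each of its component functions is. Throughout, $\xi\in\Xi$ is fixed, so $W(\xi),B(\xi),T(\xi),A(\xi),h(\xi),c(\xi)$ are constants and the variable is $\mu=(x_2,y_2,\pi_{x_2},\pi_{y_2})$, with $(x_1,y_1)$ entering only as fixed parameters.

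First I would treat the two gradient blocks. The first block $\nabla_{x_2}F_2(x_2,y_2,\xi)+W(\xi)^\top\pi_{x_2}$ is the sum of the map $(x_2,y_2)\mapsto\nabla_{x_2}F_2(x_2,y_2,\xi)$, which is semismooth over $\check{X}_2\times\check{Y}_2$ by Assumption \ref{a:F2-semismooth}, and the linear map $\pi_{x_2}\mapsto W(\xi)^\top\pi_{x_2}$, which is smooth and hence semismooth; as a sum of semismooth functions it is semismooth over $\check{X}_2\times\check{Y}_2\times\mathbb{R}^{l_2+s_2}$. The second block $-\nabla_{y_2}F_2(x_2,y_2,\xi)+B(\xi)^\top\pi_{y_2}$ is handled by the identical argument, using the semismoothness of $\nabla_{y_2}F_2(\cdot,\cdot,\xi)$.

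Next I would treat the two complementarity blocks. The componentwise minimum operator $\min(\cdot,\cdot)$ is piecewise affine and therefore strongly semismooth. Its two arguments, $\pi_{x_2}$ and $h(\xi)-T(\xi)x_1-W(\xi)x_2$, are affine in $\mu$ (recall $x_1$ is fixed), hence continuously differentiable; by the composition rule, $\min(\pi_{x_2},\,h(\xi)-T(\xi)x_1-W(\xi)x_2)$ is semismooth, and likewise $\min(\pi_{y_2},\,c(\xi)-A(\xi)y_1-B(\xi)y_2)$. Finally, invoking the componentwise characterization, since every component of $H(\cdot,\xi)$ is semismooth over $\check{X}_2\times\check{Y}_2\times\mathbb{R}^{l_2+s_2}$, the map $H(\cdot,\xi)$ is semismooth there. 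I do not anticipate a serious obstacle: the proof is a direct application of semismoothness calculus. The only points needing care are verifying that the $\min$ operator is (strongly) semismooth as a piecewise affine map, and confirming that the affine terms and the fixed parameters $(x_1,y_1)$ leave the domain $\check{X}_2\times\check{Y}_2$ on which $\nabla F_2$ is assumed semismooth unaffected.
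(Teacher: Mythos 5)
Your proposal is correct and follows essentially the same route as the paper's proof: the paper likewise observes that the componentwise $\min$ operator is semismooth, that its arguments here are affine (with $x_1,y_1,\xi$ fixed), and then combines this with Assumption \ref{a:F2-semismooth} to conclude semismoothness of $H(\cdot,\xi)$. Your write-up merely makes explicit the standard calculus steps (sum rule, composition with smooth inner maps, componentwise characterization) that the paper leaves implicit.
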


\begin{proof}
Since  the ``min" operator is a semi-smooth operator \cite{pang1990newton,qi1993convergence}, and the corresponding functions in ``min" operator are linear functions, $\min( \cdot, h(\xi)-T(\xi)x_1-W(\xi)\cdot)$ and
$\min( \cdot, c(\xi)-A(\xi)y_1-B(\xi)\cdot)$ are semi-smooth. Combining with Assumption \ref{a:F2-semismooth},  $H(\cdot, \xi)$ is semi-smooth over $\check{X}_2\times \check{Y}_2\times \mathbb{R}^{l_2+s_2}$.
 \hfill $\square$
\end{proof}

Then we apply semi-smooth Newton method \cite{pang1990newton,qi1993convergence} to solve the system of nonlinear equations \eqref{eq:nonsmooth-equation}.
\begin{algorithm}[H]
		\caption{Semi-smooth Newton method for nonlinear equation \eqref{eq:nonsmooth-equation} }
		\begin{algorithmic}[1]
			\Require initial point $\mu_0$ and $t:=0$
               \While{$\|H(\mu^t, \xi)\|> \epsilon$}
.              \State  Solve $G(\mu^t, \xi)d^t=-H(\mu^t, \xi)$ for $d^t$, where $G(\mu^t, \xi)\in J H(\mu^t,\xi)$;
               \State   $\mu^{t+1}=\mu^t+\alpha^td^t$, $t=t+1$,
               \EndWhile
              \Ensure $\mu^{t+1}$
		\end{algorithmic}
	\end{algorithm}

We show the convergence analysis of Algorithm 2 based on \cite[Corollary 3.4 and Theorem 4.3]{qi1993convergence} as follows.

\begin{theorem}\label{t:convergence2}
Suppose that Assumptions \ref{a:second-stage}, \ref{a:xicompact-regular} 
and \ref{a:F2-semismooth} hold,
$\beta\in(0,1)$, $\sigma\in(0, 1/2)$, $\alpha^t=\beta^{m_t}$ and $m_t$ is  the first nonnegative integer $m$ such that $H(\mu^t,\xi)-H(\mu^t+\beta^{m}d^t,\xi)\geq -\sigma\beta^{m}H'(\mu^t, \xi; d^t)$, where
\[
H'(\mu, \xi; d):=\max_{J H(\mu,\xi)\in \partial_\mu H(\mu,\xi)}J H(\mu,\xi)^\top d.
\]
 Then for any $\xi\in\Xi$,
\begin{itemize}
\item[(i)] $\{\|H(\mu^t,\xi)\|\}$ converges to $0$ superlinearly and $\{\alpha_t\}$ eventually becomes $1$;
\item[(ii)] the entire sequence $\{\mu^t\}$ satisfies $\|\mu^t-\mu^*\|\leq \frac{1}{\sqrt{\underline{\lambda}}}\|H(\mu^t, \xi)\|$, where $\mu^*$ is the solution of problem \eqref{eq:nonsmooth-equation}.
\end{itemize}
\end{theorem}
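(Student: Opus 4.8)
The plan is to recognize Theorem~\ref{t:convergence2} as an instance of the globally convergent semismooth Newton scheme of Qi and Sun \cite{qi1993convergence}, so the real work is checking that $H(\cdot,\xi)$ satisfies the two structural hypotheses of that theory and then tracking the constant $1/\sqrt{\underline\lambda}$ through the error bound. First I would fix $\xi\in\Xi$ and record that \eqref{eq:nonsmooth-equation} has a \emph{unique} solution $\mu^*$: by Lemma~\ref{l:2stage-bound-continuity}(iii) the saddle-point pair $(\bar{x}_2,\bar{y}_2)$ is unique, and the full-row-rank condition in Assumption~\ref{a:second-stage}(iii) forces uniqueness of the multipliers (the same LICQ-type argument via \cite{wachsmuth2013licq} used in Lemma~\ref{l:2stage-bound-continuity}(i)). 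Proposition~\ref{p:semismooth} supplies semismoothness of $H(\cdot,\xi)$, and Proposition~\ref{p:lowerbound} supplies that every $JH(\mu,\xi)\in\partial_\mu H(\mu,\xi)$ is nonsingular with $\|JH(\mu,\xi)^{-1}\|\le 1/\sqrt{\underline\lambda}$; in particular $H$ is BD-regular at $\mu^*$. These are exactly the assumptions under which \cite{qi1993convergence} operates.

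For part~(i), I would work with the merit function $\theta(\mu)=\tfrac12\|H(\mu,\xi)\|^2$. Nonsingularity of $G(\mu^t,\xi)$ makes the Newton direction $d^t$ solving $G(\mu^t,\xi)d^t=-H(\mu^t,\xi)$ well defined and a descent direction for $\theta$, so the Armijo backtracking rule stated in the theorem terminates after finitely many trials; the uniform inverse bound of Proposition~\ref{p:lowerbound} prevents $\{d^t\}$ from degenerating. Global convergence of the damped iterates to $\mu^*$ is then \cite[Theorem~4.3]{qi1993convergence}. Once the iterates enter a neighborhood of $\mu^*$, semismoothness together with BD-regularity gives local superlinear convergence by \cite[Corollary~3.4]{qi1993convergence}; because $\sigma\in(0,1/2)$, the unit step eventually passes the Armijo test, so $\alpha^t\to 1$ and $\|H(\mu^t,\xi)\|\to 0$ superlinearly.

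For part~(ii) I would prove a \emph{global} error bound. Applying the generalized mean value theorem to the locally Lipschitz map $H(\cdot,\xi)$ along the segment $[\mu^*,\mu^t]$ and using $H(\mu^*,\xi)=0$, there is a matrix $\bar V$ in the convex hull of $\{\partial_\mu H(\zeta,\xi):\zeta\in[\mu^*,\mu^t]\}$ with $H(\mu^t,\xi)=\bar V(\mu^t-\mu^*)$. The favorable point is that the block form displayed before Proposition~\ref{p:lowerbound} is stable under convex combination: $M_2$ depends only on $\xi$ (hence is unchanged), the diagonal block $U$ stays in $[0,1]^{l_2+s_2}$, and the symmetric part of $M_1$ remains $\succeq\sigma I$ by strong convexity-concavity of $F_2(\cdot,\cdot,\xi)$. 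Thus the Schur-complement computation of Proposition~\ref{p:lowerbound} applies verbatim to $\bar V$, giving $\|\bar V(\mu^t-\mu^*)\|\ge\sqrt{\underline\lambda}\,\|\mu^t-\mu^*\|$, and therefore $\|\mu^t-\mu^*\|\le \tfrac{1}{\sqrt{\underline\lambda}}\|H(\mu^t,\xi)\|$, which is the bound in (ii).

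The hard part will be this last transfer of the lower singular-value bound. The constant $\underline\lambda$ is defined as a minimum over the individual generalized Jacobians $\partial_\mu H(\mu,\xi)$, whereas the mean-value matrix $\bar V$ lies only in the convex hull of such Jacobians along the segment and need not equal any single $\partial_\mu H(\zeta,\xi)$; a lower bound on the smallest singular value is not preserved by arbitrary convex combinations. The argument therefore hinges on verifying that the three defining ingredients of Proposition~\ref{p:lowerbound} (namely $\mathrm{sym}(M_1)\succeq\sigma I$, full row rank of $M_2$, and $U$ diagonal with entries in $[0,1]$) are each invariant under convex combination, so that the same eigenvalue/Schur-complement estimate yields the clean constant $\sqrt{\underline\lambda}$ for $\bar V$ as well. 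Everything outside this step is a direct appeal to \cite[Corollary~3.4 and Theorem~4.3]{qi1993convergence}.
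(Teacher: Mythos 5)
Your proposal follows essentially the paper's own route: part (i) is, exactly as in the paper, a direct appeal to \cite[Corollary 3.4 and Theorem 4.3]{qi1993convergence} after certifying uniqueness of $\mu^*$ (via Lemma \ref{l:2stage-bound-continuity}), semismoothness of $H(\cdot,\xi)$ (Proposition \ref{p:semismooth}), and strong BD-regularity with the uniform inverse bound (Proposition \ref{p:lowerbound}); and part (ii) in the paper is likewise a mean-value argument via \cite[Proposition 2.6.5]{clarke1990optimization} combined with $\|JH(\cdot,\xi)^{-1}\|\le 1/\sqrt{\underline{\lambda}}$.

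Where you genuinely differ is the mean-value step in (ii), and your version is the more careful one. The paper states the conclusion of Clarke's Proposition 2.6.5 as if it produced a single point $\bar{\mu}^t\in[\mu^t,\mu^*]$ with $H(\mu^t,\xi)=JH(\bar{\mu}^t,\xi)(\mu^t-\mu^*)$, so that Proposition \ref{p:lowerbound} applies directly; in fact that proposition only yields a matrix $\bar{V}$ in the convex hull of Jacobians taken at possibly different points of the segment, exactly as you observe. Your structural-invariance patch is sound as far as it goes: $M_2$ depends only on $\xi$, a convex combination of admissible $U$'s is again diagonal with entries in $[0,1]$, and the symmetric part of $\bar{M}_1$ stays $\succeq\sigma I$, so the Schur-complement argument of Proposition \ref{p:lowerbound} shows $\bar{V}$ is nonsingular. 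The one residual caveat is quantitative: $\underline{\lambda}$ is defined in Proposition \ref{p:lowerbound} as a minimum of $\lambda_{\min}(JH(\mu,\xi)^{\top}JH(\mu,\xi))$ over \emph{actual} Clarke Jacobians, and $\bar{V}$ need not lie in $\partial_\mu H(\zeta,\xi)$ for any single $\zeta$ (in particular $\bar{M}_1$, a convex combination of generalized Hessian elements at different second-stage points, need not be a generalized Hessian element at any one point). Nonsingularity of $\bar{V}$ therefore does not by itself deliver the bound with the \emph{same} constant; you should either rerun the compactness argument of Proposition \ref{p:lowerbound} over the compact convex-hull class of structurally constrained matrices — which works verbatim and yields a positive constant, arguably how $\underline{\lambda}$ ought to be defined in the first place — or accept a possibly smaller positive constant in (ii). With that one-line amendment your proof is complete, and it in fact repairs the imprecision hidden in the paper's own single-point mean-value statement.
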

\begin{proof}
Under Assumptions \ref{a:second-stage}-\ref{a:xicompact-regular}, by Lemma \ref{l:2stage-bound-continuity}, for any $\xi\in\Xi$, there exists a unique $\mu^*\in\mathbb{R}^{n_2+m_2+l_2+s_2}$ such that $H(\mu^*, \xi)=0$.

Under  Assumption 
\ref{a:F2-semismooth}, by
Propositions \ref{p:lowerbound}-\ref{p:semismooth},  for any $\xi\in\Xi$, $H(\cdot, \xi)$ is semi-smooth and strongly B-D regular at $\mu^*$, that is, for any $d\neq0$,
$
H'(\mu^*, \xi; d)\neq0.
$
Then by \cite[Corollary 3.4 and Theorem 4.3]{qi1993convergence}, we have $\|H(\mu^t,\xi)\|$ converges to $0$ superlinearly, and $\alpha_k$ eventually becomes $1$. Moreover, by \cite[Proposition 2.6.5]{clarke1990optimization}, there exists $\bar{\mu}^t$ in the line segment $[\mu^t, \mu^*]$ such that
\[
H(\mu^t,\xi) = JH(\bar{\mu}^t, \xi)(\mu^t-\mu^*),
\]
then $\|\mu^t-\mu^*\|\leq \|JH(\bar{\mu}^t,\xi)^{-1}\|\|H(\mu^t,\xi)\|\leq \frac{1}{\sqrt{\underline{\lambda}}}\|H(\mu^t,\xi)\|$.
 \hfill $\square$
\end{proof}

With Algorithm 2 and Theorem \ref{t:convergence2}, we can replace steps 2-4 in Algorithm 1 with the following procedure:
\begin{algorithmic}[1]
\renewcommand{\alglinenumber}[1]{} 
\For{$i = 1, 2, \cdots, N$}
    \State solve \eqref{eq:nonsmooth-equation} with given $(x_{1}^{k}, y_{1}^{k})$ and $\xi^{i}$ by Algorithm 2 (semi-smooth Newton method) with $\| H(\mu^{t+1}, \xi^i) \| < \epsilon^{k}$, where $\epsilon^{k} := \frac{\delta^{k} \sqrt{\underline{\lambda}}}{\max\{\bar{a}, \bar{t}\}}$, and obtain
    $\mu^{k,i} = \mu^{t+1}$.
\EndFor
\end{algorithmic}
The convergence result (Theorem~\ref{t:mainconvergence}) still holds for Algorithm 1.

\subsection{Numerical experiments}

In this section, we present preliminary numerical experiments about a two-stage stochastic two-player zero-sum game to validate both our theoretical framework and proposed algorithm. All the numerical experiments are conducted using MATLAB 2022b on a PC with 12th Gen Intel(R) Core(TM) i7-12700 running at 2.10 GHz and 32 GB of RAM.

{\bf Two-player two-stage stochastic zero-sum game:}  Two-stage quadratic SNEPs are investigated under the monotone condition \cite{pang2017two,zhang2019two,lei2020synchronous}. Here we consider a nonconvex-nonsmooth extension of the two-stage SNEPs,
as the two-stage stochastic minimax problem \eqref{eq:ts-minimax}-\eqref{eq:st-minimax}, where
\[
F_1(x_1, y_1):=\|x_1\|_1 - \frac{1}{2}x_{1}^\top Q_{1}x_{1} + (d_1)^\top x_{1}  +  x_{1}^\top O_{1}y_{1} - \frac{1}{2}y_{1}^\top S_{1}y_{1} - (t_1)^\top y_{1},
\]
\[
F_2(x_2, y_2, \xi):= \frac{1}{2}x_{2}^\top Q_{2}(\xi)x_{2} + (d_2(\xi))^\top x_{2} +x_{2}^\top O_{2}(\xi)y_{2} - \frac{1}{2}y_{2}^\top S_{2}(\xi)y_{2} - (t_2(\xi))^\top y_{2},
\]
$\xi:\Omega\to \Xi\subset\mathbb{R}^l$ is a random variable, $X_1:=[{\rm lb},{\rm ub}]^{n_1}\subset\mathbb{R}^{n_1}$, $Y_1=\mathbb{R}^{m_1}$, $Q_1\in\mathbb{R}^{n_{1}\times n_{1}}$, $O_1\in\mathbb{R}^{n_1\times m_1}$, $S_1\in\mathbb{R}^{m_{1}\times m_{1}}$, $d_1\in\mathbb{R}^{n_1}$,  $t_1\in\mathbb{R}^{m_1}$,
$$
X_2(x_1, \xi):=\{x_2\in \mathbb{R}^{n_2} : T(\xi)x_1+W(\xi)x_2\leq h(\xi) \},
$$
$$
Y_2(y_1, \xi):=\{y_2\in  \mathbb{R}^{m_2} : A(\xi)y_1+B(\xi)y_2\leq c(\xi) \},
$$
  $Q_2(\xi)\in\mathbb{R}^{n_{2}\times n_{2}}$, $S_2(\xi)\in\mathbb{R}^{m_{2}\times m_{2}}$  are symmetric positive definite matrices,  $O_2(\xi)\in\mathbb{R}^{n_2\times m_2}$,   $d_2(\xi)\in \mathbb{R}^{n_2}$,  $t_2(\xi)\in \mathbb{R}^{m_2}$, $T(\xi)\in\mathbb{R}^{l_2\times n_1}$, $W(\xi)\in\mathbb{R}^{l_2\times n_2}$, $A(\xi)\in\mathbb{R}^{s_2\times m_1}$, $B(\xi)\in\mathbb{R}^{s_2\times m_2}$, $h(\xi)\in\mathbb{R}^{l_2}, c(\xi)\in\mathbb{R}^{s_2}$ for  a.e. $\xi\in\Xi$.

The setting of this two-stage stochastic minimax problem
is as follows.

\begin{itemize}
\item
First stage setting: $n_1=3, m_1=2$, $Q_1=0.1 I_{n_1}$, $S_1=I_{m_1}$, $O_1\in\mathbb{R}^{n_1\times m_1}$, $d_1\in \mathbb{R}^{n_1}$,  $t_1\in \mathbb{R}^{m_1}$ with all elements randomly generated from a uniform distribution over $[0,1]$.

\item Fixed matrices and vectors in second-stage problems: $n_2=4, m_2=3, l_2=s_2=2$, $\bar{Q}_2={\rm diag}(1,2,3,4)$,  $\bar{S}_{2}=I_{m_2}$, 
$\bar{h}=(0.1, 0.1)^\top$, $\bar{c}=(0.1, 0.1)^\top$. $\bar{O}_2$, $\bar{T}$, $\bar{A}$, $\bar{d}_2$, $\bar{t}_2$ are randomly generated matrices or vectors with corresponding dimensions, where each element is drawn from a uniform distribution over $[0,1]$.

\item Random matrices and vectors in second-stage problems: A $49$-dimensional random vector $\xi$, where each component is independently sampled from a uniform distribution over $[-1,1]$. The first $\frac{n_2\times(n_2+1)}{2}$ elements form the upper triangular part of the random symmetric matrix $\tilde{Q}_2(\xi)\in\mathbb{R}^{n_2\times n_2}$; the next $\frac{m_2\times(m_2+1)}{2}$ elements form the upper triangular part of the random symmetric matrix $\tilde{S}_2(\xi)\in\mathbb{R}^{m_2\times m_2}$; the remaining elements construct random matrices and vectors $\tilde{T}(\xi)$, $\tilde{A}(\xi)$, $\tilde{d}_2(\xi)$, $\tilde{t}_2(\xi)$. For each experimental setup, we generate $N$ i.i.d. samples.
\item Second-stage setting: $Q_2(\xi)=\tau\bar{Q}_2+0.1\tilde{Q}_2(\xi)$, $S_2(\xi)=\tau\bar{S}_2+0.1\tilde{S}_2(\xi)$, $O_2(\xi)=\bar{O}_2+0.1\tilde{O}_2(\xi)$, $W(\xi)=(I_{l_2}, {\bm 0})$, $B(\xi)=(I_{s_2}, {\bm 0})$, $h(\xi)=\bar{h}+0.1\tilde{h}(\xi)$, $c(\xi)=\bar{c}+0.1\tilde{c}(\xi)$, $T(\xi)=\bar{T}+0.1\tilde{T}(\xi)$,  $d_2(\xi)=\bar{d}_2+0.1\tilde{d}_2(\xi)$, $t_2(\xi)=\bar{t}_2+0.1\tilde{t}_2(\xi)$, where ${\bm 0}$ denotes the zero matrix of appropriate dimension as specified by the context.

\item The residual value (Res.val) is defined as follows:
\[
\begin{array}{lll}
{\rm Res.val}(x_1^k, y_1^k)&:=\|-S_1y_1^k-t_1+O_1^\top x_1^k+\tilde{v}_y^k\|\\
     & +  \displaystyle{\min_{\eta\in\partial \|x_1^k\|_1}}\|x_1^k-{\rm mid}(x_1^k-\eta-w^k, {\rm ub, lb})\|,
\end{array}
\]
where $w^k=(d_1 + O_1y_1^k + \tilde{v}_x^k-Q_1x_1^k)$ and ``mid''
denotes the component-wise median operator on three vectors. In our computation, we use a minimizer $\eta^k$ of the above minimization problem  in the following form:
$$
(\eta^k)_i =
\begin{cases}
-1, & \text{if } (x^k_1)_i <0  \,\,  {\rm or} \, \,(x^k_1)_i=0 \,\,  {\rm and} \,\,   w^k_i > 1, \\
1, & \text{if } (x^k_1)_i >0 \,  \, {\rm or} \, \,(x^k_1)_i=0 \,\,  {\rm and} \,\,   w^k_i < 1, \\
-w^k_i, & \text{if } (x^k_1)_i=0 \, \, {\rm and} \,\,   w^k_i \in [-1,1].
\end{cases}
$$

\item
The termination criterion is set as Res.val$\leq 10^{-4}$.

\end{itemize}

{\bf  Experiment 1:} We set the sample size to $N=50$, with lower bound ${\rm lb}=-10$ and upper bound ${\rm ub}=10$.
 Five initial points $(x_1^0, y_1^0)$ are randomly generated with $x_1^0$ components uniform on $[7,10]$ and $y_1^0$ components uniform on $[0,1]$. For $\tau=0.1$ and $0.5$,
 we randomly generated five problems with five different initial points per $\tau$-value. The convergence behavior of IPPGDA algorithm regarding  the residual values is shown in Fig. \ref{fig:total}.

 Note that $Q_2(\xi)=\tau\bar{Q}_2+0.1\tilde{Q}_2(\xi)$, $S_2(\xi)=\tau\bar{S}_2+0.1\tilde{S}_2(\xi)$.
Fig. \ref{fig:total} demonstrates that smaller $\tau$ values lead to more divergent convergence paths, attributable to stronger stochastic influences at lower $\tau$ values.
Moreover,  when $\tau<0.1$, $Q_2(\xi)$ and $S_2(\xi)$ may lose positive definiteness, leading to potential algorithmic non-convergence.
\begin{figure}[H]
    \centering
    \subfloat{
        \includegraphics[width=0.48\textwidth, height=5cm]{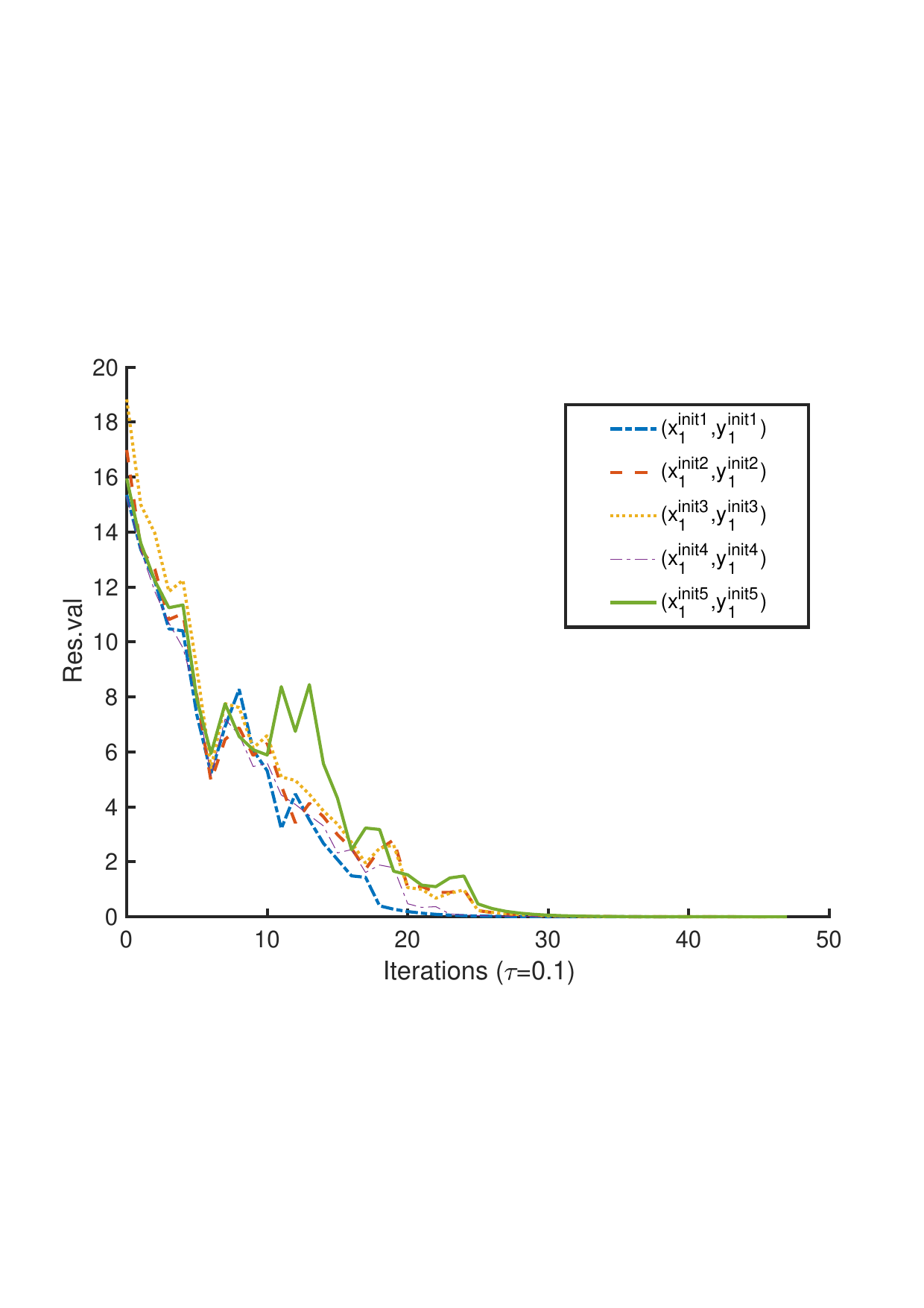}
    }
    \hfill
    \subfloat{
        \includegraphics[width=0.48\textwidth, height=5cm]{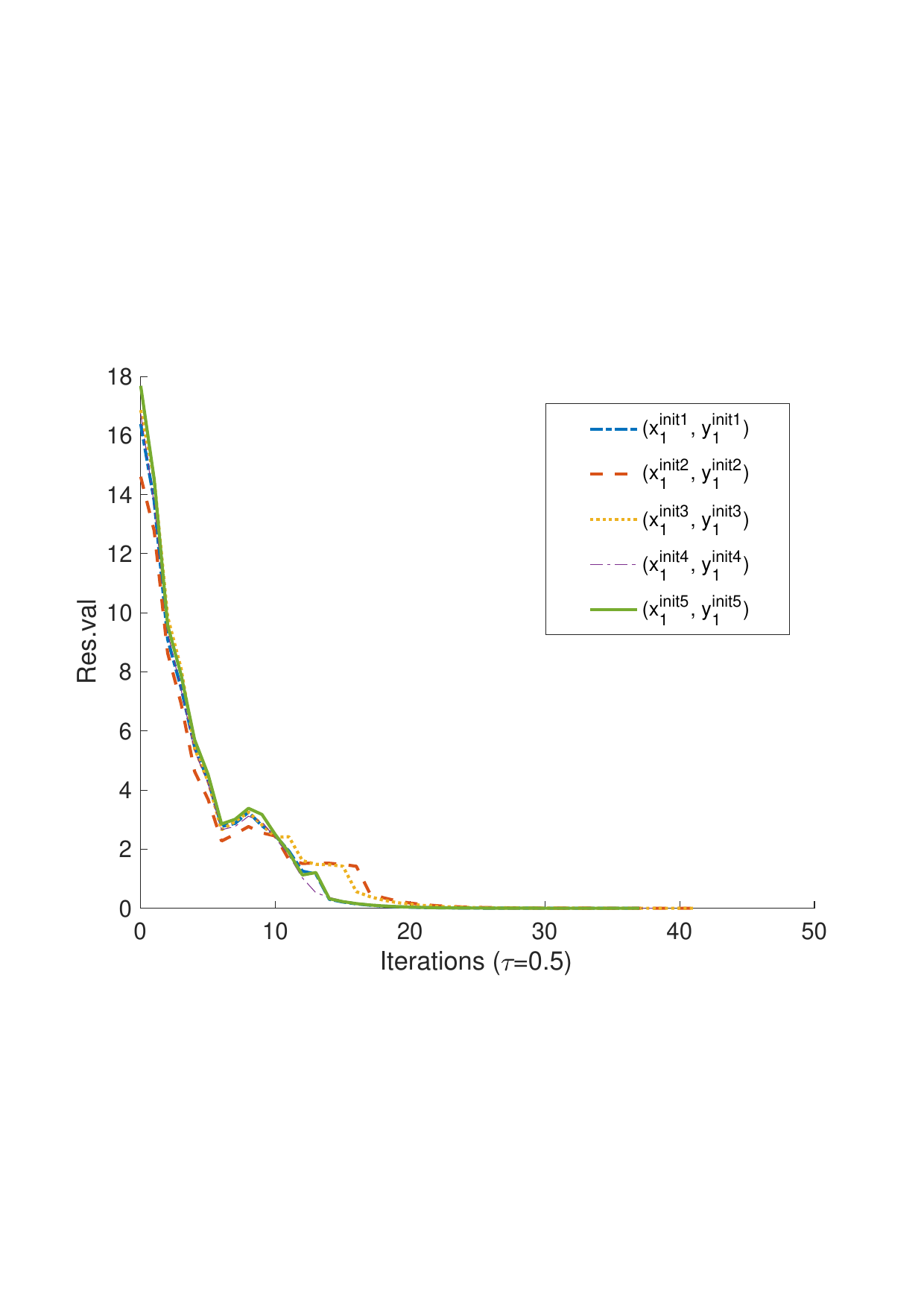}
    }
    \caption{Residual values versus iteration numbers with different starting points and random matrices for $\tau=0.1$ and $0.5$, respectively}
   \label{fig:total}
\end{figure}

{\bf  Experiment 2:} We conduct numerical experiments with sample sizes $N=10,50,200,500,1000, 3000$. The feasible set  $X_1$ is defined by the box constraints $ [-10, 10]^{3}$ and $[-20, 20]^{3}$.
For each combination of sample sizes and constraint sets, we randomly generate 30 instances of the two-stage stochastic minimax problem and solve them using IPPGDA algorithm. The resulting minimax values are shown in Fig. \ref{fig:fixed}.

\begin{figure}[H]
    \centering
    \subfloat{
        \includegraphics[width=0.48\textwidth, height=5cm]{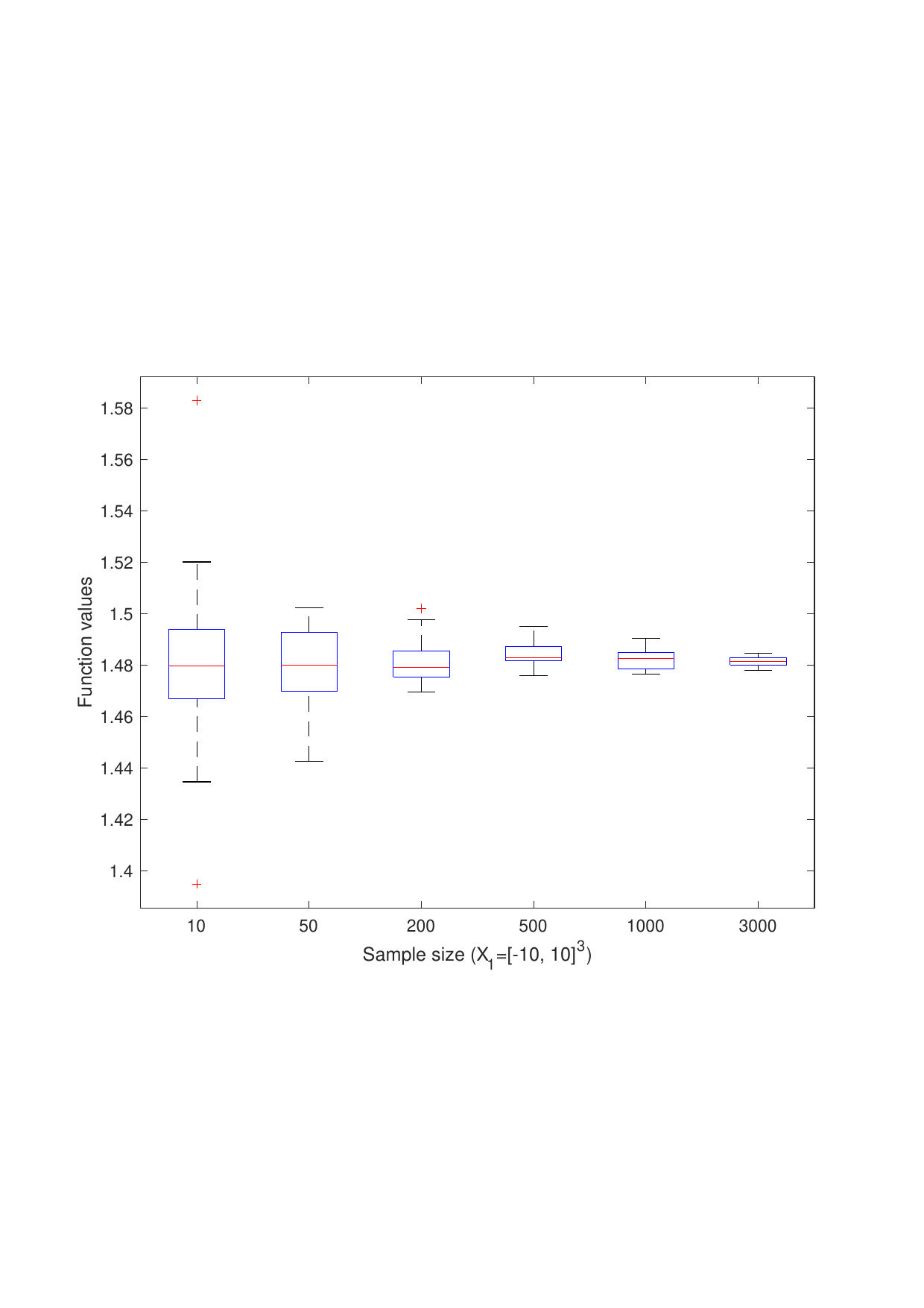}
    }
    \hfill
    \subfloat{
        \includegraphics[width=0.48\textwidth, height=5cm]{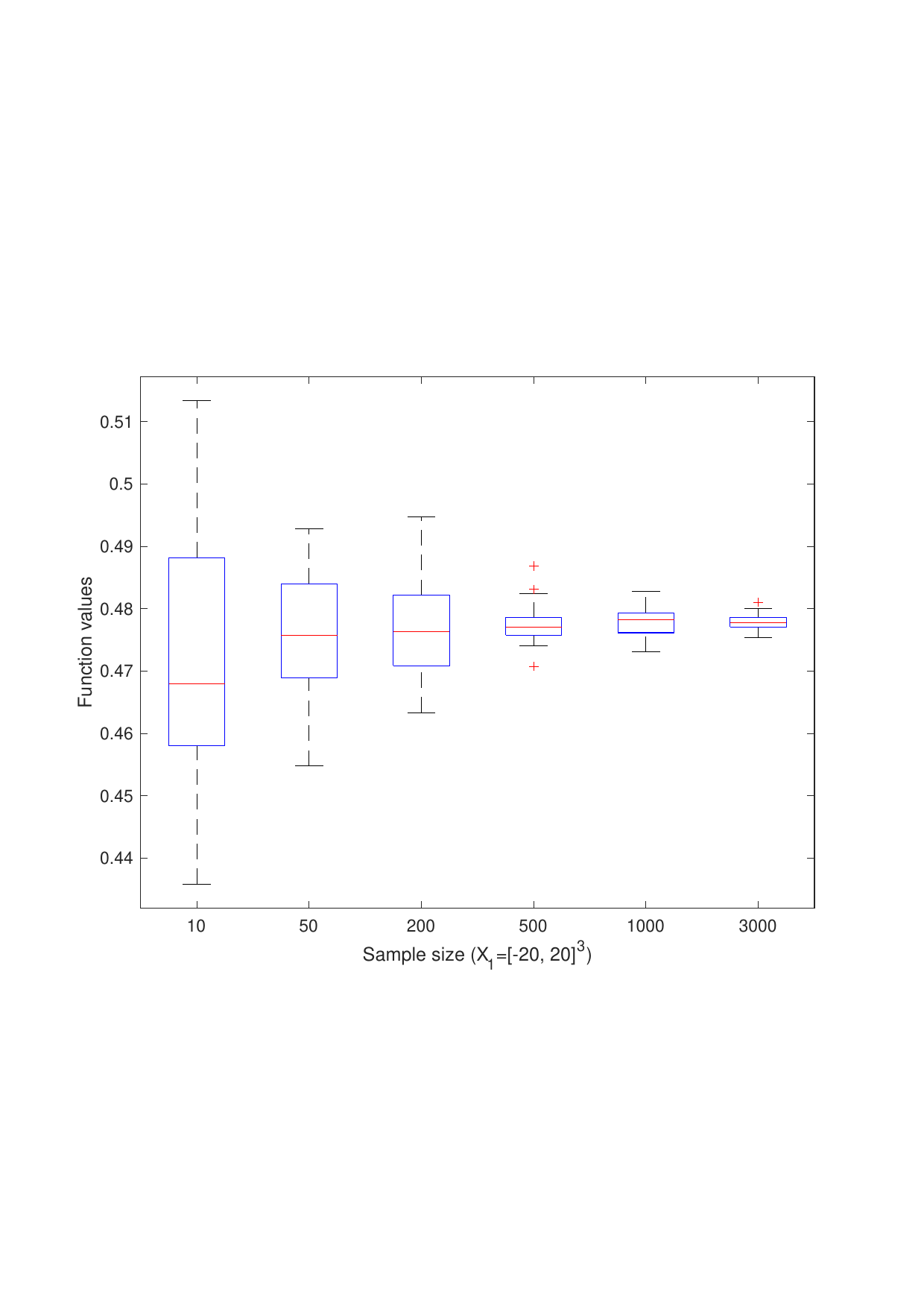}
    }
    \caption{Convergence of the SAA problems when $X_1 = [-10, 10]^{3}$ and $X_1= [-20, 20]^{3}$, respectively}
   \label{fig:fixed}
\end{figure}

Fig. \ref{fig:fixed} shows that the SAA problems exhibit convergent behavior as the sample size increases, which is consistent with theoretical expectations. Moreover, a larger feasible set $X_1$ leads to a smaller objective function value, which also aligns with our expectations.

 \section{Conclusion}\label{sec13}

In this paper, we introduce the two-stage stochastic minimax problem (\ref{eq:ts-minimax})-(\ref{eq:st-minimax}), analyze the Lipschitz continuity of second-stage minimax value function and solution functions, along with the properties and relationships among saddle points, minimax points and KKT points. We further prove the convergence of the SAA method for problem (\ref{eq:ts-minimax})-(\ref{eq:st-minimax}), and discuss exponential convergence rates as the sample size goes to infinity.
To solve the SAA problem, we propose an IPPGDA algorithm. The algorithm utilizes a semi-smooth Newton approach to solve second-stage subproblems, obtaining approximate gradients of the second-stage minimax value function which are subsequently integrated into an inexact first-stage proximal gradient scheme for the minimax problem. Preliminary numerical experiments demonstrate the effectiveness of IPPGDA algorithm while validating the convergence properties of the SAA approach.


\section{Appendix: A generalization of Theorem 2G.8 in \cite{dontchev2009implicit} (implicit function theorem for stationary points)}

In \cite{dontchev2009implicit}, Dontchev and Rockafellar gave Theorem 2G.8,  an  implicit function theorem for stationary points for
the parametric nonlinear programming problem in the form
\begin{equation}\label{eq:imp-model}
    \begin{array}{cl}
       \min_z  & h_0(p,z)  \\
        {\rm s.t.} & h_i(p,z)\leq 0, \; i\in[1,s],\\
             & h_i(p,z)=0, \; i\in [s+1, m]
    \end{array}
\end{equation}
with parameter $p$, where $h_i:\mathbb{R}^k\times \mathbb{R}^n \to \mathbb{R}$ is twice continuously differentiable, $i=0, \cdots, m$.  
Let
\[
L(p, z, \tau)=h_0(p,z) + \tau_1h_1(p,z) + \cdots + \tau_mh_m(p,z).
\]
For a fixed $p$, the variational inequality capturing the associated first-order conditions of problem \eqref{eq:imp-model} is
\begin{equation}\label{eq:KKT}
(0,0) \in H(p,z,\tau) + N_E(z,\tau),
\end{equation}
where
\[
H(p, z,\tau) = (\nabla_z L(p,z,\tau)^{\top}, -\nabla_\tau L(p, z,\tau)^{\top})^{\top}, \; E=\mathbb{R}^n\times \mathbb{R}_+^s\times \mathbb{R}^{m-s}.
\]
Let
\begin{equation}\label{eq:solution-mapping}
S(p):= \{(z, \tau)| (0,0)\in H(p, z,\tau) + N_E(z,\tau)\}
\end{equation}
 be the solution mapping of the generalized equation \eqref{eq:KKT} and assume that $S(p)$ is nonempty in a neighborhood of $\bar{p}$.

To study the two-stage stochastic minimax problem \eqref{eq:ts-minimax}-\eqref{eq:st-minimax},
 we need an implicit function theorem for problems \eqref{eq:f21} and \eqref{eq:bbfmin}, where the objective functions $F_2$ and  $f_{21}$ are continuously differentiable but not twice continuously differentiable. Now we give implicit function theorem for  \eqref{eq:imp-model} where
 $h_i$ is  continuously differentiable and $\nabla_zh_i$ is Lipschitz continuous, $i=0, \cdots, m$.
The theorem is extended from \cite[Theorem 2G.8]{dontchev2009implicit}, and weakens the twice continuous differentiability of $h_i$ to the continuous differentiability of $h_i$.

 For any $(\bar{z}, \bar{\tau})\in S(\bar{p})$, we give an auxiliary problem of \eqref{eq:imp-model}. Let $\bar{G}\in \partial_{zz}^2 L(\bar{p}, \bar{z}, \bar{\tau})$,
\[
    \bar{h}_0(w, \bar{G}) := L(\bar{p}, \bar{z}, \bar{\tau}) + \langle \nabla_z L(\bar{p}, \bar{z}, \bar{\tau}), w \rangle + \frac{1}{2}\langle w, \bar{G}w \rangle
    \]
    and
    \[
    \bar{h}_i(w) := h_i(\bar{p}, \bar{z}) + \langle \nabla_z h_i(\bar{p}, \bar{z}), w \rangle, \; \forall i=1, \cdots, m.
\]
Then the auxiliary problem of \eqref{eq:imp-model} with parameters $\nu$ and $u_1, \cdots, u_m$ is
\begin{equation*}\label{eq:imp-auxiliary}
\begin{array}{lll}
\min_{w} & \bar{h}_0(w, \bar{G}) - \langle \nu, w  \rangle \\
{\rm s.t.} & \bar{h}_{i}(w) + u_i\leq 0, \,\,  i\in I_0,\\
& \bar{h}_{i}(w) + u_i=0, \,\,  i\in I/I_0,\\
& \bar{h}_{i}(w) + u_i {\;free\;}, \,\, i\in I_1,
\end{array}
\end{equation*}
where
$$
\begin{aligned}
&I = \{ i \in [1,m] : h_i(\bar{p},\bar{z}) = 0 \} \supset \{ s+1,\ldots,m \}, \\
&I_0 = \{ i \in [1,s] : h_i(\bar{p},\bar{z}) = 0 \ \text{and}\ \bar{\tau}_i = 0 \}, \\
&I_1 = \{ i \in [1,s] : h_i(\bar{p},\bar{z}) < 0 \}.
\end{aligned}
$$
For any $\bar{G}\in \partial_{zz}^2 L(\bar{p}, \bar{z}, \bar{\tau})$, let
\[
\bar{L}(w, \varsigma, \bar{G})-\langle \nu, w\rangle + \langle \varsigma, u \rangle:= \bar{h}_0(w, \bar{G}) - \langle \nu, w\rangle + \varsigma_1(\bar{h}_1(w) + u_1) + \cdots +\varsigma_m(\bar{h}_m(w) + u_m).
\]
The corresponding first-order conditions are given by the variational inequality
\[
(0,0)\in (\nabla_w \bar{L}(w, \varsigma, \bar{G}), -\nabla_\varsigma \bar{L}(w, \varsigma, \bar{G})) - ( \nu, u) + N_{\bar{E}}(w, \varsigma),
\]
where $\bar{E} = \mathbb{R}^n\times \{\varsigma\in \mathbb{R}^m: \varsigma_i\geq 0, \; \forall i\in I_0 \; {\rm and}\; \varsigma_i= 0, \; \forall i\in I_1\}$. Let
\begin{equation}\label{eq:solution-aux}
\begin{array}{l}
M^+=\{w\in\mathbb{R}^n: w\bot\nabla_zh_i(\bar{p}, \bar{z}) \inmat{ for all } i\in I \setminus I_0\},\\
M^-=\{w\in\mathbb{R}^n: w\bot\nabla_zh_i(\bar{p}, \bar{z}) \inmat{ for all } i\in I\},
\end{array}
\end{equation}
$\bar{S}(\nu,u, \bar{G}):=\{ (w, \varsigma):  (0,0)\in (\nabla_w \bar{L}(w, \varsigma, \bar{G}), -\nabla_\varsigma \bar{L}(w, \varsigma, \bar{G})) - \langle \nu, u\rangle + N_{\bar{E}}(w, \varsigma)\}.$

Now we present the implicit function theorem for stationary points of \eqref{eq:imp-model}.

\begin{theorem}\label{t:imp}
Let \((\bar{z}, \bar{\tau}) \in S(\bar{p})\) for the mapping \(S\) in \eqref{eq:solution-mapping}, constructed from functions \(h_i\) that are continuously differentiable such that $\nabla_zh_i$ is Lipschitz continuous.
Assume
$$\begin{array}{l}
\text{(A1): For any } \bar{G}\in \partial_{zz}^2 L(\bar{p}, \bar{z}, \bar{\tau}), \bar{S}(\cdot, \cdot, \bar{G})
\text{ has a Lipschitz continuous }\\
\text{single-valued localization } \bar{s} \text{ around } (0,0) \text{ for } (0,0).
\end{array}
$$
Then $S$ has a Lipschitz continuous single-valued localization $s$ around $\bar{p}$ for $(\bar{z}, \bar{\tau})$.

Moreover,  condition (A1)
is necessary for the existence of a Lipschitz continuous single-valued localization of \(S\) around \(\bar{p}\) for \((\bar{z}, \bar{\tau})\). 

In particular, $\bar{S}(\cdot, \cdot, \bar{G})$ is sure to have the property in (A1)   when the following conditions are both fulfilled:
\begin{itemize}
\item[(a)]  the gradients  $\nabla_x h_i(\bar{p}, \bar{z})$  for  $i \in I$  are linearly independent;
\item[(b)] there exists $\iota>0$ such that for all $\bar{G}\in \partial_{zz}^2 L(\bar{p}, \bar{z}, \bar{\tau})$, $\langle w, \bar{G} w \rangle > \iota$  for every nonzero $ w \in M^+$  with  $\bar{G} w \perp M^-,$
with \(M^+\) and \(M^-\) as in \eqref{eq:solution-aux}.
\end{itemize}
On the other hand, condition (A1) always entails at least (a).
\end{theorem}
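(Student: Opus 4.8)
The plan is to follow the proof architecture of \cite[Theorem~2G.8]{dontchev2009implicit}, replacing the single Hessian $\nabla_{zz}^2 L(\bar p,\bar z,\bar\tau)$ (available only when $h_i\in C^2$) by the Clarke generalized Hessian $\partial_{zz}^2 L(\bar p,\bar z,\bar\tau)$, and to reduce the statement to a nonsmooth implicit function theorem for the generalized equation \eqref{eq:KKT}. First I would record that, since each $h_i\in C^1$ with $\nabla_z h_i$ Lipschitz, the map $(z,\tau)\mapsto H(p,z,\tau)$ is locally Lipschitz (indeed affine in $\tau$), so its Clarke generalized Jacobian in $(z,\tau)$ is contained in the set of block matrices $\left(\begin{smallmatrix}\bar G & \nabla_z h^\top\\ -\nabla_z h & 0\end{smallmatrix}\right)$ with $\bar G\in\partial_{zz}^2 L(\bar p,\bar z,\bar\tau)$, where $\nabla_z h$ collects the rows $\nabla_z h_i(\bar p,\bar z)$. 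Linearizing $H$ at $(\bar z,\bar\tau)$ in the variable $(z,\tau)$ with such a selection $\bar G$ reproduces exactly the first-order conditions of the auxiliary problem; hence $\bar S(\cdot,\cdot,\bar G)$ is precisely the solution mapping of the partially linearized generalized equation attached to $\bar G$, and condition (A1) is the requirement that every such linearization be strongly regular.

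The equivalence between (A1) and the existence of a Lipschitz single-valued localization of $S$ I would obtain through strong metric regularity. The generalized equation \eqref{eq:KKT} has the form $0\in H(p,\cdot,\cdot)+N_E$ with $H(p,\cdot,\cdot)$ Lipschitz, and its strong metric regularity at $(\bar z,\bar\tau)$ for $0$ is equivalent to the strong regularity of all of its partial linearizations, i.e. to (A1). For the sufficiency direction I would run a Robinson-type contraction argument: outer semicontinuity and local boundedness of $\partial_{zz}^2 L$, together with the Lipschitz continuity of the $\nabla_z h_i$, furnish a uniform bound on the linearization remainder $H(p,z,\tau)-\big[H(\bar p,\bar z,\bar\tau)+\bar G(z-\bar z,\tau-\bar\tau)\big]$ that is dominated by the common Lipschitz modulus of the localizations $\bar s$; passing the estimate through the $C^1$-dependence on $p$ then delivers the Lipschitz single-valued localization $s$ of $S$ around $\bar p$. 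Reading the same equivalence in the opposite direction gives the asserted necessity of (A1).

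It remains to handle the sufficient conditions (a)--(b) for (A1) and the necessity of (a). Under (a) the active gradients $\nabla_z h_i(\bar p,\bar z)$, $i\in I$, are linearly independent, so the auxiliary problem obeys LICQ and its multiplier $\varsigma$ is uniquely determined; under (b) the form $\langle w,\bar G w\rangle$ is uniformly positive, by the single constant $\iota$, on the critical subspace described by $M^+$ and $M^-$ for every $\bar G\in\partial_{zz}^2 L$, which is exactly the strong second-order sufficient condition for the auxiliary quadratic program. The classical implication that LICQ together with the strong second-order sufficient condition yields strong regularity of the stationarity map then produces the localization in (A1), uniformly in $\bar G$ because $\iota$ is common to all selections. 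Conversely, (A1) forces (a): if the active gradients were linearly dependent, a nonzero combination would generate a direction of multipliers along which the first-order conditions remain satisfied, contradicting single-valuedness of $\bar s$. The step I expect to be the main obstacle is the nonsmooth passage in the second paragraph --- from strong regularity of every linearization $\bar G$ to strong metric regularity of the genuinely nonsmooth $H+N_E$ --- because for merely Lipschitz data a single regular selection of the generalized Jacobian does not suffice, and one must control the linearization error uniformly over the entire set $\partial_{zz}^2 L$; it is precisely the Lipschitz continuity of $\nabla_z h_i$ that secures this uniform control and lets the fixed-point argument close.
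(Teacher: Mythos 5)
Your skeleton matches the paper's up to the reduction step: like the paper, you identify the block structure of the Clarke generalized Jacobian of $H$ in $(z,\tau)$, recognize $\bar{S}(\cdot,\cdot,\bar{G})$ as the solution map of the partial linearizations, and reduce the theorem to a nonsmooth implicit function theorem for the generalized equation \eqref{eq:KKT}. But where the paper simply verifies that (A1) is parametric CD-regularity in the sense of \cite[Definition 4]{izmailov2014strongly} and then invokes \cite[Theorem 3]{izmailov2014strongly} together with \cite[Theorem 2E.6]{dontchev2009implicit} and \cite[Theorem 4]{izmailov2014strongly}, you attempt to re-prove that machinery, and the two places where you do so are exactly where the proposal has genuine gaps. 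First, the asserted ``equivalence'' of strong metric regularity of $0\in H(p,\cdot,\cdot)+N_E$ with strong regularity of \emph{all} partial linearizations cannot be read ``in the opposite direction'': for merely locally Lipschitz data the necessity half is false in general --- already for equations ($N_E=\{0\}$), a Lipschitz homeomorphism of $\mathbb{R}^n$ with $n\geq 2$ can have singular matrices in its Clarke generalized Jacobian --- so a Lipschitz single-valued localization of $S$ does not by itself force every $\bar{G}\in\partial_{zz}^2 L(\bar{p},\bar{z},\bar{\tau})$ to yield a strongly regular linearization. The necessity of (A1) rests on the specific parametric KKT structure and is exactly what the cited results of \cite{izmailov2014strongly} supply; in your write-up it is asserted, not proved.

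Second, your contraction mechanism for the sufficiency direction does not close as described. With $\nabla_z h_i$ merely Lipschitz, the remainder $H(p,z,\tau)-\bigl[H(\bar{p},\bar{z},\bar{\tau})+\bar{G}\,((z,\tau)-(\bar{z},\bar{\tau}))\bigr]$ at a \emph{fixed} $\bar{G}$ is only $O(\|(z,\tau)-(\bar{z},\bar{\tau})\|)$ with a constant on the order of the Lipschitz modulus of $\nabla_z L$; a remainder ``dominated by the common Lipschitz modulus of the localizations $\bar{s}$'' is not the smallness that Robinson's fixed-point scheme requires --- the remainder's Lipschitz constant must be made arbitrarily small by shrinking the neighborhood, which fails pointwise in $\bar{G}$, so the Lipschitz continuity of $\nabla_z h_i$ alone does not ``secure this uniform control.'' What actually rescues the argument, and is the content of the theorem the paper cites, is the set-valued first-order approximation via Clarke's mean value inclusion combined with the outer semicontinuity and local boundedness of $\partial_{zz}^2 L$, giving an error that is $o(\|\cdot\|)$ relative to the whole compact convex set of generalized Hessians, together with a uniform regularity/Lipschitz modulus for $\bar{s}(\cdot,\cdot,\bar{G})$ over all $\bar{G}$ extracted by a compactness-and-stability argument that you currently assume without proof. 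By contrast, your treatment of (a)+(b)$\Rightarrow$(A1) (LICQ plus the second-order condition made uniform by the common constant $\iota$) and of (A1)$\Rightarrow$(a) agrees with the paper, which takes these parts verbatim from the proof of \cite[Theorem 2G.8]{dontchev2009implicit} and omits them.
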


\begin{proof}
  The result is obtained by applying \cite[Theorem 3]{izmailov2014strongly} with \cite[Theorem 2E.6]{dontchev2009implicit} to the variational inequality \eqref{eq:KKT}.

Let $h=(h_1, \cdots, h_m)$.
  Then
  $\nabla_\tau L(p, z, \tau) =h(p, z)$, and
  the Clarke generalized Jacobian of $H$ at $(\bar{p}, \bar{z}, \bar{\tau})$ is
  \[
  JH(\bar{p}, \bar{z}, \bar{\tau})=
  \begin{pmatrix}
      \partial_{zz}^2 L(\bar{p}, \bar{z}, \bar{\tau}) & \nabla_zh(\bar{p}, \bar{z})\\
      -\nabla_zh(\bar{p}, \bar{z}) & 0
  \end{pmatrix}.
  \]
  For any $\bar{G}\in \partial_{zz}^2 L(\bar{p}, \bar{z}, \bar{\tau})$, let
  \[
  J\bar{H}(\bar{p}, \bar{z}, \bar{\tau}, \bar{G}):=
  \begin{pmatrix}
      \bar{G} & \nabla_zh(\bar{p}, \bar{z})\\
      -\nabla_zh(\bar{p}, \bar{z}) & 0
  \end{pmatrix}.
  \]

Note that the critical cone to the polyhedral convex cone set $E$ is
  \[
  K_E(\bar{z}, \bar{\tau}, -H(\bar{p}, \bar{z}, \bar{\tau}))=\bar{E},
  \]
  (A1)  is equivalent to, for any $\bar{G}\in \partial_{zz}^2 L(\bar{p}, \bar{z}, \bar{\tau})$,
  \[
  \delta\in H(\bar{p}, \bar{z}, \bar{\tau}) + J\bar{H}(\bar{p}, \bar{z}, \bar{\tau}, \bar{G})((z^\top, \tau^\top)^\top)+N_E((z^\top, \tau^\top)^\top)
  \]
  has a Lipschitz continuous single-valued localization around $0$ for $(\bar{z}, \bar{\tau})$, that implies the variational inequality \eqref{eq:KKT} is parametrically CD-regular (see \cite[Definition 4]{izmailov2014strongly}). Note
  also that, by the Lipschitz continuity of $\nabla_zh_i$, we have $H$ is Lipschitz continuous. Then by \cite[Theorem 4]{izmailov2014strongly}, $S$ has a Lipschitz continuous single-valued localization $s$ around $\bar{p}$ for $(\bar{z}, \bar{\tau})$.

  The rest of the proof is to establish that, for any $\bar{G}\in \partial_{zz}^2 L(\bar{p}, \bar{z}, \bar{\tau})$, (A1)  implies (a) and prove (a) and (b) are the sufficient condition of (A1). They are the same as the corresponding proof in \cite[Theorem 2G.8]{dontchev2009implicit}, and we omit them here.
 \hfill $\square$
\end{proof}

\bibliographystyle{spmpsci}      
\bibliography{ref}

@book{shapiro2021lectures,
  title={Lectures on Stochastic Programming: Modeling and Theory},
  author={Shapiro, Alexander and Dentcheva, Darinka and Ruszczynski, Andrzej},
  year={2021},
  publisher={SIAM}
}

@book{birge1997introduction,
  title={Introduction to Stochastic Programming},
  author={Birge, John R and Louveaux, Francois},
  year={1997},
  publisher={Springer}
}

@article{liu2009two,
  title={A two-stage stochastic programming model for transportation network protection},
  author={Liu, Changzheng and Fan, Yueyue and Ord{\'o}{\~n}ez, Fernando},
  journal={Comput. Oper. Res.},
  volume={36},
  pages={1582--1590},
  year={2009},
  publisher={Elsevier}
}

@book{rockafellar1998variational,
  title={Variational Analysis},
  author={Rockafellar, R Tyrrell and Wets, Roger JB},
  year={1998},
  publisher={Springer}
}

@article{liu2021two,
  title={Two-stage stochastic optimization via primal-dual decomposition and deep unrolling},
  author={Liu, An and Yang, Rui and Quek, Tony QS and Zhao, Min-Jian},
  journal={IEEE Trans. Signal Process.},
  volume={69},
  pages={3000--3015},
  year={2021},
  publisher={IEEE}
}

@article{lee2018resource,
  title={Resource allocation for multi-channel underlay cognitive radio network based on deep neural network},
  author={Lee, Woongsup},
  journal={IEEE Commun. Lett.},
  volume={22},
  pages={1942--1945},
  year={2018},
  publisher={IEEE}
}

@article{v1928theorie,
  title={Zur theorie der gesellschaftsspiele},
  author={v. Neumann, J},
  journal={Math. Ann.},
  volume={100},
  pages={295--320},
  year={1928},
  publisher={Springer}
}

@article{chen2017two,
  title={Two-stage stochastic variational inequalities: an {ERM}-solution procedure},
  author={Chen, Xiaojun and Pong, Ting Kei and Wets, Roger JB},
  journal={Math. Program.},
  volume={165},
  pages={71--111},
  year={2017},
  publisher={Springer}
}

@article{rockafellar2017stochastic,
  title={Stochastic variational inequalities: single-stage to multistage},
  author={Rockafellar, R Tyrrell and Wets, Roger JB},
  journal={Math. Program.},
  volume={165},
  pages={331--360},
  year={2017},
  publisher={Springer}
}

@article{shapiro2002minimax,
  title={Minimax analysis of stochastic problems},
  author={Shapiro, Alexander and Kleywegt, Anton},
  journal={Optim. Methods Softw.},
  volume={17},
  pages={523--542},
  year={2002},
  publisher={Taylor \& Francis}
}

@article{lan2023novel,
  title={A Novel Catalyst Scheme for Stochastic Minimax Optimization},
  author={Lan, Guanghui and Li, Yan},
  journal={arXiv preprint arXiv:2311.02814},
  year={2023}
}

@article{chen2024near,
  title={Near-optimal algorithms for making the gradient small in stochastic minimax optimization},
  author={Chen, Lesi and Luo, Luo},
  journal={J. Mach. Learn. Res.},
  volume={25},
  pages={1--44},
  year={2024}
}

@article{xu2023unified,
  title={A unified single-loop alternating gradient projection algorithm for nonconvex-concave and convex-nonconcave minimax problems},
  author={Xu, Zi and Zhang, Huiling and Xu, Yang and Lan, Guanghui},
  journal={Math. Program.},
  volume={201},
  pages={635--706},
  year={2023},
  publisher={Springer}
}

@article{lei2020synchronous,
  title={On synchronous, asynchronous, and randomized best-response schemes for stochastic {N}ash games},
  author={Lei, Jinlong and Shanbhag, Uday V and Pang, Jong-Shi and Sen, Suvrajeet},
  journal={Math. Oper. Res.},
  volume={45},
  pages={157--190},
  year={2020},
  publisher={INFORMS}
}

@article{zhang2019two,
  title={Two-stage quadratic games under uncertainty and their solution by progressive hedging algorithms},
  author={Zhang, Min and Sun, Jie and Xu, Honglei},
  journal={SIAM J. Optim.},
  volume={29},
  pages={1799--1818},
  year={2019},
  publisher={SIAM}
}

@article{pang2017two,
  title={Two-stage non-cooperative games with risk-averse players},
  author={Pang, Jong-Shi and Sen, Suvrajeet and Shanbhag, Uday V},
  journal={Math. Program.},
  volume={165},
  pages={235--290},
  year={2017},
  publisher={Springer}
}

@article{beale1955minimizing,
  title={On minimizing a convex function subject to linear inequalities},
  author={Beale, Evelyn ML},
  journal={J. R. Stat. Soc. Ser. B. Stat. Methodol.},
  volume={17},
  pages={173--184},
  year={1955},
  publisher={Oxford University Press}
}

@article{dantzig1955linear,
  title={Linear programming under uncertainty},
  author={Dantzig, George B},
  journal={Manag. Sci.},
  volume={1},
  pages={197--206},
  year={1955},
  publisher={Informs}
}

@book{washburn2014two,
  title={Two-person Zero-sum Games},
  author={Washburn, Alan R},
  year={2014},
  publisher={Springer}
}

@book{clarke1990optimization,
  title={Optimization and Nonsmooth Analysis},
  author={Clarke, Frank H},
  year={1990},
  publisher={SIAM}
}

@article{gabriel1997smoothing,
  title={Smoothing of mixed complementarity problems},
  author={Gabriel, Steven A and Mor{\'e}, Jorge J},
  journal={Complementarity and Variational Problems: State of the Art},
  volume={92},
  pages={105--116},
  year={1997},
  publisher={SIAM Philadelphia}
}

@article{chen2006computation,
  title={Computation of error bounds for {P}-matrix linear complementarity problems},
  author={Chen, Xiaojun and Xiang, Shuhuang},
  journal={Math. Program.},
  volume={106},
  pages={513--525},
  year={2006},
  publisher={Springer}
}

@article{qi1993convergence,
  title={Convergence analysis of some algorithms for solving nonsmooth equations},
  author={Qi, Liqun},
  journal={Math. Oper. Res.},
  volume={18},
  pages={227--244},
  year={1993},
  publisher={INFORMS}
}

@article{pang1990newton,
  title={Newton's method for {B}-differentiable equations},
  author={Pang, Jong-Shi},
  journal={Math. Oper. Res.},
  volume={15},
  pages={311--341},
  year={1990},
  publisher={INFORMS}
}

@book{facchinei2003finite,
  title={Finite-dimensional Variational Inequalities and Complementarity Problems},
  author={Facchinei, Francisco and Pang, Jong-Shi},
  year={2003},
  publisher={Springer}
}

@article{bian2024nonsmooth,
  title={Nonsmooth convex--concave saddle point problems with cardinality penalties},
  author={Bian, Wei and Chen, Xiaojun},
  journal={Math. Program. publish online,},
  year={2024},
  publisher={Springer}
}

@article{jiang2023optimality,
  title={Optimality conditions for nonsmooth nonconvex-nonconcave min-max problems and generative adversarial networks},
  author={Jiang, Jie and Chen, Xiaojun},
  journal={SIAM J. Math. Data Sci.},
  volume={5},
  pages={693--722},
  year={2023},
  publisher={SIAM}
}

@article{shapiro2008stochastic,
  title={Stochastic mathematical programs with equilibrium constraints, modelling and sample average approximation},
  author={Shapiro, Alexander and Xu, Huifu},
  journal={Optimization},
  volume={57},
  pages={395--418},
  year={2008},
  publisher={Taylor \& Francis}
}

@book{dontchev2009implicit,
  title={Implicit Functions and Solution Mappings},
  author={Dontchev, Asen L and Rockafellar, R Tyrrell},
  volume={543},
  year={2009},
  publisher={Springer}
}

@article{izmailov2014strongly,
  title={Strongly regular nonsmooth generalized equations},
  author={Izmailov, Alexey F},
  journal={Math. Program.},
  volume={147},
  pages={581--590},
  year={2014},
  publisher={Springer}
}

@article{xu2010uniform,
  title={Uniform exponential convergence of sample average random functions under general sampling with applications in stochastic programming},
  author={Xu, Huifu},
  journal={J. Math. Anal.  Appl.},
  volume={368},
  pages={692--710},
  year={2010},
  publisher={Elsevier}
}

@inproceedings{jin2020local,
  title={What is local optimality in nonconvex-nonconcave minimax optimization?},
  author={Jin, Chi and Netrapalli, Praneeth and Jordan, Michael},
  booktitle={International Conference on Machine Learning},
  pages={4880--4889},
  year={2020},
  organization={PMLR}
}

@book{ioffe2017variational,
  title={Variational Analysis of Regular Mappings},
  author={Ioffe, Alexander D},
  journal={Springer Monographs in Mathematics. Springer, Cham},
  year={2017},
  publisher={Springer}
}

@article{cohen2025alternating,
  title={Alternating and parallel proximal gradient methods for nonsmooth, nonconvex minimax: a unified convergence analysis},
  author={Cohen, Eyal and Teboulle, Marc},
  journal={Math. Oper. Res.},
  volume={50},
  pages={141--168},
  year={2025},
  publisher={INFORMS}
}

@article{wachsmuth2013licq,
  title={On {LICQ} and the uniqueness of {L}agrange multipliers},
  author={Wachsmuth, Gerd},
  journal={Oper. Res. Lett.},
  volume={41},
  pages={78--80},
  year={2013},
  publisher={Elsevier}
}

@article{Sion58,
  title={On general minimax theorems},
  author={Sion, M.},
  journal={Pacific J. Math.},
  volume={8},
  pages={171--176},
  year={1958},
  publisher={}
}

@book{bonnans2013perturbation,
  title={Perturbation Analysis of Optimization Problems},
  author={Bonnans, J Fr{\'e}d{\'e}ric and Shapiro, Alexander},
  year={2013},
  publisher={Springer Science \& Business Media}
}

@article{TeboulleM18,
  title={A simplified view of first order methods for optimization},
  author={Teboulle, M},
  journal={Math. Program.},
  volume={170},
  pages={67--96},
  year={2018},
  publisher={Springer}
}

@article{rockafellar2019solving,
  title={Solving monotone stochastic variational inequalities and complementarity problems by progressive hedging},
  author={Rockafellar, R Tyrrell and Sun, Jie},
  journal={Math. Program.},
  volume={174},
  pages={453--471},
  year={2019},
  publisher={Springer}
}

@article{chen2019convergence,
  title={Convergence analysis of sample average approximation of two-stage stochastic generalized equations},
  author={Chen, Xiaojun and Shapiro, Alexander and Sun, Hailin},
  journal={SIAM J. Optim.},
  volume={29},
  pages={135--161},
  year={2019},
  publisher={SIAM}
}

@article{chen2019discrete,
  title={Discrete approximation of two-stage stochastic and distributionally robust linear complementarity problems},
  author={Chen, Xiaojun and Sun, Hailin and Xu, Huifu},
  journal={Math. Program.},
  volume={177},
  pages={255--289},
  year={2019},
  publisher={Springer}
}

@article{chen2022stochastic,
  title={Stochastic approximation methods for the two-stage stochastic linear complementarity problem},
  author={Chen, Lin and Liu, Yongchao and Yang, Xinmin and Zhang, Jin},
  journal={SIAM J. Optim.},
  volume={32},
  pages={2129--2155},
  year={2022},
  publisher={SIAM}
}
\end{document}